\documentclass[11pt,a4paper]{article}

\usepackage{amsmath,amssymb,amsthm,mathtools}
\usepackage{geometry}
\usepackage{enumitem}
\usepackage{booktabs,array,tabularx}
\usepackage{cite}
\usepackage[colorlinks=true,linkcolor=blue,citecolor=blue,urlcolor=blue]{hyperref}
\usepackage{url}

\hypersetup{
  pdftitle={Stein Structures Not Determined by the Contact Boundary},
  pdfauthor={Nobuo Iida},
  pdfsubject={Stein fillings, boundary contact structures, and Weinstein homotopy}
}
\usepackage{indentfirst}
\numberwithin{equation}{section}

\newtheorem{theorem}{Theorem}[section]
\newtheorem{proposition}[theorem]{Proposition}
\newtheorem{lemma}[theorem]{Lemma}
\newtheorem{corollary}[theorem]{Corollary}
\theoremstyle{definition}

\newtheorem{remark}[theorem]{Remark}
\newtheorem{question}[theorem]{Question}
\newtheorem{problem}[theorem]{Problem}
\newtheorem{example}[theorem]{Example}
\newtheorem*{unnumberedremark}{Remark}
\newtheorem*{unnumberedlemma}{Lemma}

\newcommand{\Z}{\mathbb{Z}}

\newcommand{\R}{\mathbb{R}}
\newcommand{\C}{\mathbb{C}}
\newcommand{\F}{\mathbb{F}_2}
\newcommand{\id}{\operatorname{id}}
\newcommand{\PD}{\operatorname{PD}}
\newcommand{\HM}{\widehat{HM}}
\newcommand{\ECH}{\operatorname{ECH}}
\newcommand{\SpinC}{\operatorname{Spin}^{c}}

\newcommand{\W}{\mathcal{W}}
\newcommand{\Int}{\operatorname{int}}
\newcommand{\norm}[1]{\lVert #1\rVert}
\newcommand{\Diff}{\operatorname{Diff}}

\title{Stein Structures Not Determined by the Contact Boundary}
\author{Nobuo Iida\thanks{Kavli Institute for the Physics and Mathematics of the Universe (Kavli IPMU), The University of Tokyo, 5-1-5 Kashiwanoha, Kashiwa, Chiba 277-8583, Japan. E-mail: \href{mailto:iidanobuo1224@g.ecc.u-tokyo.ac.jp}{\texttt{iidanobuo1224@g.ecc.u-tokyo.ac.jp}}; \href{mailto:iidanobuo1224@gmail.com}{\texttt{iidanobuo1224@gmail.com}}.}}
\date{}

\begin{document}

\maketitle

\begin{abstract}
We construct two Stein structures on the same compact smooth four-manifold
whose boundary contact forms are strictly identified and whose first Chern
classes agree, but whose exact symplectic forms are not symplectomorphic.
Moreover, no diffeomorphism makes the associated Weinstein structures
homotopic.

The examples are obtained by removing a tubular neighborhood of a smooth
bicanonical curve from a fake projective plane and comparing the induced Stein
structure with its conjugate. Their canonical $\SpinC$ structures differ by a
nonzero class of order two. After the boundary forms are normalized using a common Boothby--Wang connection form, a hypothetical symplectomorphism preserves 
the oriented circle fiber and extends over the divisor cap, contradicting
canonical-class rigidity. For the Weinstein statement, the required
fiber-preservation is obtained from a monopole Floer grading asymmetry, using
the Nelson--Weiler computation and Taubes's ECH--Seiberg--Witten
correspondence. This gives an affirmative solution to a normalized version of the relative
filling problem following Problem~4.96 in the $K3$ problem list.
\end{abstract}

\section{Introduction}

Stein manifolds lie at a meeting point of several complex variables, symplectic
topology, and smooth topology.  A strictly plurisubharmonic exhaustion equips a
Stein manifold with an exact symplectic structure and, after a standard Morse
perturbation, with a Weinstein structure.  In real dimension four, Stein
surfaces also admit concrete handlebody descriptions in terms of Legendrian
attachments; see \cite{GompfStein,CieliebakEliashberg}.  Their strictly
pseudoconvex boundaries carry canonical cooriented contact structures, so a
compact Stein surface is naturally a symplectic and Weinstein filling of a
contact $3$--manifold.

A basic rigidity question is how much of a Stein or Weinstein filling is
remembered by its contact boundary.  The question studied here asks for a
particularly subtle form of nonuniqueness: can two Stein structures live on
diffeomorphic $4$--manifolds, have contactomorphic cooriented boundaries and
matching first Chern classes, while remaining inequivalent as symplectic or
Weinstein structures even after arbitrary reparametrization?

This question is related to Problem~4.96 in the K3 problem list; see \cite[Problem~4.96]{K3ProblemList}.

\begin{question}[K3 Problem 4.96]
\label{ques:k3printed}
Are there symplectic $4$--manifolds $(X_1,\omega_1)$ and $(X_2,\omega_2)$,
together with a diffeomorphism $f:X_1\to X_2$, for which
\[
 f^*c_1(X_2,\omega_2)=c_1(X_1,\omega_1),
 \qquad
 f^*[\omega_2]=[\omega_1]\in H^2(X_1;\mathbb R),
\]
although the two symplectic manifolds are not symplectomorphic?
\end{question}

The adjective ``closed'' does not occur in the printed question.  Its position
in the discussion of symplectic structures on closed four-manifolds, followed
immediately by a separate relative question for fillings, makes the following
closed interpretation natural.  We distinguish the printed wording from that
interpretation.

\begin{question}[The closed interpretation of K3 Problem 4.96]
\label{ques:k3closed}
Does there exist a pair of closed symplectic $4$--manifolds
$(X_1,\omega_1)$ and $(X_2,\omega_2)$ and a diffeomorphism
$f:X_1\to X_2$ such that
\[
 f^*c_1(X_2,\omega_2)=c_1(X_1,\omega_1),
 \qquad
 f^*[\omega_2]=[\omega_1]\in H^2(X_1;\R),
\]
but $(X_1,\omega_1)$ and $(X_2,\omega_2)$ are not symplectomorphic?
\end{question}

We do not answer Question~\ref{ques:k3closed}.  If manifolds with boundary are
admitted in the printed wording, Theorem~\ref{thm:main} gives a
boundary-bearing example: its two forms are exact, the identity matches their
first Chern classes, and they are not symplectomorphic.  We record this only as
a convention-dependent observation, not as a solution of the closed
interpretation stated above.

For comparison, let $M$ be a closed oriented smooth manifold, let
$a\in H^2(M;\R)$, and set
\[
 \mathcal S_a
 :=\{\rho\in\Omega^2(M)\mid \rho\text{ is symplectic and }[\rho]=a\}.
\]
Salamon asks the following \cite[Question~2]{SalamonUniqueness}.

\begin{question}[Salamon's Question 2]
\label{ques:salamon}
Are any two symplectic forms in $\mathcal S_a$ diffeomorphic?
\end{question}

Here ``diffeomorphic'' means that for every
$\omega_0,\omega_1\in\mathcal S_a$ there is a diffeomorphism $g:M\to M$
with $g^*\omega_1=\omega_0$.

\begin{proposition}[Equivalence of the two closed questions]
\label{prop:k3-salamon}
Question~\ref{ques:k3closed} has an affirmative answer if and only if
Question~\ref{ques:salamon} has a negative answer for some closed smooth
four-manifold $M$ and some class $a\in H^2(M;\R)$ with
$\mathcal S_a\neq\varnothing$.
\end{proposition}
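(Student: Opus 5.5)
The plan is to prove the two implications directly, moving between a single manifold $M$ with two forms in $\mathcal S_a$ and a pair $(X_1,\omega_1),(X_2,\omega_2)$ joined by a diffeomorphism. The only subtlety is that Question~\ref{ques:k3closed} requires the first Chern classes to match, while Question~\ref{ques:salamon} says nothing about $c_1$. The argument rests on the observation that if a diffeomorphism $g$ satisfies $g^*\omega_1=\omega_0$, then automatically $g^*c_1(\omega_1)=c_1(\omega_0)$ and $g^*[\omega_1]=[\omega_0]$. So in one direction the Chern class condition costs nothing, and in the other it has to be arranged.

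\emph{($\Rightarrow$)} Suppose $f:X_1\to X_2$ and $\omega_1,\omega_2$ answer Question~\ref{ques:k3closed} affirmatively. Set $M:=X_1$, $\rho_0:=\omega_1$, $\rho_1:=f^*\omega_2$ and $a:=[\omega_1]$. Since $f^*[\omega_2]=[\omega_1]$, both $\rho_0$ and $\rho_1$ lie in $\mathcal S_a$, so $\mathcal S_a\neq\varnothing$. Suppose some $g\in\Diff(M)$ had $g^*\rho_1=\rho_0$. Then $f\circ g:(X_1,\omega_1)\to(X_2,\omega_2)$ would be a symplectomorphism, which is a contradiction. Hence Question~\ref{ques:salamon} fails for $(M,a)$. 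The Chern class hypothesis is not needed in this direction.

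\emph{($\Leftarrow$)} Suppose $\rho_0,\rho_1\in\mathcal S_a$ and no diffeomorphism pulls $\rho_1$ back to $\rho_0$. The obvious candidate is $X_1=X_2=M$ with $f=\id$. The classes $[\omega]$ agree, but $c_1(\rho_0)$ and $c_1(\rho_1)$ need not. This is the main obstacle, and I would handle it with the following case split.
\begin{enumerate}
\item If $c_1(\rho_0)=c_1(\rho_1)$, then $\id$ works.
\item Otherwise, I would look for a diffeomorphism $h$ with $h^*[\rho_1]=a$ and $h^*c_1(\rho_1)=c_1(\rho_0)$, and use $f=h$ with $\omega_1=\rho_0$ and $\omega_2=\rho_1$. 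Any symplectomorphism between these two would itself be a diffeomorphism pulling $\rho_1$ back to $\rho_0$, which is excluded.
\end{enumerate}

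The step I expect to need real work is the case where no such $h$ exists. One remedy is to replace $M$. For instance, take $M\#\overline{\C P^2}$, or a product-type stabilization, in which a reflection diffeomorphism lets us adjust $c_1$ while fixing $a$. That route requires showing that non-diffeomorphism of the forms survives the stabilization, which is delicate.

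Alternatively, the statement can be read with the freedom that $f$ need only match classes, not be the identity. The cleaner argument is then to note that Salamon's negative answer produces $\rho_0,\rho_1$ in one class $a$, and ask whether some diffeomorphism identifies the two pairs $(a,c_1)$. If the intended equivalence restricts Salamon's question to forms with equal $c_1$, the proof is immediate. So I would first check whether the paper intends $\mathcal S_a$ to carry a fixed-$c_1$ convention, and otherwise carry out the stabilization argument.
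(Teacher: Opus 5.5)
Your ($\Rightarrow$) direction is the paper's argument. The gap is in ($\Leftarrow$): you never dispose of the case $c_1(\rho_0)\neq c_1(\rho_1)$. Your case~2 only helps when a suitable $h$ happens to exist. The stabilization route would require proving that the stabilized forms remain non-diffeomorphic, which is at least as hard as the original problem, and you offer no tool for it. The paper also imposes no fixed-$c_1$ convention on $\mathcal S_a$. So as written, the reverse implication is not proved.

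The missing idea is that the problematic case never occurs. On a closed four-manifold, cohomologous symplectic forms have the same first Chern class, and indeed equivalent canonical $\SpinC$ structures. This is the Seiberg--Witten input (Taubes, and Li--Liu when $b^+=1$) that the paper quotes as Salamon's Corollary~A.

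It applies here because $\rho_0$ and $\rho_1$ induce the same orientation. On each component, $\int\rho_0^2=\langle a\cup a,[M]\rangle=\int\rho_1^2\neq 0$, and both integrands are nowhere-vanishing top forms, so they have the same sign.

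With this, $X_1=X_2=M$ and $f=\id$ satisfy every hypothesis of Question~\ref{ques:k3closed}. A symplectomorphism $(M,\rho_0)\to(M,\rho_1)$ would be precisely a diffeomorphism $g$ with $g^*\rho_1=\rho_0$, which is excluded. This is also why the paper remarks immediately after the proof that the first-Chern-class condition in Question~\ref{ques:k3closed} is redundant.
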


\begin{proof}
Suppose first that Question~\ref{ques:k3closed} has an affirmative answer.
On $M=X_1$ put $\widetilde\omega_2=f^*\omega_2$.  Then
$[\widetilde\omega_2]=[\omega_1]$.  If the two forms were diffeomorphic, there
would be a diffeomorphism $g:X_1\to X_1$ satisfying
$g^*\widetilde\omega_2=\omega_1$.  The composition $f\circ g$ would then be a
symplectomorphism from $(X_1,\omega_1)$ to $(X_2,\omega_2)$, a contradiction.
Thus Question~\ref{ques:salamon} has a negative answer for $a=[\omega_1]$.

Conversely, suppose that $\omega_0,\omega_1\in\mathcal S_a$ are not related by
any diffeomorphism of $M$.  Take $X_1=X_2=M$ and use the identity as the
comparison diffeomorphism.  Salamon's Corollary~A implies that two
cohomologous symplectic forms on a closed smooth four-manifold determine
equivalent canonical $\SpinC$ structures and hence the same first Chern class
\cite[Corollary~A]{SalamonUniqueness}.  Therefore all the hypotheses of
Question~\ref{ques:k3closed} hold, while the two symplectic manifolds are not
symplectomorphic.
\end{proof}

Thus the first-Chern-class condition in Question~\ref{ques:k3closed} is redundant:
a counterexample is precisely a closed smooth four-manifold $M$, a class
$a\in H^2(M;\R)$, and forms $\omega_0,\omega_1\in\mathcal S_a$ such that
\[
 \omega_0\neq g^*\omega_1
 \qquad\text{for every }g\in\Diff(M).
\]
To the author's knowledge, as of August~2026, no such closed
four-dimensional example is known.
Here one must distinguish path-connectedness of the fixed-class space
$\mathcal S_a$ from membership in a single $\Diff^+(M)$-orbit.  Lin--Wu
construct infinitely many pairwise non-isotopic, cohomologous symplectic forms
on ruled surfaces, but their forms are mutually diffeomorphic
\cite{LinWuComponents}.  Li--Ning prove partial uniqueness results after
restricting to K\"ahler-type forms and emphasize that the full quotient by
diffeomorphisms remains poorly understood
\cite[Sections~1.2--1.4]{LiNingSpaces}.

The problem list then says, ``There is also a relative version of this problem
for fillings'' \cite[Problem~4.96]{K3ProblemList}, and asks:

\begin{question}[Relative question following Problem~4.96]
\label{ques:k3relative}
Does there exist a pair of Stein manifolds filling the same contact
3-manifold that are diffeomorphic where the diffeomorphism takes the first
Chern class of one to that of the other, but such that the Stein manifolds are
not symplectomorphic?
\end{question}

One could also ask whether there is such a pair that is not Weinstein
homotopic; see also Problem~5.17.  In this paper, ``filling the same contact
$3$--manifold'' means that the two cooriented boundary contact manifolds are
contactomorphic by some contactomorphism that is not specified in advance.

Taken literally as a question about selected exact K\"ahler forms on compact
Stein domains, the symplectic clause has an elementary answer.

\begin{example}[The standard ball with two scales]\label{ex:scaling}
Let $B^4\subset\C^2$ carry the standard complex structure and put
\[
 \rho_a=a\bigl(|z|^2-1\bigr),\qquad
 \lambda_a=d^c\rho_a,\qquad
 \omega_a=d\lambda_a,
 \qquad a>0.
\]
For every $a$, the identity identifies the boundary contact structure with
$(S^3,\xi_{\mathrm{st}})$ and matches the first Chern classes, since
$c_1(TB^4)=0$.  Moreover,
\[
 \lambda_a=a\lambda_1,
 \qquad
 \omega_a=a\omega_1,
 \qquad
 \int_{B^4}\omega_a^2=a^2\int_{B^4}\omega_1^2.
\]
Thus $(B^4,\omega_a)$ and $(B^4,\omega_1)$ are not symplectomorphic for
$a\ne1$.  The path
\[
 \rho_t=((1-t)+ta)(|z|^2-1),\qquad 0\le t\le1,
\]
is nevertheless a Weinstein homotopy, and the two completions are
symplectomorphic by radial dilation.  This example records only the scale of a
chosen compact exact form.
\end{example}

The substantive relative question is the following normalized version:

\begin{problem}[Main problem]
\label{prob:substantive}
Determine whether the following data exist: compact connected Stein surfaces
$(X_i,J_i,\rho_i)$, a diffeomorphism $f:X_1\to X_2$, and a boundary
diffeomorphism $\tau:\partial X_1\to\partial X_2$.  With
\[
 \lambda_i=d^c_{J_i}\rho_i,\qquad \omega_i=d\lambda_i,
\]
one has
\[
 f^*c_1(TX_2,J_2)=c_1(TX_1,J_1),
 \qquad
 \tau^*(\lambda_2|_{\partial X_2})=\lambda_1|_{\partial X_1},
\]
but $(X_1,\omega_1)$ and $(X_2,\omega_2)$ are not symplectomorphic.  Can one
moreover require that no diffeomorphism makes the associated Weinstein
structures Weinstein homotopic?  (No compatibility between $\tau$ and
$f|_{\partial X_1}$ is imposed.)
\end{problem}

The equality of boundary forms is stronger than merely identifying their
cooriented kernels as contact plane fields.  It is imposed to eliminate the
scaling phenomenon in Example~\ref{ex:scaling}: by Stokes' theorem it forces
equality of the exact symplectic volumes. Once a boundary contactomorphism is chosen, Lemma~\ref{lem:definingrescale}
modifies the strictly plurisubharmonic defining functions so that the selected
boundary forms agree strictly.  The map $\tau$ is not required to be the
boundary restriction of $f$; indeed, in our example no boundary
contactomorphism is isotopic to the boundary restriction of a diffeomorphism of
the filling; see Remark~\ref{rem:marking-not-extend}.

\begin{theorem}[Main theorem]\label{thm:main}
There exist a compact connected smooth four-manifold $X$, two Stein structures
$J_+$ and $J_-$ on $X$, strictly plurisubharmonic defining functions
$\rho_+$ and $\rho_-$, and a diffeomorphism $\tau:Y\to Y$, where
$Y=\partial X$ is the common underlying boundary.  Writing
\[
 \lambda_\pm=d^c_{J_\pm}\rho_\pm,
 \qquad
 \omega_\pm=d\lambda_\pm,
 \qquad
 \xi_\pm=\ker(\lambda_\pm|_Y),
\]
one has:
\begin{enumerate}[label=\textup{(\roman*)}]
\item $\tau$ is a strict coorientation-preserving contactomorphism:
\[
 \tau^*(\lambda_-|_Y)=\lambda_+|_Y;
\]
\item the first Chern classes agree:
\[
 c_1(TX,J_+)=c_1(TX,J_-)
 \quad\text{in }H^2(X;\Z);
\]
\item the canonical $\SpinC$ structures are nevertheless distinct; more
precisely, if $k_X=K|_X$, then $k_X$ has order two and
\[
 \mathfrak{s}_{J_-}=\mathfrak{s}_{J_+}+k_X
 \neq\mathfrak{s}_{J_+};
\]
\item the compact symplectic manifolds with boundary are not symplectomorphic:
\[
 (X,\omega_+)\not\cong_{\mathrm{symp}}(X,\omega_-);
\]
\item for every diffeomorphism $F:X\to X$, the Weinstein structure associated
with $(J_+,\rho_+)$ is not Weinstein homotopic to the pullback by $F$ of the
Weinstein structure associated with $(J_-,\rho_-)$.
\end{enumerate}
\end{theorem}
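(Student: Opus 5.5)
The plan is to take a fake projective plane $P$ with its complex structure $J$ and canonical class $K$ (ample, with $K^2=9$), choose a smooth curve $C\in|2K|$, and set $X=P\setminus\nu(C)$ and $Y=\partial X$. Since $C$ is an ample divisor, $X$ is a Stein domain. For a Hermitian metric $h$ on $\mathcal O(C)$ with positive curvature and a defining section $s$, we put $\rho_+=-\log|s|_h^2$, cut off to a defining function of $X$, and $J_+=J$. For the second structure we take $J_-=-J$, which is again integrable and induces the same orientation in complex dimension two, together with the same $\rho_-=\rho_+$. Then $d^c_{J_-}=-d^c_{J_+}$, so $\lambda_-=-\lambda_+$ and $\omega_-=-\omega_+$, and $\rho_-$ is still strictly $J_-$-plurisubharmonic. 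The boundary $Y$ is the circle bundle of Euler number $C^2=36$ over $C$, and $\lambda_+|_Y$ is a Boothby--Wang connection form whose Reeb flow is the fiber rotation.

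For (i) we need $\tau$ with $\tau^*(-\lambda_+|_Y)=\lambda_+|_Y$.
\begin{itemize}[leftmargin=*]
\item First choose an orientation-reversing diffeomorphism $g$ of $C$ with $g^*\sigma=-\sigma$, where $\sigma$ is the curvature form; this is possible by Moser's argument, since $g^*[\sigma]=-[\sigma]$ for any orientation-reversing $g$.
\item Lift $g$ to a fiber-reversing bundle map $\tilde g$ of $Y$, which preserves the orientation of $Y$.
\item The form $\tilde g^*(-\lambda_+)$ is again a connection form with the same curvature, so it differs from $\lambda_+$ by the pullback of a closed $1$-form on $C$. A gauge transformation removes the exact part.
\item The remaining harmonic part is absorbed by Lemma~\ref{lem:definingrescale}, or by precomposing $\tilde g$ with a fiberwise rotation by a flat connection.
\end{itemize}

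For (ii) and (iii), note that $c_1(TX,J_\pm)=\mp K|_X$. The class $2K=\PD[C]$ restricts to zero on $X$, so $k_X=K|_X$ satisfies $2k_X=0$, which gives (ii). Conjugating the complex structure conjugates the canonical $\SpinC$ structure, and this gives $\mathfrak s_{J_-}=\mathfrak s_{J_+}+k_X$ up to the sign convention, which is immaterial since $k_X$ has order two. To see that $k_X\neq0$, use the exact sequence: the kernel of $H^2(P)\to H^2(X)$ is the image of $H^2(P,X)\cong H^0(C)$, namely $\Z\cdot 2K$. Since $K$ has infinite order, $K\notin\Z\cdot2K$.

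For (iv), suppose $\varphi:(X,\omega_+)\to(X,\omega_-)$ is a symplectomorphism.
\begin{itemize}[leftmargin=*]
\item Because $\omega_\pm|_Y=\pm d\lambda_+|_Y$, the characteristic line field $\ker(\omega|_Y)$ is the Reeb direction, that is, the circle fibers. Hence $\varphi|_Y$ maps fibers to fibers, with orientations fixed by the boundary orientation and the sign of $\omega$.
\item Capping $X$ with the standard symplectic disk bundle over $C$ recovers $(P,\omega_P)$ on the $+$ side and $(P,-\omega_P)$ on the $-$ side, the latter being the Kähler form of the conjugate plane.
\item The fiber-preserving boundary map lets us extend $\varphi$ over the caps by a fibered symplectomorphism of disk bundles. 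Here we must first match the fiber periods and the induced map on $C$, using the normal form of a neighborhood of a symplectic surface.
\item The result is a symplectomorphism $\Phi$ of closed manifolds carrying $C$ to $C$ (up to orientation) and hence $[C]=2K$ to the corresponding class. By canonical-class rigidity for minimal symplectic manifolds of general type (Taubes' $SW\Rightarrow$ canonical class, with $b^+=1$ and the chamber fixed by $[\omega]$), $\Phi^*$ takes the symplectic canonical class of the target to $K$.
\item Restricting to $X$ then gives $\varphi^*\mathfrak s_{J_-}=\mathfrak s_{J_+}$ with the induced action on $\operatorname{Tors}H^2(X)$. Comparing with (iii) and with the fact that $\Phi^*$ is determined on torsion by its action on $K$ yields the contradiction. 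This torsion bookkeeping must be checked carefully.
\end{itemize}

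For (v), a Weinstein homotopy after pulling back by $F$ only gives that $F|_Y$ carries $\xi_+$ to a contact structure isotopic to $\xi_-$. There is no longer a characteristic foliation forcing fiber preservation, so I expect this to be the main obstacle. The plan is to show instead that $F|_Y$ must preserve the oriented fiber class, at least in homology and in $\SpinC$ data. We would use the monopole Floer contact invariant in $\HM(-Y)$, identified with the ECH contact class via Taubes's correspondence, together with the Nelson--Weiler computation of ECH for prequantization bundles. Their computation makes the relative gradings asymmetric under any map reversing the fiber class. Once fiber preservation is established up to isotopy, the Weinstein homotopy can be deformed to a symplectomorphism near the boundary that preserves fibers. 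We then cap off as in (iv) and reach the same canonical-class contradiction.
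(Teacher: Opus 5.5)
Your treatment of (i)--(iii) is sound and close to the paper's. One caveat: $d^c_J\Phi|_Y$ is only asymptotically a Boothby--Wang form. So before you read off the characteristic foliation, you must normalize via Gray stability and Lemma~\ref{lem:definingrescale}, as the paper does with $\alpha_\pm=\kappa_\pm^*\eta$.

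\textbf{Gap in (iv).} Nothing you invoke in the last step of (iv) produces a contradiction.
\begin{itemize}
\item A symplectomorphism $\Phi:(S,\omega_P)\to(S,-\omega_P)$ satisfies $\Phi^*(-K)=K$ by naturality alone. Your relation $\Phi_*[C]_+=[C]_-=-[C]_+$ is the same statement, and Taubes's theorem adds nothing to it.
\item Seiberg--Witten theory cannot exclude a self-map acting by $-1$ on $H^2(S;\R)$: charge conjugation matches $\mathfrak s$ in one chamber with $\overline{\mathfrak s}$ in the other.
\item The torsion bookkeeping is also consistent. From $\mathfrak s_{-J}=\mathfrak s_J+K$ and $\Phi^*K=-K$ you get $\Phi^*\mathfrak s_J=\mathfrak s_J+K$, and both sides have $c_1=K$. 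On $X$ this reads $\varphi^*\mathfrak s_{J_+}=\mathfrak s_{J_-}$, which does not contradict (iii) because $\varphi\neq\id$.
\end{itemize}
In fact your steps would apply verbatim to a surface defined over $\R$ with real $s$, $h$, $D$. There complex conjugation $c$ satisfies $c^*J=-J$ and $c^*\rho_0=\rho_0$. So $c$ is an honest symplectomorphism $(X,\omega_+)\to(X,\omega_-)$, even though (iii) still holds.

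The missing idea is the rigidity specific to fake projective planes, Proposition~\ref{prop:KKsurface}(iii). Strong rigidity of ball quotients, together with the Kharlamov--Kulikov exclusion of anti-holomorphic diffeomorphisms, shows that every self-homeomorphism $G$ of $S$ satisfies $G^*K=K$. Combined with $\Phi^*(-2K)=2K$, this gives $4K=0$, contradicting $K^2=9$. Because the rigidity holds for homeomorphisms, you do not need the delicate symplectic extension over the caps. A topological cone extension suffices, which is what the paper does.

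\textbf{Gaps in (v).} Part (v) inherits the same gap at its final step, and it has a second one.
\begin{itemize}
\item The Floer input only shows that the Gray-modified boundary map $h$ sends $[t_+]$ to $[t_-]$. The paper uses the relative-grading asymmetry between $\HM^{-*}(Y,\mathfrak s_\xi\pm\PD(\mathfrak f))$, Proposition~\ref{prop:HMgradingasymmetry}, rather than the contact class, which lives in $\mathfrak s_\xi$ and is not what distinguishes the fiber orientations.
\item To cone $h$ off over $\nu(D)$, you must know that $h$ is isotopic to a fiber-preserving homeomorphism. The paper obtains this from Waldhausen's theorem together with the Chen--Tshishiku description of $\mathcal{OUT}_{\mathrm{bf}}$ (Proposition~\ref{prop:capextension}). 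It then glues using the collar adjustment of Lemma~\ref{lem:collar-adjustment}.
\item Your alternative, deforming the Weinstein homotopy into a symplectomorphism near the boundary, is not available: a Weinstein homotopy after pullback by $F$ gives no symplectomorphism. It is also unnecessary once you work with homeomorphisms.
\end{itemize}
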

The strict equality in part~\textup{(i)} is obtained by changing the strictly
plurisubharmonic defining functions while keeping $J_\pm$ fixed.  It is a
normalization of the selected contact forms, as opposed to merely an equality
of their kernels under a contactomorphism.  It implies
\[
 \int_X\omega_+^2=\int_X\omega_-^2,
\]
so the failure of symplectomorphism is not caused by scaling or a volume
mismatch.  Moreover, both $(Y,\xi_+)$ and $(Y,\xi_-)$ are contactomorphic to the
prequantization contact structure on the oriented circle bundle of Euler
number $-36$ over a closed oriented surface of genus $28$.

Here an ``associated Weinstein structure'' means a representative obtained
from the indicated Stein datum by a sufficiently small Morse perturbation
supported in the interior.  The strict boundary map in part~\textup{(i)} is
unmarked: it is not isotopic to the boundary restriction of any diffeomorphism
of $X$; see Remark~\ref{rem:marking-not-extend}.

\begin{corollary}[Solution of the Main problem]
\label{cor:k3}
The pair in Theorem~\ref{thm:main} satisfies the Main problem, including its
non-Weinstein-homotopic clause.
\end{corollary}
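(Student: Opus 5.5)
The plan is to deduce Corollary~\ref{cor:k3} directly from Theorem~\ref{thm:main} by matching each clause of the Main problem (Problem~\ref{prob:substantive}) to a part of the theorem. Take $X_1=X_2=X$, $(J_1,\rho_1)=(J_+,\rho_+)$ and $(J_2,\rho_2)=(J_-,\rho_-)$. Both are compact connected Stein surfaces on the same smooth manifold, so we may take $f=\id_X$. For the boundary diffeomorphism we take $\tau:\partial X_1=Y\to Y=\partial X_2$, the map supplied by the theorem. Since Problem~\ref{prob:substantive} imposes no compatibility between $\tau$ and $f|_{\partial X_1}$, we may choose these two maps independently. This matters, because by Remark~\ref{rem:marking-not-extend} the map $\tau$ is not isotopic to any boundary restriction of a diffeomorphism of $X$.

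Next we verify the hypotheses. The Chern class condition $f^*c_1(TX_2,J_2)=c_1(TX_1,J_1)$ becomes $c_1(TX,J_-)=c_1(TX,J_+)$, which is part~(ii). The boundary condition $\tau^*(\lambda_2|_{\partial X_2})=\lambda_1|_{\partial X_1}$ is literally part~(i), because $\lambda_i=d^c_{J_i}\rho_i$ matches the definition of $\lambda_\pm$. Part~(i) also says that $\tau$ preserves coorientation. It follows that both boundaries are the same cooriented contact manifold up to contactomorphism, so the pair of fillings also answers Question~\ref{ques:k3relative} in the sense fixed in the introduction.

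For the conclusions, non-symplectomorphism of $(X,\omega_+)$ and $(X,\omega_-)$ is part~(iv). This statement is invariant under the choice of $f$, since it quantifies over all diffeomorphisms. For the Weinstein clause, we read ``no diffeomorphism makes the associated Weinstein structures Weinstein homotopic'' as follows: for every diffeomorphism $F:X_1\to X_2$, the pullback $F^*\W_2$ is not Weinstein homotopic to $\W_1$. Because $X_1=X_2=X$, this is exactly part~(v). The remark after the theorem fixes ``associated Weinstein structure'' as a small interior Morse perturbation, and such perturbations of a fixed Stein datum are Weinstein homotopic to one another. So the statement does not depend on which representative is chosen, and it agrees with the notion used in the problem.

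There is no real obstacle, since the corollary only repackages the theorem. The one point that needs care is definitional: the diffeomorphism $f$ and the boundary map $\tau$ must be allowed to be independent, and the Weinstein clause must quantify over all diffeomorphisms. I would state both points explicitly so that the uncontrolled boundary marking does not look like a gap.
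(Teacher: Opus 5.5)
Your proposal is correct and is essentially the paper's argument: with $f=\id_X$ and the unmarked strict contactomorphism $\tau$, the hypotheses of the Main problem are parts~(i)--(ii) of Theorem~\ref{thm:main} and its conclusions are parts~(iv)--(v), and no compatibility between $\tau$ and $f|_{\partial X}$ is required. Your extra remark that different small Morse perturbations are Weinstein homotopic is valid but unnecessary, because Theorem~\ref{thm:nonweinstein} already covers every Weinstein structure inducing $\xi_\pm$.
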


Note that, by a result of Lisca and Mati\'c, one can see from
part~\textup{(iii)} that $\xi_+$ and $\xi_-$ are not isotopic as cooriented
contact structures.  We recall their theorem in the form used here.

\begin{theorem}[Lisca--Mati\'c]\label{thm:lisca-matic}
Let $W$ be a compact smooth four-manifold with boundary, and let $J_0,J_1$ be
Stein structures on $W$ with associated $\SpinC$ structures
$\mathfrak{s}_{J_0},\mathfrak{s}_{J_1}$.  If the induced cooriented contact
structures on $\partial W$ are isotopic, then
\[
 \mathfrak{s}_{J_0}\cong\mathfrak{s}_{J_1}.
\]
\end{theorem}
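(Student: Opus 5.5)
The statement is the Lisca--Matić theorem. It is quoted rather than proved here, but I would reconstruct its proof as follows. The plan has three steps: embed $W$ in a closed minimal Kähler surface of general type, reduce to the case where the two contact structures on $\partial W$ agree exactly, and then use Seiberg--Witten basic classes to compare the two canonical $\SpinC$ structures.

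\textbf{Embedding.} By the Lisca--Matić embedding theorem, a compact Stein domain $(W,J_0)$ embeds holomorphically into a minimal complex surface $Z$ of general type with $b_2^+(Z)>1$. Concretely, I would do the following.
\begin{enumerate}[label=\textup{(\alph*)}]
\item Realize $W$ by Gompf's handle presentation, or approximate it by a domain in an affine algebraic surface.
\item Compactify to a projective surface.
\item Take branched covers to make the complement of $W$ large enough that the closed surface is minimal of general type.
\end{enumerate}
This yields a Kähler surface $Z_0\supset W$ whose Kähler form restricts to $d\lambda_0$ near $\partial W$, up to symplectic collar modifications.

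\textbf{Reduction to equal contact structures.} Since the contact structures $\xi_0,\xi_1$ are isotopic, Gray stability gives an ambient isotopy $\phi_t$ of $\partial W$ with $\phi_{1*}\xi_0=\xi_1$. Extend $\phi_1$ to a diffeomorphism of $W$ supported in a collar. Pulling back $J_1$ by it does not change the isomorphism class of $\mathfrak{s}_{J_1}$, because the extension is isotopic to the identity. So I may assume $\xi_0=\xi_1$ on the nose.

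\textbf{Gluing and comparing basic classes.} Let $V=Z_0\setminus\Int W$, with its symplectic structure that is concave at $\partial W$. After rescaling and adding symplectization collars, the contact forms match, so $V$ glues symplectically to $(W,J_1)$, giving a closed symplectic manifold $Z_1$. The manifolds $Z_0$ and $Z_1$ are the same smooth manifold. Their canonical classes satisfy
\[
K_{Z_i}|_V=K_V,\qquad K_{Z_i}|_W=-c_1(\mathfrak{s}_{J_i}),
\]
where $K_V$ is the same on both because it is fixed by the common structure on $V$ and the common plane field along $\partial W$. Taubes' theorem gives $SW_{Z_1}(\mathfrak{s}_{\omega_1})=\pm1$. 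Since $Z_0$ is minimal of general type, its only Seiberg--Witten basic classes are $\pm K_{Z_0}$. Therefore $\mathfrak{s}_{\omega_1}$ is $\mathfrak{s}_{Z_0}$ or its conjugate.

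To exclude the conjugate, I would use that it pairs negatively with $[\omega]$ on a curve in $V$, since $K_{Z_0}\cdot[\omega]>0$ for general type. Equality of the $\SpinC$ structures on $Z$ then restricts to $\mathfrak{s}_{J_0}\cong\mathfrak{s}_{J_1}$ on $W$. This works at the level of $\SpinC$ structures, not just Chern classes, so $2$-torsion is detected.

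\textbf{Main obstacle.} The hardest part is the embedding step: the closed cap must make $Z_0$ minimal of general type with $b_2^+>1$, and it must be compatible with both Stein structures at once. Lisca--Matić handle this by choosing $V$ from the first structure only and then using the equality of the contact structures for the gluing, which is the route I would follow.
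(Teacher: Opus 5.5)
Your outline is the Lisca--Mati\'c argument that the paper invokes only by citation: embed $(W,J_0)$ holomorphically into a minimal surface of general type with $b_2^+>1$, reglue $(W,J_1)$ along the common contact boundary, and apply Taubes's theorem together with Witten's computation at the level of $\SpinC$ structures, so it follows the route the paper indicates. The conjugate case is cleanest to exclude as follows: both canonical $\SpinC$ structures agree on $V$, so the conjugate would force $K_{Z_0}|_V=0$, which fails because $V$ contains a compact holomorphic curve $C$ (e.g.\ lying over the divisor at infinity) with $K_{Z_0}\cdot C>0$; the inequality $K_{Z_0}\cdot[\omega]>0$ alone does not give this.
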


This is \cite[Theorem~1.2]{LiscaMatic}; its proof uses an embedding method and four-dimensional Seiberg--Witten theory.

\begin{corollary}[Boundary nonisotopy from Lisca--Mati\'c]
\label{cor:boundary-nonisotopy}
On the common underlying boundary $Y=\partial X$, the contact structures
$\xi_+$ and $\xi_-$ in Theorem~\ref{thm:main} are not isotopic as
cooriented contact structures.
\end{corollary}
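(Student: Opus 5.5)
This follows from Theorem~\ref{thm:lisca-matic} applied with $W=X$, $J_0=J_+$, $J_1=J_-$, together with part~(iii) of Theorem~\ref{thm:main}. The argument is by contradiction.

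Suppose that $\xi_+$ and $\xi_-$ were isotopic as cooriented contact structures on $Y=\partial X$. First we check that the hypotheses of Theorem~\ref{thm:lisca-matic} hold. Both $J_\pm$ are Stein structures on the same compact manifold $X$. Their induced cooriented contact structures on $\partial X$ are exactly $\xi_\pm=\ker(\lambda_\pm|_Y)$. To see this, note that $\lambda_\pm=d^c_{J_\pm}\rho_\pm$ restricts on the regular level set $Y=\rho_\pm^{-1}(0)$ to a form whose kernel is the field of $J_\pm$-complex tangencies. Its coorientation is the one determined by $d\rho_\pm$, which is the convention used by Lisca--Mati\'c. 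So the hypotheses hold, and Theorem~\ref{thm:lisca-matic} gives $\mathfrak s_{J_+}\cong\mathfrak s_{J_-}$.

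We now compare this with part~(iii). There, $\mathfrak s_{J_-}=\mathfrak s_{J_+}+k_X$ with $k_X\neq0$. Since $H^2(X;\Z)$ acts freely on $\SpinC(X)$, this means $\mathfrak s_{J_-}\neq\mathfrak s_{J_+}$, which is a contradiction. One point needs care. The isomorphism ``$\cong$'' in Theorem~\ref{thm:lisca-matic} must be read as equality of isomorphism classes of $\SpinC$ structures on the fixed manifold $X$, with no diffeomorphism of $X$ allowed. This is the sense in which Lisca--Mati\'c state their result: the $\SpinC$ structures are compared as elements of the $H^2(X;\Z)$-torsor $\SpinC(X)$, and Seiberg--Witten invariants of a closed manifold containing $X$ separate them. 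Part~(iii) is an inequality in that same torsor, so the two statements are directly comparable.

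The main subtlety is that the first Chern classes agree by part~(ii). Isotopy of the contact structures therefore cannot be ruled out through $c_1$, and the torsion class $k_X$ of order two has to be detected. This is exactly the content of Lisca--Mati\'c: their statement concerns the full $\SpinC$ structure, not only its Chern class. The remaining check is that no orientation or coorientation convention is reversed, since a reversal could conjugate a $\SpinC$ structure. Here both Stein structures induce the same orientation on $X$, namely the complex orientation, because the comparison takes place on the fixed oriented manifold $X$. Both boundary coorientations come from the outward direction of $d\rho_\pm$. So the conventions are consistent, and the corollary follows.
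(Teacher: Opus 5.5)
Your proposal is correct and is the paper's argument: if the cooriented contact structures were isotopic, Theorem~\ref{thm:lisca-matic} applied to $(X,J_+,J_-)$ would give $\mathfrak s_{J_+}\cong\mathfrak s_{J_-}$, contradicting part~(iii) of Theorem~\ref{thm:main}. Two side remarks need correcting, though neither affects the argument. First, $J_-=-J_+$ induces the same orientation as $J_+$ because the complex dimension is two, not because the comparison is made on a fixed oriented manifold. Second, the coorientation of $\xi_\pm$ is given by $J_\pm$ applied to the outward normal, not by $d\rho_\pm$ alone, so $\xi_+$ and $\xi_-$ are the same plane field with opposite coorientations; this is harmless, since Lisca--Mati\'c's convention is applied to each $J_\pm$ separately.
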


\begin{proof}
If $\xi_+$ and $\xi_-$ were isotopic as cooriented contact structures,
Theorem~\ref{thm:lisca-matic} would imply
$\mathfrak{s}_{J_+}\cong\mathfrak{s}_{J_-}$, contradicting
part~\textup{(iii)} of Theorem~\ref{thm:main}.
\end{proof}

\subsection*{Other dimensions}

The Main problem has a simple negative answer in real dimension two and known
affirmative solutions in an infinite family of higher dimensions.

\begin{proposition}[Real dimension two]
\label{prop:dimension-two}
For compact connected Stein domains, the Main problem has no solution in real
dimension two.
\end{proposition}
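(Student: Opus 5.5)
We show that in real dimension two any data as in the Main problem already force $(X_1,\omega_1)$ and $(X_2,\omega_2)$ to be symplectomorphic, so the non-symplectomorphism clause can never hold. The reason is that in dimension two a symplectic form is just an area form, and Moser's theorem says area forms are classified up to diffeomorphism by total area.

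First we describe the domains. A compact connected Stein domain of real dimension two is a compact Riemann surface $X_i$ with nonempty boundary, and $\omega_i=d\lambda_i=dd^c\rho_i$ is a positive area form. The boundary $\partial X_i$ is a disjoint union of circles. On it the contact form $\lambda_i|_{\partial X_i}$ is a nowhere-vanishing $1$-form that is positive with respect to the boundary orientation, because $\rho_i$ is a defining function with $\partial X_i=\rho_i^{-1}(0)$ as a regular level and $\lambda_i(\nu^{\perp})$ has a sign fixed by the coorientation. In this dimension a "contact structure" is empty information, so the only boundary datum is the form $\lambda_i|_{\partial X_i}$ itself.

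Next we compare areas. By Stokes' theorem,
\[
\int_{X_i}\omega_i=\int_{\partial X_i}\lambda_i .
\]
The boundary diffeomorphism $\tau$ satisfies $\tau^*(\lambda_2|_{\partial X_2})=\lambda_1|_{\partial X_1}$. Both restrictions are positive on the oriented boundary, so $\tau$ must preserve the boundary orientations. Hence
\[
\int_{\partial X_1}\lambda_1=\int_{\partial X_2}\lambda_2,
\]
and the two total areas agree.

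Finally we apply Moser's theorem for surfaces with boundary. Let $f:X_1\to X_2$ be the given diffeomorphism. If $f$ reverses orientation, precompose it with an orientation-reversing diffeomorphism of $X_1$; one always exists, for instance a reflection of the standard model of a compact orientable surface with boundary. This gives an orientation-preserving diffeomorphism $g$, and $g^*\omega_2$ and $\omega_1$ are positive area forms on $X_1$ with equal total area. Moser's argument on compact surfaces with boundary supplies a diffeomorphism $\phi$ with $\phi^*g^*\omega_2=\omega_1$. The argument runs as follows.
\begin{itemize}
\item Write $g^*\omega_2-\omega_1=d\beta$. This is possible because $H^2(X_1;\R)=0$ when the boundary is nonempty.
\item Modify $\beta$ so that it vanishes on $\partial X_1$, which in turn makes the Moser vector field tangent to the boundary. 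Equal total area gives $\int_{\partial X_1}\beta=0$, but only summed over all boundary components. To kill $\beta$ componentwise, adjust it by closed $1$-forms dual to arcs joining the boundary circles. Concretely, use that the relative class $[d\beta]\in H^2(X_1,\partial X_1;\R)\cong\R$ is detected by total area alone. After that adjustment $\beta|_{\partial X_1}$ is exact, say $\beta|_{\partial X_1}=dh$; subtract $d\tilde h$ for an extension $\tilde h$ of $h$.
\item Integrate the time-dependent vector field $v_t$ defined by $\iota_{v_t}\omega_t=-\beta$, where $\omega_t$ is the linear interpolation. Since $v_t$ vanishes on the boundary, its flow exists for all $t\in[0,1]$ and gives $\phi$.
\end{itemize}
Then $g\circ\phi$ is a symplectomorphism, contradicting the requirement in the Main problem. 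The first Chern class condition plays no role here.

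The main obstacle is the relative Moser step: one has to check carefully that the primitive can be chosen to vanish on every boundary component. This is exactly where equality of total area, rather than equality of boundary lengths componentwise, has to suffice, and the relative-cohomology adjustment above is what makes it work.
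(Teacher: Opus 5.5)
Your proof is correct and follows the paper's route: Stokes' theorem together with the strict boundary equality gives equal total areas, and a Moser argument on the compact surface with boundary yields $\phi$ with $\phi^*g^*\omega_2=\omega_1$. The paper cites Banyaga's theorem for that Moser step, whereas you carry out the relative Moser argument by hand; one small correction is that making the pullback of $\beta$ to $\partial X_1$ vanish makes $v_t$ tangent to the boundary rather than zero on it, which is still all the flow needs.
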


\begin{proof}
Let $(X_i,J_i,\rho_i)$ be compact connected Stein curves satisfying the smooth
and boundary-form requirements of the Main problem, and put
$\lambda_i=d^c_{J_i}\rho_i$ and $\omega_i=d\lambda_i$.  The first-Chern-class
condition is automatic because $H^2(X_i;\mathbb Z)=0$.  Since
$\lambda_i|_{\partial X_i}$ is positive for the boundary orientation, the
strict boundary equality implies
\[
 \int_{X_1}\omega_1
 =\int_{\partial X_1}\lambda_1
 =\int_{\partial X_1}\tau^*\lambda_2
 =\int_{\partial X_2}\lambda_2
 =\int_{X_2}\omega_2.
\]
Choose an orientation-preserving diffeomorphism $g:X_1\to X_2$; such a map
exists by the classification of compact oriented surfaces with boundary.
Then $\omega_1$ and $g^*\omega_2$ are positive volume forms on $X_1$ with the
same total volume.  Banyaga's extension of Moser's theorem to manifolds with
boundary \cite{BanyagaVolumeBoundary} gives a diffeomorphism
$\phi:X_1\to X_1$ such that
\[
 \phi^*g^*\omega_2=\omega_1.
\]
Thus $g\circ\phi:(X_1,\omega_1)\to(X_2,\omega_2)$ is a symplectomorphism, so
the non-symplectomorphism requirement can never hold.
\end{proof}

The higher-dimensional Main problem is obtained from
Problem~\ref{prob:substantive} by allowing compact connected Stein domains of
arbitrary complex dimension and contact boundaries of the corresponding odd
dimension.

\begin{proposition}[Known higher-dimensional solutions]
\label{prop:higher-main}
For every $k\geq2$, the higher-dimensional Main problem has an affirmative
solution in real dimension $4k+2$.
\end{proposition}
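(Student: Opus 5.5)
The plan is to take the known higher-dimensional examples of exotic Stein structures on a fixed smooth manifold, in the style of McLean, Abouzaid--Seidel, and Eliashberg--Cieliebak, and adjust them to meet the normalizations of the Main problem. For $n=2k+1\ge5$, i.e.\ real dimension $4k+2$ with $k\geq2$, I would invoke the flexibility and $h$-principle results of Cieliebak--Eliashberg \cite{CieliebakEliashberg} on one side and the symplectic-homology rigidity results on the other.

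\textbf{Step 1.} Fix a Stein domain $X_1$ of complex dimension $n$ whose symplectic homology is nonzero, for instance a plumbing or a cotangent-bundle-type Weinstein domain built from subcritical handles together with critical handles. Let $X_2$ be the flexible Weinstein domain on the same smooth manifold with the same formal (almost complex) class. Cieliebak--Eliashberg guarantees that $X_2$ exists in dimension $2n\ge6$, is Stein (Weinstein-to-Stein), and satisfies $SH(X_2)=0$.

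\textbf{Step 2.} Match the boundaries strictly. Since the formal homotopy classes agree, the identity matches the first Chern classes. To make the boundary contact structures contactomorphic, I would use the known device of taking the boundary connected sum of each filling with a suitable common piece, or more directly the Lazarev-type results that flexible and non-flexible fillings can share the same contact boundary in these dimensions. The dimensional restriction $4k+2$ presumably enters here, through the specific families in the literature where the boundaries are identified, such as Brieskorn-type examples in dimension $4k+2$ with $\dim_\C=2k+1$ odd. Once a coorientation-preserving contactomorphism $\tau$ exists, I would apply the analogue of Lemma~\ref{lem:definingrescale}, which is not yet stated here but is referred to, to rescale $\rho_2$ near the boundary so that $\tau^*\lambda_2=\lambda_1$ strictly.

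\textbf{Step 3.} Distinguish the two structures. A symplectomorphism of compact Liouville domains whose boundary is of contact type induces an isomorphism of symplectic homology of the completions, because the Liouville vector field is determined near the boundary only up to the homotopy allowed by the invariance of $SH$. So $SH(X_1)\neq0=SH(X_2)$ rules out both symplectomorphism and Weinstein homotopy after any diffeomorphism.

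\textbf{Main obstacle.} I expect the hardest part to be Step 2: producing genuinely contactomorphic boundaries rather than merely diffeomorphic ones. For this I would cite the explicit constructions in the literature that yield it in dimension $4k+2$, rather than constructing them afresh.
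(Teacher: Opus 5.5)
\textbf{There is a genuine gap, and it sits in your Step 2, which carries the whole content of the proposition.} Your plan needs two fillings on the same manifold with contactomorphic boundaries: one with $SH(X_1)\neq0$, and one flexible filling $X_2$. Known results point the other way. Lazarev showed that contact manifolds admitting flexible fillings are asymptotically dynamically convex. Zhou showed that for such contact manifolds, the vanishing of symplectic cohomology does not depend on the choice of topologically simple exact filling (vanishing $c_1$, $\pi_1$-injective boundary, the latter automatic for Weinstein domains of dimension at least six). So in the graded setting $c_1=0$, which includes the cotangent bundles and plumbings you suggest, a flexible $X_2$ forces $SH(X_1)=0$, and the pair you want does not exist. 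This is why, for instance, $T^*S^n$ and its flexible counterpart have non-contactomorphic boundaries.

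Neither of your fallback devices repairs this. Taking boundary connected sums with a common piece does not make non-contactomorphic boundaries contactomorphic. Brieskorn/Ustilovsky-type families in these dimensions produce many \emph{non}-contactomorphic contact structures, which is the opposite of what you need. So Step 2 is not a matter of locating a citation. The invariant separating the fillings must be finer than (non)vanishing of $SH$, and the boundary identification must come from a construction in which it holds by design.

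The paper's route supplies exactly this. It uses Kartal's open symplectic mapping tori \cite{KartalMappingTori}: for every even $m\geq4$ they give diffeomorphic Weinstein domains of complex dimension $m+1$ with the same contact boundary but inequivalent wrapped Fukaya categories. This is also where the restriction to real dimension $2(m+1)=4k+2\geq10$ comes from, not from Brieskorn spheres. The paper then proceeds in four steps.
\begin{itemize}
\item Common subcritical handle attachments make both domains simply connected. The canonical bundles then have a unique trivialization up to homotopy, so both first Chern classes vanish and the Chern-class condition is met.
\item The Stein--Weinstein correspondence provides Stein representatives.
\item Lemma~\ref{lem:definingrescale} normalizes the boundary forms strictly. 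This is a Weinstein deformation, so it does not affect the wrapped category.
\item Simple connectivity also covers a point your Step 3 glosses over. The Main problem excludes \emph{all} symplectomorphisms, and an arbitrary symplectomorphism of compact exact domains respects $SH$ or the wrapped category only once it is exact. That holds here because the difference of primitives is a closed $1$-form on a space with $H^1(X;\R)=0$.
\end{itemize}
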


\begin{proof}[Proof sketch]
For every even complex dimension $m\geq4$, Kartal constructs diffeomorphic
Weinstein domains of complex dimension $m+1$ with the same contact boundary
and inequivalent wrapped Fukaya categories
\cite[Theorem~1.1, Examples~1.3--1.4, and Corollary~1.6]{KartalMappingTori}.
After the common subcritical handle attachments in his Corollary~1.6, the
domains are simply connected and their canonical bundles have a unique
homotopy class of trivialization; in particular their first Chern classes
vanish.  The Stein--Weinstein correspondence gives Stein representatives on
the two diffeomorphic domains.  The dimension-independent rescaling lemma,
Lemma~\ref{lem:definingrescale}, then changes the strictly plurisubharmonic
defining functions so that the selected boundary contact forms agree under a
chosen boundary contactomorphism.

These changes are Weinstein deformations and therefore do not remove the
wrapped-Fukaya-category obstruction.  Since the domains are simply connected,
any symplectomorphism between their exact symplectic forms would be exact: the
difference between the pulled-back and original primitives is a closed
one-form and hence exact.  Thus no symplectomorphism exists, and a Weinstein
homotopy after any diffeomorphic identification would likewise contradict the
wrapped Fukaya categories.  Since $m+1$ is odd and at least five, the resulting
real dimensions are precisely $4k+2\geq10$.
\end{proof}

Together with Theorem~\ref{thm:main}, these statements settle the Main problem
in real dimensions $2$, $4$, and $4k+2\geq10$.  The remaining real dimensions
$6$ and $4k$ with $k\geq2$ are recorded as questions in
Section~\ref{sec:further}.

\subsection*{Organization}

Section~\ref{sec:conventions} fixes some conventions.  Section~\ref{sec:algebraic} chooses the fake projective
plane and a smooth bicanonical curve.  Section~\ref{sec:stein} constructs the
conjugate initial Stein data and computes their first Chern and canonical
$\SpinC$ structures.  Section~\ref{sec:boundary} identifies the boundary
prequantization spaces and fixes all fiber-orientation signs.
Section~\ref{sec:normalization} constructs the final normalized defining
functions and realizes both selected boundary forms as pullbacks of one
Boothby--Wang connection form.  Section~\ref{sec:cap} first records the
topology of the underlying nontrivial circle bundle, then states the
Waldhausen and Chen--Tshishiku mapping-class inputs, proves the cap-extension
proposition, and proves part~\textup{(iv)} without Floer homology.
Section~\ref{sec:monopole} states the Nelson--Weiler--Taubes grading formula
and proves detection of the positive oriented fiber class.
Section~\ref{sec:weinstein} uses that Floer-theoretic detection to prove
part~\textup{(v)} and then assembles the proofs of the Main Theorem and its
corollary.  Finally, Section~\ref{sec:further} records further remarks, the
marked version of the problem, the question of a simply connected
four-dimensional filling, and the remaining real dimensions $6$ and $4k$
with $k\geq2$.

\section{Conventions and preliminaries}\label{sec:conventions}

All manifolds are oriented.  A contact structure is cooriented, and a contact
isotopy preserves the coorientation.  
Let $(X,J)$ be a compact
complex manifold with boundary. 
 Our convention is 
\[
 d_J^cf=-df\circ J
\]
 and hence
\begin{equation}\label{eq:ddcconvention}
d^c_J=\sqrt{-1}(\bar\partial-\partial), \quad dd^c_J=2\sqrt{-1}\,\partial\bar\partial.
\end{equation}
A smooth function $\rho: X\to \R$ is said to be strictly $J$-plurisubharmonic if  the
Levi form $dd_J^c\rho(v,Jv)$ is positive for every nonzero tangent vector
$v$.
A triple $(X,J, \rho)$  is  defined to be a compact Stein domain if $(X, J)$ is a compact
complex manifold with boundary together with a smooth strictly
$J$-plurisubharmonic defining function $\rho\leq0$ such that
\[
 \partial X=\rho^{-1}(0),\qquad d\rho|_{\partial X}\neq0.
\]
A Stein datum $(J,\rho)$ determines the Liouville form
$\lambda=d_J^c\rho$, the exact symplectic form $\omega=d\lambda$, and the
positive cooriented boundary contact structure
\[
 \xi=\ker\alpha, \quad \alpha=\lambda|_{\partial X}
\]
whose coorientation is represented by the contact form
$\alpha$. 
If $\phi$ is strictly $J$-plurisubharmonic, then
\[
 \lambda=d_J^c\phi,
 \qquad
 \omega=dd_J^c\phi,
 \qquad
 g(v,w)=\omega(v,Jw).
\]
The Liouville vector field is the $g$-gradient of $\phi$.  We orient the boundary
of a regular sublevel set by the outward-normal-first convention.  This is the
Liouville boundary orientation and satisfies
\[
 \alpha \wedge d\alpha >0.
\]
With these conventions, Stokes' theorem reads
\begin{equation}\label{eq:stokes-volume}
 \int_X(d\lambda)^2=\int_{\partial X}\lambda\wedge d\lambda.
\end{equation}

A symplectomorphism between two symplectic manifolds with boundary is a
diffeomorphism $F:(X_1,\omega_1)\to(X_2,\omega_2)$ satisfying
$F^*\omega_2=\omega_1$.  Unless explicitly stated otherwise, we impose no
boundary marking, exactness condition, or requirement that $F$ preserve a
chosen primitive.
\par
Next, let us define a Weinstein structure.
Let $X$ be a compact smooth manifold with boundary.
A smooth $1$-form $\lambda$ on $X$ is called a \emph{Liouville form} if
\[
d\lambda
\]
is a symplectic form.
The associated Liouville vector field $Z$ is uniquely determined by
\[
\iota_Z d\lambda=\lambda.
\]

A smooth function
\[
\varphi\colon X\to\mathbb{R}
\]
is called a generalized Morse function if all its critical points are
either Morse critical points or birth--death points.
At a birth--death point, after a smooth change of coordinates and the addition
of a constant, $\varphi$ has the local form
\[
x_1^3\pm x_2^2\pm\cdots\pm x_n^2.
\]
In a generic one-parameter family of generalized Morse functions, the
parameters at which birth--death points occur are isolated.

Let $V$ be a smooth vector field on $X$.
We say that $V$ is gradient-like for $\varphi$ if there exist a
Riemannian metric $g$ on $X$ and a positive smooth function
\[
\delta\colon X\to(0,\infty)
\]
such that
\[
d\varphi(V)
\geq
\delta
\left(
| V|_g^2
+
|d\varphi|_{g^*}^2
\right)
\]
at every point of $X$, where $g^*$ denotes the dual metric on $T^*X$.
In particular, this condition implies
\[
\operatorname{Zero}(V)=\operatorname{Crit}(\varphi)
\]
and
\[
d\varphi(V)>0
\]
away from the critical points of $\varphi$.

A Weinstein structure on $X$ is a triple
\[
\mathcal{W}=(X,\lambda,\varphi)
\]
such that the Liouville vector field $Z$ associated with the Liouville form $\lambda$ is outward
transverse to $\partial X$, the function $\varphi$ is a generalized Morse
function having $\partial X$ as a regular maximal level set, and $Z$ is
gradient-like for $\varphi$.

If the strictly plurisubharmonic defining function of a Stein datum is Morse, then
$(X,d_J^c\rho,\rho)$ is Weinstein: with respect to the K\"ahler metric its
Liouville vector field is the gradient of $\rho$.
After an arbitrarily small strictly plurisubharmonic Morse perturbation
supported in the interior,\footnote{By a strictly plurisubharmonic Morse
perturbation of $\rho$ we mean a function
\[
\rho'=\rho+\eta,
\]
where $\eta$ is a sufficiently small smooth function supported in
$\operatorname{int}X$, such that $\rho'$ is Morse and remains strictly
plurisubharmonic.  Such perturbations exist because the Morse functions are
dense in the space of smooth functions, while strict plurisubharmonicity is an
open condition in the $C^2$ topology.  Since $\eta$ is supported in the
interior, $\rho'$ agrees with $\rho$ near $\partial X$ and hence has the same
regular boundary level set.}
every compact Stein domain therefore determines a Weinstein structure.

Throughout, a Weinstein homotopy on a compact domain means a smooth family
$(\lambda_t,\varphi_t)$ on one fixed smooth manifold $X$ such that
$d\lambda_t$ is symplectic, the Liouville vector field is outward transverse to
$\partial X$, and $\partial X$ remains a regular convex level of
$\varphi_t$ for every $t$.  We use the standard generalized-Morse convention,
allowing isolated birth--death times; an arbitrarily small parametric
perturbation supported in the interior makes the functions Morse away from
those times without changing the boundary germ.  A Stein homotopy is a smooth family $(J_t,\rho_t)$ of Stein data on
$X$ for which $0$ is a regular value of every $\rho_t$.
It induces a Liouville homotopy via $\lambda_t=d^c_{J_t}\rho_t$ and, after a
parametric interior Morse perturbation, a Weinstein homotopy.  Whenever a
Weinstein representative of a Stein datum is needed, we choose an arbitrary
sufficiently small Morse perturbation supported away from the boundary.  No
uniqueness assertion for that auxiliary choice is needed below, because
Theorem~\ref{thm:nonweinstein} is formulated for every Weinstein structure
inducing the prescribed boundary contact structure.  Two Weinstein structures
on diffeomorphic domains are Weinstein deformation equivalent if, after
pullback by a diffeomorphism, they are connected by such a homotopy; see
\cite[Chapters~11--12]{CieliebakEliashberg}.

For a complex surface $S$, we write $K_S$ for the canonical line bundle and,
when no confusion can arise, use the same symbol
\[
 K=c_1(K_S)\in H^2(S;\Z)
\]
for its first Chern class (and for the corresponding divisor class).  Thus
\[
 c_1(TS,J)=-K.
\]
\section{A fake projective plane and a bicanonical curve}
\label{sec:algebraic}
A fake projective plane is a smooth complex projective surface with the same
rational homology as $\mathbb{CP}^2$ but not biholomorphic to
$\mathbb{CP}^2$.  The first example was constructed
by Mumford in 1979 using $p$-adic uniformization.  The subsequent classification of
Prasad--Yeung and Cartwright--Steger is complete; in particular, there are
precisely 100 fake projective planes up to biholomorphism; see \cite{PrasadYeung}.

Every fake projective plane is minimal of general type and satisfies equality
in the Bogomolov--Miyaoka--Yau inequality.  More explicitly,
\[
 c_2(S)=3,
 \qquad
 K_S^2=9=3c_2(S).
\]
By the equality case of the Bogomolov--Miyaoka--Yau theorem, its universal
cover is biholomorphic to the complex two-ball
\[
 \mathbb B_{\mathbb C}^{2}
 =
 \left\{
 (z_1,z_2)\in\mathbb C^2
 \;\middle|\;
 |z_1|^2+|z_2|^2<1
 \right\}.
\]
Consequently every fake projective plane can be written as a compact complex
ball quotient
\[
 S=\mathbb B_{\mathbb C}^{2}/\Gamma,
\]
where
\[
 \Gamma<\mathrm{PU}(2,1)
 =
 U(2,1)/Z(U(2,1)),
 \qquad
 Z(U(2,1))
 =
 \{e^{\sqrt{-1}\theta}I\mid \theta\in\mathbb R\}
 \cong U(1),
\]
is a torsion-free cocompact lattice;\footnote{A lattice in a locally compact
group $G$ is a discrete subgroup $\Gamma<G$ such that $G/\Gamma$ has finite
invariant volume.  It is called \emph{cocompact} (or \emph{uniform}) when
$G/\Gamma$ is compact.  In the present situation, discreteness implies that
the action of $\Gamma$ on $\mathbb B_{\mathbb C}^2$ is properly
discontinuous.  If an element of $\Gamma$ fixes a point of
$\mathbb B_{\mathbb C}^2$, it belongs to a compact point stabilizer; since
$\Gamma$ is discrete, the intersection with such a stabilizer is finite.
Thus torsion-freeness implies that the action is free.}
see \cite[Section~1.1]{PrasadYeung}.  Notice that no particular arithmetic
presentation of $\Gamma$ will be needed below: we only use the geometric and
rigidity properties shared by all fake projective planes.

\begin{proposition}[Kharlamov--Kulikov]\label{prop:KKsurface}
Any fake projective plane $S$ has the following properties:
\begin{enumerate}[label=\textup{(\roman*)}]
\item $K_S$ is ample\footnote{Let $L$ be a holomorphic line bundle over a compact complex manifold $M$.  The line bundle $L$ is \emph{ample} if there exist an integer $m>0$ and sections $s_0,\ldots,s_N\in H^0(M,L^{\otimes m})$ with no common zero such that \[  x\longmapsto[s_0(x):\cdots:s_N(x)] \] is a holomorphic embedding of $M$ into $\mathbb{CP}^N$.  If $h$ is a Hermitian metric on $L$ and a local holomorphic frame $e$ satisfies $\lVert e\rVert_h^2=e^{-\psi}$, the real Chern curvature form in our convention is \[  \Omega_h  =\sqrt{-1}\,\partial\bar\partial\psi  =\frac12dd^c\psi. \] The metric $h$ has \emph{positive Chern curvature} if \[  \Omega_h(v,Jv)>0 \] for every nonzero real tangent vector $v$.  The holomorphic line bundle $L$ is called \emph{positive} if it admits such a Hermitian metric.  For a holomorphic line bundle over an arbitrary compact complex manifold, positivity is equivalent to ampleness by the Kodaira embedding theorem. In the present paper, ampleness of $K_S$ is used both to invoke Reider's theorem and to choose a Hermitian metric with positive Chern curvature.};
\item
\[
 K^2=9;
\]
\item every self-homeomorphism $G:S\to S$ satisfies
\[
 G^*K=K
 \qquad\text{in }H^2(S;\Z),
\]
where $K=c_1(K_S)$ is the integral canonical class.
\end{enumerate}
\end{proposition}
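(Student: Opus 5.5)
The plan is to obtain (i) and (ii) from the structure of $S$ as a minimal surface of general type that is a ball quotient, and (iii) from the topological invariance of Chern numbers combined with the Seiberg--Witten characterization of the canonical class.

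For (ii), the Chern numbers are read off directly. Since $S$ has the rational homology of $\mathbb{CP}^2$, we have $b_1=0$ and $b_2=b_2^+=1$, so the Euler characteristic gives $c_2(S)=3$. Noether's formula $\chi(\mathcal O_S)=(K^2+c_2)/12$ forces $K^2\equiv 9\pmod{12}$. The signature theorem $\sigma=(K^2-2c_2)/3$ with $\sigma=1$ then gives $K^2=9$ exactly. This also matches the Bogomolov--Miyaoka--Yau equality recalled above.

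For (i), I would use the uniformization $S=\mathbb B^2_{\mathbb C}/\Gamma$. The Bergman metric on the ball is invariant under $\mathrm{PU}(2,1)$ and has negative holomorphic sectional curvature. Hence the induced Kähler--Einstein metric on $S$ has negative Ricci form, and $-\mathrm{Ric}$ is a positive $(1,1)$-form representing $2\pi c_1(K_S)$. Equivalently, the metric on $K_S$ induced by the Bergman volume form has positive Chern curvature in the convention of the footnote. Kodaira's embedding theorem then gives ampleness. As an alternative, one can cite minimality together with the absence of $(-2)$-curves: a smooth rational curve $C$ with $K\cdot C=0$ would lift to a compact curve in the ball, which is impossible. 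Since $K^2>0$ and $K$ is nef, Nakai--Moishezon then gives ampleness.

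Part (iii) is the main point. The class $K$ is characteristic, because $K\equiv w_2 \pmod 2$. Since $b_2=1$, its image in $H^2(S;\Z)/\mathrm{Tors}\cong\Z$ is $\pm 3$ times a generator $h$, by $K^2=9$ and $h^2=1$. A self-homeomorphism $G$ preserves the intersection form and the orientation. The orientation is preserved because the signature $1$ is nonzero, so orientation reversal is impossible when $b_2^+=1$ and $b_2^-=0$. Since $\mathrm{Aut}(\Z,\langle 1\rangle)=\{\pm1\}$, rationally $G^*K=\pm K$.

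To rule out the minus sign, and to control the torsion part, I would use the fact that homeomorphisms of smooth 4-manifolds need not preserve Seiberg--Witten invariants. So the argument should be topological rather than gauge-theoretic. The first tool is $H_1(S;\Z)=\Gamma^{\mathrm{ab}}$, which is finite, so $H^2(S;\Z)\cong\Z\oplus\mathrm{Tors}$ with $\mathrm{Tors}\cong H_1$. The second tool is the Kharlamov--Kulikov argument, which I would follow. It shows that a homeomorphism is homotopic to a diffeomorphism, or at least acts on cohomology like a holomorphic or antiholomorphic map, using Mostow rigidity for the aspherical manifold $S=K(\Gamma,1)$. A homotopy equivalence induces an automorphism of $\Gamma$, and by strong rigidity this automorphism is realized by an isometry of the ball quotient. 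That isometry is either holomorphic or antiholomorphic. A biholomorphism preserves $K$. An antiholomorphic map $\bar G$ sends $K_S$ to $K_{\bar S}$, and $c_1(K_{\bar S})=-c_1(K_S)$ while the orientation is preserved in complex dimension two, so it also satisfies $\bar G^*c_1(K_S)=c_1(K_S)$. Hence $G^*K=K$ in all cases, including the torsion part, because the action on cohomology depends only on the homotopy class of $G$.

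The main obstacle is this last step: making the identification of the antiholomorphic pullback on $c_1$ precise, and tracking signs so that the torsion component of $K$ is fixed and not merely the free part. I would handle it by computing $G^*K$ via the isometry, which preserves the tangent bundle up to conjugation.
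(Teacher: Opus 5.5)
\textbf{Verdict: there is a genuine gap, and it is in (iii), exactly at the step you flagged as the main obstacle.} Parts (i) and (ii) are fine. Your signature-formula derivation of $K^2=9$ is a harmless variant of the paper's computation $\chi(\mathcal O_S)=1-q+p_g=1$ followed by Noether's formula.

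\textbf{The sign for an anti-holomorphic map is wrong.} If $g\colon S\to S$ is anti-holomorphic, then $dg$ is complex antilinear. It is therefore a complex-linear isomorphism $TS\cong g^*\overline{TS}$, which gives $c_1(TS)=-g^*c_1(TS)$, i.e.\ $g^*K=-K$. Your own premises give the same answer: $g^*K_S\cong K_{\bar S}$ together with $c_1(K_{\bar S})=-c_1(K_S)$ yields $-K$, not $+K$. That $g$ preserves orientation in complex dimension two only says $g$ acts by $+1$ on $H^4$; it says nothing about the sign on $H^2$. The paper uses precisely $c^*K=-K$ for an anti-holomorphic $c$ in its Lefschetz computation. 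Moreover, $G^*K=-K$ is compatible with every topological constraint you list (intersection form, orientation, characteristic class). So the anti-holomorphic alternative from strong rigidity cannot be absorbed; it must be excluded.

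\textbf{The missing input is the Kharlamov--Kulikov theorem.} A fake projective plane admits no anti-holomorphic diffeomorphism; equivalently, $S$ is not biholomorphic to its conjugate $\bar S$. The paper proves this in three steps.
\begin{enumerate}
\item[(a)] An anti-holomorphic involution $c$ would have Lefschetz number $1-1+1=1=\chi(\operatorname{Fix} c)$. Hence some fixed component is $S^2$ or $\mathbb{RP}^2$. This component lifts to a compact component of the fixed set of an involution of the ball, which contradicts Smith theory: that fixed set must be $\mathbb{F}_2$-acyclic.
\item[(b)] $\operatorname{Aut}(S)$ has no element of even order. A holomorphic involution has no fixed curves, since relative proportionality $\chi(C)=2C^2\geq 0$ conflicts with adjunction and ampleness. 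So it has exactly three isolated fixed points by the topological Lefschetz formula. Each contributes $1/4$ to the holomorphic Lefschetz formula, giving $3/4=1$, a contradiction.
\item[(c)] An anti-holomorphic $g$ has finite order $2m$ with $m$ odd, because $g^2\in\operatorname{Aut}(S)$ and (b) applies. Then $g^m$ is an anti-holomorphic involution, contradicting (a).
\end{enumerate}
Only after this exclusion does strong rigidity supply a \emph{holomorphic} representative of the homotopy class of $G$, and hence $G^*K=K$. At that point your remark is correct: homotopy invariance of $G^*$ on integral cohomology also handles the torsion part.
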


\begin{proof}

Since $S$ has the rational homology of $\mathbb{CP}^2$, one has
\[
 b_1(S)=0,
 \qquad
 b_2(S)=1,
 \qquad
 c_2(S)=e(S)=3.
\]
Because $S$ is K\"ahler, $b_1=2q$ gives $q=0$, and the Hodge decomposition
together with $b_2=1$ gives $p_g=0$.  Hence
\[
 \chi(\mathcal O_S)=1-q+p_g=1.
\]
Noether's formula then gives
\[
 K_S^2
 =
 12\chi(\mathcal O_S)-c_2(S)
 =
 12-3
 =
 9.
\]
Equivalently,
\[
 K_S^2=3c_2(S),
\]
which is the equality case of the Bogomolov--Miyaoka--Yau inequality.  The
complex hyperbolic metric on the ball quotient has negative Ricci curvature,
so the canonical bundle is positive; in particular $K_S$ is ample.

It remains to explain the rigidity statement in part~\textup{(iii)}.
Write
\[
 S=\mathbb B_{\mathbb C}^2/\Gamma,
 \qquad
 \Gamma=\pi_1(S).
\]
Since $\mathbb B_{\mathbb C}^2$ is contractible, $S$ is a
$K(\Gamma,1)$.  Thus a self-homeomorphism
\[
 G:S\longrightarrow S
\]
is determined up to homotopy by the induced outer automorphism
\[
 [G_*]\in\operatorname{Out}(\Gamma).
\]
Strong rigidity for compact complex ball quotients implies that this outer
automorphism is realized by either a holomorphic or an anti-holomorphic
diffeomorphism
\[
 g:S\longrightarrow S.
\]
Equivalently, $G$ is homotopic to such a diffeomorphism $g$.  This is the
rigidity input used in
\cite[Proposition~2.1]{KharlamovKulikov}; see also Siu's strong rigidity
theorem for compact K\"ahler manifolds.

We next explain why the anti-holomorphic alternative cannot occur for a fake
projective plane.  Kharlamov and Kulikov prove the stronger statement that a
fake projective plane admits no anti-holomorphic diffeomorphism
\cite[Theorem~5.1]{KharlamovKulikovReal}.  The first step is to exclude an
anti-holomorphic involution
\[
 c:S\longrightarrow S,
 \qquad c^2=\id.
\]
Let
\[
 S_{\mathbb R}=\operatorname{Fix}(c).
\]
Since $S$ has the rational cohomology of $\mathbb{CP}^2$, one has
\[
 H^0(S;\mathbb Q)\cong H^2(S;\mathbb Q)\cong H^4(S;\mathbb Q)\cong\mathbb Q,
 \qquad
 H^1(S;\mathbb Q)=H^3(S;\mathbb Q)=0.
\]
The involution $c$ acts as $+1$ on $H^0(S;\mathbb Q)$.  It acts as $-1$
on $H^2(S;\mathbb Q)$, since this group is one-dimensional and generated
over $\mathbb Q$ by the nonzero canonical class $K$, while
$c^*K=-K$ for an anti-holomorphic map.  Finally, $c$ acts as $+1$ on
$H^4(S;\mathbb Q)$, because an anti-holomorphic map in complex dimension
two preserves the underlying real orientation.  Hence its Lefschetz number is
\[
 L(c)
 =
 \sum_{i=0}^{4}(-1)^i
 \operatorname{tr}\!\left(c^*:H^i(S;\mathbb Q)\to H^i(S;\mathbb Q)\right)
 =
 1-1+1
 =
 1.
\]
For an anti-holomorphic involution the fixed locus
$S_{\mathbb R}=\operatorname{Fix}(c)$ is a smooth real surface, and along
each fixed component the differential is $+1$ on the tangent bundle and
$-1$ on its normal bundle\footnote{More generally, the fixed-point set of a smooth involution is a
smooth submanifold.  To see this, choose a Riemannian metric invariant under
the involution $c$ by averaging an arbitrary metric over the $\mathbb Z/2$
action.  At a fixed point $p$, the differential $dc_p$ is then an orthogonal
involution, so
\[
 T_pS=E_+(p)\oplus E_-(p),
 \qquad
 E_\pm(p)=\ker(dc_p\mp\id).
\]
The exponential map of the invariant metric is $c$-equivariant, so in a sufficiently small neighborhood of $p$, the fixed locus is
$\exp_p(E_+(p))$.  In particular,
\[
 T_p\operatorname{Fix}(c)=E_+(p).
\]
Since $dc_p$ is an orthogonal involution, $E_-(p)$ is the orthogonal
complement of $E_+(p)$ and therefore identifies with the normal space to the
fixed locus.  Thus $dc_p$ acts as $+\id$ on the tangent space to each fixed
component and as $-\id$ on its normal space.

If $c$ is anti-holomorphic, then
\[
 dc_p\circ J=-J\circ dc_p.
\]
Consequently $J$ interchanges the two eigenspaces:
\[
 J E_+(p)=E_-(p),
 \qquad
 J E_-(p)=E_+(p).
\]
They therefore have the same real dimension, namely
$\dim_{\mathbb C}S$.  Hence, when $S$ is a complex surface, every nonempty
component of $\operatorname{Fix}(c)$ is a smooth real surface.  Moreover,
$J(T_p\operatorname{Fix}(c))$ is its normal space, so the fixed locus is
totally real.}.  Thus the clean Lefschetz fixed-point formula
identifies the contribution of each fixed component with its Euler
characteristic, and therefore
\[
 L(c)=\chi(S_{\mathbb R}).\footnote{One way to see this is to perturb the clean fixed locus to isolated
fixed points.  Let $F$ be a connected component of
$S_{\mathbb R}=\operatorname{Fix}(c)$ and choose a $c$-invariant tubular
neighborhood, identified with the normal bundle $NF$.  In these coordinates
the involution has the form
\[
 c(x,v)=(x,-v).
\]
Choose a Morse function $h:F\to\mathbb R$, let $\psi_t$ be the time-$t$
flow of $-\nabla h$, and extend $\psi_t$ to an isotopy $\varphi_t$ of a
neighborhood of $F$ which acts trivially in the normal direction and is
supported in that neighborhood.  Replacing $c$ by the homotopic map
$c_t=\varphi_t\circ c$, one has locally
\[
 c_t(x,v)=(\psi_t(x),-v).
\]
For sufficiently small $t>0$, the fixed points of $c_t$ near $F$ are
precisely $(p,0)$ with $p\in\operatorname{Crit}(h)$, and they are
nondegenerate.  At such a point,
\[
 dc_t=d\psi_t\oplus(-\id),
\]
so the normal factor in
$\det(\id-dc_t)$ is
$\det(2\id_{N_pF})>0$.  Since $\psi_t$ is the time-$t$ flow of $-\nabla h$, at a critical point
$p$ one has
\[
 d\psi_t|_p=e^{-t \operatorname{Hess}_p h}.
\]
  Hence, if
$\lambda_1,\ldots,\lambda_{\dim F}$ are the eigenvalues of $\operatorname{Hess}_p h$, then
\[
 \operatorname{sgn}\det(\id-d\psi_t|_p)
 =
 \prod_i\operatorname{sgn}(1-e^{-t\lambda_i})
 =
 \prod_i\operatorname{sgn}(\lambda_i)
 =
 \operatorname{sgn}\det(\operatorname{Hess}_p h)
 =
 (-1)^{\operatorname{ind}_h(p)}.
\]The Lefschetz number is invariant under homotopy, and therefore the total
contribution of $F$ is
\[
 \sum_{p\in\operatorname{Crit}(h)}
 (-1)^{\operatorname{ind}_h(p)}
 =
 \chi(F)
\]
by Morse theory.  Performing this perturbation independently near each fixed
component gives
\[
 L(c)=\sum_{F\subset S_{\mathbb R}}\chi(F)
 =\chi(S_{\mathbb R}).
\]}
\]
Consequently
\[
 \chi(S_{\mathbb R})=1.
\]
In particular, $S_{\mathbb R}$ has a component diffeomorphic to either $S^2$ or
$\mathbb{RP}^2$.

Choose a component
\[
 F\subset S_{\mathbb R}
\]
diffeomorphic to either $S^2$ or $\mathbb{RP}^2$, and choose a point
$x\in F$.  Let
\[
 p:\mathbb B_{\mathbb C}^2\longrightarrow S
\]
be the universal covering and choose a lift $\widetilde x\in p^{-1}(x)$.
There is a unique lift
\[
 \widetilde c:\mathbb B_{\mathbb C}^2\longrightarrow
 \mathbb B_{\mathbb C}^2
\]
of $c$ satisfying $\widetilde c(\widetilde x)=\widetilde x$.  Since
$\widetilde c^2$ is a deck transformation fixing $\widetilde x$, one has
$\widetilde c^2=\id$, so $\widetilde c$ is again an involution.

Let $\widetilde F$ be the connected component of
$\operatorname{Fix}(\widetilde c)$ containing $\widetilde x$.  The
restriction
\[
 p|_{\widetilde F}:\widetilde F\longrightarrow F
\]
is a connected covering.  Indeed, if $\gamma$ is a path in $F$ starting at
$x$ and $\widetilde\gamma$ is its lift starting at $\widetilde x$, then
$c\circ\gamma=\gamma$, while both $\widetilde\gamma$ and
$\widetilde c\circ\widetilde\gamma$ are lifts of $\gamma$ with the same
initial point.  Uniqueness of path lifting therefore gives
\[
 \widetilde c\circ\widetilde\gamma=\widetilde\gamma,
\]
so $\widetilde\gamma$ lies entirely in $\widetilde F$.  Hence
$p(\widetilde F)=F$.  Since $F$ is either $S^2$ or $\mathbb{RP}^2$, its
fundamental group is finite, and therefore every connected covering of $F$
has finitely many sheets.  Thus $\widetilde F$ is a finite cover of the
compact surface $F$ and is itself compact.
 This is impossible by
Smith theory.  Indeed, since
$\mathbb B_{\mathbb C}^2$
is mod-$2$ acyclic, Smith theory implies the fixed-point set of the lifted involution is also
mod-$2$ acyclic.  In particular, if it is nonempty then it is connected.
On the other hand, an anti-holomorphic involution has a smooth real
two-dimensional fixed locus.  If this fixed locus were compact, then it would
be a closed connected surface and hence 
\[
 H_2\bigl(\operatorname{Fix}(\widetilde c);\mathbb F_2\bigr)
 \cong \mathbb F_2,
\]
contradicting mod-$2$ acyclicity.  Thus the fixed-point set upstairs cannot
have a compact component.  This contradiction excludes anti-holomorphic
involutions.

To pass from involutions to arbitrary anti-holomorphic diffeomorphisms,
Kharlamov and Kulikov prove that
\[
 \operatorname{Aut}(S)
\]
contains no element of even order
\cite[Lemma~5.1]{KharlamovKulikovReal}.  We recall the argument, since it is
particularly short.  It is enough to exclude a non-trivial holomorphic involution
\[
 h:S\longrightarrow S.
\]
Since $h$ is a non-trivial holomorphic involution, every
nonempty connected component of $\operatorname{Fix}(h)$ is a smooth complex
submanifold of $S$ of
dimension either $0$ or $1$.
A one-dimensional component $C$ of $\operatorname{Fix}(h)$ would satisfy,
by the Enoki--Hirzebruch relative proportionality theorem,
\[
 \chi(C)=2C^2.
\]
Since $b_2(S)=1$ and $K^2=9\neq0$, there is some $r\in\mathbb Q$ such that
\[
 [C]=rK
 \qquad\text{in }H^2(S;\mathbb Q).
\]
Hence
\[
 C^2=r^2K^2\geq0.
\]
Since $C$ is a nonempty complex curve and $K_S$ is ample, the
Nakai--Moishezon criterion gives
\[
 K\cdot C>0.
\]
The adjunction formula therefore yields
\[
 2g(C)-2=C^2+K\cdot C>0,
\]
so
\[
 \chi(C)=2-2g(C)<0.
\]
This contradicts
\[
 \chi(C)=2C^2\geq0.
\]
Hence $\operatorname{Fix}(h)$ has no one-dimensional components.
The topological Lefschetz formula then shows that $h$ has exactly three
isolated fixed points.  Indeed, $h$ acts trivially on
$H^0(S;\mathbb Q)$ and $H^4(S;\mathbb Q)$ and, since it preserves the
canonical class, also on the one-dimensional group $H^2(S;\mathbb Q)$.
On the other hand, the holomorphic Lefschetz formula applied to
$\mathcal O_S$ has left-hand side
\[
 \sum_{i=0}^{2}(-1)^i
 \operatorname{tr}
 \bigl(h^*:H^i(S,\mathcal O_S)\to H^i(S,\mathcal O_S)\bigr)
 =1,
\]
because $p_g=q=0$.  At each isolated fixed point the differential of the
involution is $-\id$; otherwise a $+1$ eigendirection would give a
positive-dimensional fixed locus.  Thus each fixed point contributes
\[
 \frac{1}{\det(\id-(-\id))}=\frac14
\]
to the holomorphic Lefschetz formula.  The three fixed points would therefore
give
\[
 \frac34=1,
\]
a contradiction.  Hence $\operatorname{Aut}(S)$ has no element of even
order.

Finally, suppose that $g:S\to S$ were anti-holomorphic.  Since $K_S$ is
ample, $\operatorname{Aut}(S)$ is finite, and $g^2$ is a holomorphic
automorphism.  Thus $g$ has finite even order, say
\[
 \operatorname{ord}(g)=2m.
\]
Then $g^2$ has order $m$.  Since $\operatorname{Aut}(S)$ has no element of
even order, $m$ must be odd.  It follows that
\[
 g^m
\]
is an anti-holomorphic involution, contradicting what we have shown.
Therefore the anti-holomorphic alternative in the strong-rigidity theorem is
impossible.

It follows that the diffeomorphism $g$ homotopic to $G$ is holomorphic.
Hence it preserves the canonical line bundle:
\[
 g^*K_S\cong K_S.
\]
Thus, 
\[
 G^*K=g^*K=K
 \qquad\text{in }H^2(S;\mathbb Z).
\]
This in fact proves the canonical-class rigidity for every
self-homeomorphism of $S$; in the arguments below we only need it for the
orientation-preserving self-homeomorphisms obtained by gluing across the
divisor cap.
\end{proof}

\begin{lemma}\label{lem:bicanonical}
A general member $D\in|2K|$ of the complete linear system \footnote{The complete linear system $|2K|$ is
the projective space of nonzero sections of $K_S^{\otimes2}$ modulo
multiplication by nonzero scalars, with a section identified with its zero
divisor.  Its base locus is the set of points where all sections vanish; the
system is base-point free when this set is empty.}  $|2K|$ is a smooth connected
curve satisfying
\[
 D^2=36,
 \qquad
 g(D)=28.
\]
\end{lemma}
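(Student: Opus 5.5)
The plan has three parts: first show $|2K|$ is base-point free, then apply Bertini to get a smooth member, and finally compute $D^2$ and $g(D)$ numerically. Connectedness will come from ampleness.

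\textbf{Base-point freeness.} Apply Reider's theorem to $L=2K=K_S+K_S$, writing $2K=K_S+N$ with $N=K_S$. By Proposition~\ref{prop:KKsurface}, $N$ is nef (indeed ample) and $N^2=K^2=9\ge5$. Reider's theorem says that if $p$ is a base point of $|K_S+N|$, then there is an effective divisor $E\ni p$ with either
\[
N\cdot E=0,\ E^2=-1
\qquad\text{or}\qquad
N\cdot E=1,\ E^2=0 .
\]
We rule out both cases using $b_2(S)=1$. Over $\mathbb Q$ write $[E]=rK$. Ampleness gives $K\cdot E>0$, which excludes the first case and forces $r>0$. In the second case, $E^2=r^2\cdot 9>0$, contradicting $E^2=0$. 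Hence $|2K|$ has no base points. This step needs the most care: Reider's statement must be applied with the right numerical hypotheses, and the rational proportionality argument must account for torsion in $H^2(S;\Z)$. Intersection numbers are insensitive to torsion, so the argument goes through.

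\textbf{Smoothness and connectedness.} Since the system is base-point free, it defines a morphism $\phi:S\to\mathbb{CP}^N$. A general member of $|2K|$ is the pullback of a general hyperplane, so Bertini's theorem makes it smooth. We also need $h^0(2K)\ge2$. Riemann--Roch together with Kodaira vanishing (note $2K-K=K$ is ample) gives
\[
h^0(2K)=\chi(\mathcal O_S)+\tfrac12(2K)\cdot(2K-K)=1+9=10 .
\]
For connectedness, $D$ is an ample effective divisor, being the zero set of a section of the ample bundle $K_S^{\otimes2}$. An ample effective divisor on a smooth projective surface is connected; one can see this via Kodaira vanishing $H^1(S,\mathcal O(-D))=0$ and the sequence
\[
0\to\mathcal O(-D)\to\mathcal O_S\to\mathcal O_D\to0,
\]
which gives $h^0(\mathcal O_D)=1$.

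\textbf{Numerics.} From the intersection form,
\[
D^2=(2K)^2=4\cdot9=36 .
\]
Adjunction then gives
\[
2g(D)-2=D^2+K\cdot D=36+18=54,
\]
so $g(D)=28$.
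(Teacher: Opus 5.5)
Your proof is correct and follows the same route as the paper: Reider's theorem for $L=K$ (with $K^2=9\ge 5$) gives base-point freeness of $|2K|$, Bertini gives smoothness, the restriction sequence with Kodaira vanishing gives connectedness, and adjunction gives the numerics. The one local difference is how you exclude the alternative $K\cdot E=1$, $E^2=0$: you use $b_2(S)=1$ to write $[E]=rK$ rationally and get $E^2=9r^2>0$, whereas the paper notes that $E^2+K\cdot E=2p_a(E)-2$ must be even, which does not need $b_2=1$ and so works on any surface with $K$ ample and $K^2\ge 5$; your extra computation $h^0(2K)=10$ is correct but not needed.
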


\begin{proof}
We first show that $|2K|$ is base-point free.  Apply the base-point part of Reider's theorem \cite[Theorem 1]{Reider} to the nef line
bundle\footnote{A holomorphic line bundle $L$ on a smooth projective variety $X$
is called \emph{nef} (numerically effective) if
\[
 c_1(L)\cdot C\geq0
\]
for every irreducible complex curve $C\subset X$.  Every ample line bundle is nef.} $L=K$.
Reider's theorem says that if $L$ is a nef line bundle on a smooth
projective surface with $L^2\geq5$ and the adjoint linear system
$|K_S+L|$ has a base point, then there exists a nonzero effective divisor
$E$ such that either
\[
 L\cdot E=0,\qquad E^2=-1,
\]
or
\[
 L\cdot E=1,\qquad E^2=0.
\]

The first alternative contradicts ampleness by the Nakai--Moishezon criterion.  In the second alternative,
adjunction for the arithmetic genus of the effective divisor $E$ gives
\[
 2p_a(E)-2=E^2+K\cdot E=1.
\]
The left-hand side is even because $p_a(E)\in\Z$, regardless of whether $E$ is
irreducible or reduced, so this is impossible.  Thus $|2K|$ is base-point free.

Bertini's theorem says that a general member of a linear system on a smooth
complex variety is smooth away from the base locus; see
\cite[Chapter~III, Section~10, especially Corollary~10.9]{Hartshorne}.  Since
the base locus of $|2K|$ is empty, a general member $D$ is smooth. 

Next, let us show it is
connected.  The divisor sequence
\[
 0\longrightarrow\mathcal{O}_S(-2K)
 \longrightarrow\mathcal{O}_S
 \longrightarrow\mathcal{O}_D
 \longrightarrow0
\]
gives the cohomology exact sequence
\[
 0\longrightarrow H^0(S,\mathcal O_S(-2K))
 \longrightarrow H^0(S,\mathcal O_S)
 \longrightarrow H^0(D,\mathcal O_D)
 \longrightarrow H^1(S,\mathcal O_S(-2K)).
\]
Ampleness gives $H^0(S,\mathcal O_S(-2K))=0$.  By Serre duality and Kodaira
vanishing,
\[
 H^1(S,\mathcal O_S(-2K))^*
 \cong H^1(S,\mathcal O_S(3K))=0.
\]
Hence restriction induces
$H^0(S,\mathcal O_S)\cong H^0(D,\mathcal O_D)$, so
$H^0(D,\mathcal O_D)=\C$.  A smooth projective curve has one independent
constant function on each connected component; therefore $D$ is connected.
Finally,
\[
 D^2=(2K)^2=4K^2=36,
\]
and adjunction gives
\[
 2g(D)-2=D\cdot(D+K)=2K\cdot3K=6K^2=54.
\]
Thus $g(D)=28$.
\end{proof}

Fix a fake projective plane $S$ and a smooth connected member $D\in|2K|$.  By the definition of the complete
linear system $|2K|$, there is a nonzero section
$s\in H^0(S,K_S^{\otimes2})$, unique up to multiplication by a nonzero scalar,
whose zero divisor is $D$. 
Since $D=\operatorname{div}(s)$ is a smooth divisor, $s$ is
transverse to the zero section along $D$.
Fix such an $s$ for the remainder of the paper.

\section{The conjugate Stein domains}\label{sec:stein}

Let $s$ be the section fixed at the end of
Section~\ref{sec:algebraic}.  Choose a Hermitian metric $h$ on the ample line
bundle $2K$ with positive Chern curvature.  On $S\setminus D$, define
\begin{equation}\label{eq:Phi}
 \Phi=-\log\norm{s}_h^2.
\end{equation}
Since the zero divisor of $s$ is exactly $D$, one has
\[
 \lVert s(x)\rVert_h^2\longrightarrow0
 \qquad\text{as }x\to D.
\]
Hence
\[
 \Phi(x)=-\log\lVert s(x)\rVert_h^2\longrightarrow+\infty
 \qquad\text{as }x\to D.
\]
Therefore, for every $c\in\mathbb R$, the sublevel set
\[
 \{\Phi\leq c\}
\]
is disjoint from some neighborhood of $D$.  Since it is closed in
$S\setminus D$, it is consequently closed in $S$.  As $S$ is compact,
$\{\Phi\leq c\}$ is compact. 
Thus $\Phi:S\setminus D\to\mathbb R$ is
proper.

We also verify its Levi form.  
Since $s$ is nowhere zero on $S\setminus D$, it is a global holomorphic
frame of $(2K)|_{S\setminus D}$.  
Since $\norm{s}_h^2=e^{-\Phi}$, we have
\[
 \Omega_h
 =\sqrt{-1}\,\partial\bar\partial\Phi
 =\frac12dd_J^c\Phi.
\]
Thus $dd_J^c\Phi$ is twice the positive Chern curvature of $(2K,h)$, and
$\Phi$ is a proper strictly $J$-plurisubharmonic function on
$S\setminus D$.

Choose a sufficiently large regular value $c$ and set
\begin{equation}\label{eq:Xdef}
 X=\{\Phi\leq c\},
 \qquad
 \rho_0=\Phi-c.
\end{equation}
Then $X$ is diffeomorphic to $S\setminus\operatorname{int}\nu(D)$ for a
closed tubular neighborhood $\nu(D)$, and $\rho_0\leq0$ is a strictly
plurisubharmonic defining function.  Consequently $(X,J|_X,\rho_0)$ is a
compact Stein datum in the sense of Section~\ref{sec:conventions} and $S=X\cup \nu(D)$.

Define
\[
 J_+=J|_X,
 \qquad
 J_-=-J|_X,
\]
and initially put
\begin{equation}\label{eq:conjugateforms}
 \lambda_+^0=d_{J_+}^c\rho_0,
 \qquad
 \lambda_-^0=d_{J_-}^c\rho_0=-\lambda_+^0.
\end{equation}
Let $Y=\partial X$ and define the initial boundary contact forms and their
cooriented kernels by
\begin{equation}\label{eq:initial-boundary-data}
 \alpha_\pm^0=\lambda_\pm^0|_Y,
 \qquad
 \xi_\pm=\ker\alpha_\pm^0.
\end{equation} 
The sign reversal causes no conflict: both the complex structure and the
compatible symplectic form are reversed.  Since the complex dimension is two, the
 orientations of $X$ induced by $J_+$ and $J_-$ are the same, and
\[
 d\lambda_-^0(v,J_-v)
 =(-d\lambda_+^0)(v,-J_+v)
 =d\lambda_+^0(v,J_+v)>0.
\]
 Thus $(X,J_-,\rho_0)$ is also a Stein datum.

In the normalization process of Section~\ref{sec:normalization}, we deform the common defining function
$\rho_0$ in two different ways, through strictly
$J_\pm$-plurisubharmonic defining functions respectively, so that the resulting boundary
forms are pullbacks of one fixed Boothby--Wang connection form. The complex
structures $J_\pm$ remain fixed, and on the boundary each form is only
multiplied by a positive function.  Hence the cooriented contact plane fields, 
the first Chern classes, and the canonical $\SpinC$ structures do not change
during normalization.  We therefore compute the latter two invariants now,
using the common initial defining function $\rho_0$.
\begin{proposition}\label{prop:c1equal}
The first Chern classes agree:
\[
 c_1(TX,J_+)=c_1(TX,J_-)
 \quad\text{in }H^2(X;\Z).
\]
\end{proposition}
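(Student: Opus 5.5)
We prove the equality by computing both classes as restrictions of $\pm K$ and then using that $K|_X$ has order two. The complex vector bundle $(TX,J_-)$ is the complex conjugate of $(TX,J_+)$. Hence
\[
 c_1(TX,J_-)=-c_1(TX,J_+).
\]
Since $J_+=J|_X$ is the restriction of the complex structure of $S$,
\[
 c_1(TX,J_+)=c_1(TS,J)|_X=-K|_X=-k_X.
\]
It therefore suffices to show $2k_X=0$ in $H^2(X;\Z)$.

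This torsion statement comes from the bicanonical section $s$ fixed at the end of Section~\ref{sec:algebraic}. It vanishes exactly along $D$, so it is nowhere zero on $S\setminus D$. Thus it trivializes $K_S^{\otimes 2}$ over $S\setminus D$, and in particular over $X\subset S\setminus D$. Consequently
\[
 2k_X=c_1\bigl(K_S^{\otimes2}|_X\bigr)=0 .
\]
Combining the steps,
\[
 c_1(TX,J_-)=k_X=-k_X=c_1(TX,J_+).
\]

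The only point needing care, and the main (mild) obstacle, is the first step. Conjugating the complex structure negates $c_1$ integrally, not just rationally. The reason is that $(TX,-J)$ is isomorphic as a complex bundle to $\overline{(TX,J)}$, which in turn is isomorphic to its dual via a Hermitian metric, and $c_1$ of the dual is $-c_1$ in integral cohomology.

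Orientation conventions cause no issue here. The paper already noted that $J_\pm$ induce the same orientation in complex dimension two. In any case, $c_1$ as a class in $H^2(X;\Z)$ depends only on the complex bundle, not on the orientation of $X$. Since the classes agree exactly and not merely up to torsion, no universal coefficient argument is needed. This also sets up part (iii): the canonical $\SpinC$ structures can still differ by the order-two class $k_X$, because $\mathfrak s_{J_-}-\mathfrak s_{J_+}$ is detected by $k_X$ itself rather than by $2k_X$.
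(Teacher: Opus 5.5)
Your proof is correct and follows the paper's argument: $c_1(TX,J_\pm)=\mp K|_X$ by conjugation, and $2K|_X=0$ because the nowhere-vanishing section $s$ trivializes $K_S^{\otimes2}$ over $X$. Your justification that conjugation negates $c_1$ integrally, via $\overline{E}\cong E^*$, is a detail the paper states without proof.
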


\begin{proof}
The section $s$ is nowhere zero on $X$ and trivializes $(2K)|_X$.  Therefore
\begin{equation}\label{eq:twotorsion}
 2K|_X=0\quad\text{in }H^2(X;\Z).
\end{equation}
Complex conjugation changes the sign of the first Chern class, so
\[
 c_1(TX,J_+)=-K|_X,
 \qquad
 c_1(TX,J_-)=K|_X.
\]
Equation \eqref{eq:twotorsion} gives $K|_X=-K|_X$.
\end{proof}

\begin{lemma}[Conjugation of a $\SpinC$ structure]
\label{lem:spinc-conjugation}
For every $\SpinC$ structure $\mathfrak s$ on an oriented four-manifold $X$,
conjugation satisfies 
\[
 \overline{\mathfrak s}=\mathfrak s-c_1(\mathfrak s)
 \quad\text{in the }H^2(X;\Z)\text{-torsor of }\SpinC\text{ structures}.
\]
\end{lemma}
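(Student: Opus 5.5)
We prove the identity by comparing determinant line bundles and using that the group $H^2(X;\Z)$ acts on $\SpinC$ structures by tensoring the spinor bundles with a line bundle. Represent $\mathfrak s$ by a principal $\mathrm{Spin}^c(4)$-bundle $P$ lifting the oriented frame bundle, with spinor bundles $W^\pm$ and determinant line $\det\mathfrak s=\det W^+$, so $c_1(\mathfrak s)=c_1(\det W^+)$. The conjugate $\overline{\mathfrak s}$ is obtained from the automorphism of $\mathrm{Spin}^c(4)=(\mathrm{Spin}(4)\times U(1))/\{\pm1\}$ given by $[g,z]\mapsto[g,\bar z]$. On spinor bundles it replaces $W^\pm$ by their complex conjugates $\overline{W^\pm}$, with Clifford multiplication unchanged as a real map. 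This uses the fact that the spinor representation of $\mathrm{Spin}(4)=SU(2)\times SU(2)$ is quaternionic, so $\overline{S^\pm}\cong S^\pm$ equivariantly. Hence $\det\overline{\mathfrak s}=\overline{\det\mathfrak s}$ and $c_1(\overline{\mathfrak s})=-c_1(\mathfrak s)$.

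The main step is to identify $\overline{\mathfrak s}$ with $\mathfrak s\otimes L$ for $L=(\det\mathfrak s)^{-1}$. Locally, over a spin open set with genuine spinor bundles $S^\pm$, we have $W^\pm=S^\pm\otimes\mathcal L$ with $\mathcal L^{2}=\det\mathfrak s$. Then
\[
 \overline{W^\pm}=\overline{S^\pm}\otimes\overline{\mathcal L}\cong S^\pm\otimes\mathcal L\otimes\mathcal L^{-2}=W^\pm\otimes(\det\mathfrak s)^{-1}.
\]
The quaternionic structure $j$ on $S^\pm$ is canonical up to sign, and the sign ambiguity cancels against the sign ambiguity of $\mathcal L$. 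Because of this, the local isomorphisms $\overline{W^\pm}\cong W^\pm\otimes(\det\mathfrak s)^{-1}$ glue to a global isomorphism that is compatible with Clifford multiplication. Globally, the map is $\overline{W}\to W\otimes\det(W^+)^{-1}$ induced by $\bar\psi\mapsto j\psi$ paired with the inverse of the determinant. One can also avoid the spin cover and write this map directly: since $W^+$ has rank two, there is a canonical isomorphism $\overline{W^+}\cong (W^+)^*\cong W^+\otimes\det(W^+)^{-1}$ via the Hermitian metric, and the same construction applies to $W^-$ using $\det W^-=\det W^+$.

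The expected obstacle is checking that this isomorphism intertwines Clifford multiplication. That verification is a pointwise linear-algebra check for the standard representation of $\mathrm{Spin}^c(4)$ on $\mathbb H\oplus\mathbb H$, and there it reduces to the fact that quaternionic conjugation commutes with the $\mathrm{Spin}(4)$ action.

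Since the action of $c\in H^2(X;\Z)$ is $\mathfrak s\mapsto\mathfrak s\otimes L_c$, we conclude
\[
 \overline{\mathfrak s}=\mathfrak s+c_1\bigl((\det\mathfrak s)^{-1}\bigr)=\mathfrak s-c_1(\mathfrak s).
\]
As a consistency check, $c_1(\mathfrak s+a)=c_1(\mathfrak s)+2a$ gives
\[
 c_1(\overline{\mathfrak s})=c_1(\mathfrak s)-2c_1(\mathfrak s)=-c_1(\mathfrak s),
\]
which agrees with the first paragraph.
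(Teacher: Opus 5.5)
Your argument is correct. Its engine is the same identity the paper uses, $\bar z=z\cdot z^{-2}$ for $z\in U(1)$; what differs is the level at which you apply it.

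The paper works directly with the \v{C}ech transition functions $[g_{ij},z_{ij}]$ of the principal $\mathrm{Spin}^c(4)$-bundle. Conjugation leaves $g_{ij}$ untouched and inverts $z_{ij}$, while twisting by $(\det\mathfrak s)^{-1}$ multiplies $z_{ij}$ by $z_{ij}^{-2}$. The two cocycles therefore coincide, and nothing about spinors or Clifford multiplication has to be checked.

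You instead model $\overline{\mathfrak s}$ by the conjugate spinor bundle $\overline{W}$. This is why you need the quaternionic structure on $S^\pm$, or globally the rank-two identity $\overline{E}\cong E^*\cong E\otimes\det E^{-1}$, together with a Clifford-compatibility check. In the model $S^\pm=\mathbb H$ that check holds because right multiplication by $j$ commutes with left multiplication by all quaternions, which gives $\mathrm{Spin}(4)$-equivariance and Clifford compatibility at once. In exchange, your route yields an explicit, coordinate-free Clifford-module isomorphism $\overline{W}\cong W\otimes(\det W^+)^{-1}$.

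Two small points.
\begin{enumerate}
\item An equivariant quaternionic structure on $S^\pm$ is determined only up to a unit complex scalar, not just up to sign. The ambiguity still cancels: for a local unit section $\ell$ of $\mathcal L$, replacing $(\psi,\ell)$ by $(\mu\psi,\mu^{-1}\ell)$ with $|\mu|=1$ multiplies $j\psi\otimes\ell^{-1}$ by $\bar\mu\mu=1$.
\item The detour through $\overline{W}$ can be skipped. The spinor bundle of $\overline{\mathfrak s}$ is associated to the representation $[g,z]\mapsto\bar z\rho_0(g)=z^{-2}\cdot z\rho_0(g)$, where $\rho_0$ is the spin representation of $\mathrm{Spin}(4)$. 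This is literally the representation defining $W\otimes(\det\mathfrak s)^{-1}$, so this is the paper's cocycle computation in representation-theoretic form.
\end{enumerate}
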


\begin{proof}
Choose a good cover and write the transition functions of the principal
$\SpinC(4)$ bundle of $\mathfrak s$ as
\[
 [g_{ij},z_{ij}]\in
 (\operatorname{Spin}(4)\times U(1))/\{(1,1),(-1,-1)\}.
\]
The determinant homomorphism sends $[g,z]$ to $z^2$, so the determinant line
$L=\det(\mathfrak s)$ has transition functions $z_{ij}^2$. On the other hand, $\overline{\mathfrak s}$ has transitions
$[g_{ij},z_{ij}^{-1}]$.

Twisting $\mathfrak s$ by $L^{-1}$ multiplies the central component by the
transition functions $z_{ij}^{-2}$ of $L^{-1}$.  Its transitions are therefore
\[
 [g_{ij},z_{ij}z_{ij}^{-2}]
 =[g_{ij},z_{ij}^{-1}].
\]
Thus
\[
 \overline{\mathfrak s}=\mathfrak s\otimes L^{-1}.
\]
Twisting by a line bundle of first Chern class $a$ is translation by $a$ in the
affine $H^2(X;\Z)$-torsor of $\SpinC$ structures, and
$c_1(L)=c_1(\mathfrak s)$.  This proves the stated equality exactly, not merely
after quotienting by $2$-torsion.
\end{proof}

\begin{proposition}[The canonical $\SpinC$ structures]\label{prop:spincdifferent}
Let $k_X=K|_X$.  Then $k_X$ is a nonzero element of order two and
\begin{equation}\label{eq:spincdifference}
 \mathfrak{s}_{J_-}
 =\overline{\mathfrak{s}_{J_+}}
 =\mathfrak{s}_{J_+}+k_X
 \neq\mathfrak{s}_{J_+}.
\end{equation}
\end{proposition}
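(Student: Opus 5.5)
The argument has three parts: identify $\mathfrak s_{J_-}$ with the conjugate of $\mathfrak s_{J_+}$, compute that conjugate using Lemma~\ref{lem:spinc-conjugation}, and show that $k_X$ is a nonzero class of order two. The last part is where the real work lies.

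\emph{Conjugation.} The canonical $\SpinC$ structure of an almost complex structure $J$ has spinor bundles $\Lambda^{0,*}_J$ and determinant line $K_J^{-1}$, so $c_1(\mathfrak s_J)=c_1(TX,J)$. Replacing $J$ by $-J$ swaps the roles of $T^{1,0}$ and $T^{0,1}$. Hence $\Lambda^{0,q}_{-J}=\overline{\Lambda^{0,q}_J}$, and the Clifford action is carried over compatibly. We have already seen that the orientation is unchanged because the complex dimension is two. Together these give $\mathfrak s_{J_-}=\overline{\mathfrak s_{J_+}}$. Lemma~\ref{lem:spinc-conjugation} then yields
\[
\overline{\mathfrak s_{J_+}}=\mathfrak s_{J_+}-c_1(TX,J_+)=\mathfrak s_{J_+}+K|_X=\mathfrak s_{J_+}+k_X,
\]
where we used $c_1(TX,J_+)=-K|_X$ from the proof of Proposition~\ref{prop:c1equal}. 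Since the $H^2(X;\Z)$-action on $\SpinC$ structures is free, $\mathfrak s_{J_-}\neq\mathfrak s_{J_+}$ is equivalent to $k_X\neq0$. By \eqref{eq:twotorsion} we know $2k_X=0$, so it remains only to show $k_X\neq0$.

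\emph{Nonvanishing of $k_X$ (main step).} Use the decomposition $S=X\cup\nu(D)$ and the Mayer--Vietoris sequence. Alternatively, use the exact sequence of the pair $(S,X)$, together with excision and the Thom isomorphism $H^2(S,X)\cong H^2(\nu(D),\partial\nu(D))\cong H^0(D)\cong\Z$. The image of the generator of this $\Z$ in $H^2(S)$ is $\PD[D]=2K$. So the kernel of the restriction $H^2(S;\Z)\to H^2(X;\Z)$ is exactly the subgroup generated by $2K$. Therefore $k_X=0$ would force $K=m\cdot 2K$ for some integer $m$. Then $(1-2m)K=0$, and pairing with $K$ gives $(1-2m)K^2=9(1-2m)=0$. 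This is impossible because $1-2m$ is odd, hence nonzero. So $k_X\neq0$, and with $2k_X=0$ it has order exactly two.

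The only point requiring care is the identification of the kernel of restriction with $\langle\PD[D]\rangle$. This follows because the Thom class pushes forward to the Poincaré dual of $D$, and $[D]=2K$ since $D$ is the zero divisor of the section $s$ of $K_S^{\otimes2}$. Note that torsion in $H^2(S;\Z)$ is irrelevant to the argument: the contradiction uses only the intersection pairing with $K$, where $K^2=9$ by Proposition~\ref{prop:KKsurface}.
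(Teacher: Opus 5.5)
Your proof is correct and follows essentially the same route as the paper. The exact sequence of the pair $(S,X)$, with excision and the Thom isomorphism, shows that the kernel of $H^2(S;\Z)\to H^2(X;\Z)$ is $\langle 2K\rangle$; so $k_X=0$ would make $K$ torsion, contradicting $K^2=9$. Then $\mathfrak s_{J_-}=\overline{\mathfrak s_{J_+}}=\mathfrak s_{J_+}+k_X$ follows from Lemma~\ref{lem:spinc-conjugation} with $c_1(\mathfrak s_{J_+})=-k_X$. You spell out two points the paper leaves implicit, namely the identification $\Lambda^{0,*}_{-J}=\overline{\Lambda^{0,*}_J}$ and the explicit pairing $(1-2m)K^2\neq 0$.
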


\begin{proof}
Identify $X$ with
$S\setminus\Int\nu(D)$, where $\nu(D)$ is the closed tubular neighborhood
removed from $S$.  Let $i:D\hookrightarrow S$ and $j:X\hookrightarrow S$
denote the inclusions.  Excision gives
\begin{equation}\label{eq:excision-pair}
 H^2(S,X;\Z)\cong H^2(\nu(D),\partial\nu(D);\Z),
\end{equation}
while the Thom isomorphism for the real rank-two normal bundle, oriented by
its complex structure, gives
\begin{equation}\label{eq:Thom-isomorphism}
 H^0(D;\Z)\xrightarrow{\cong}
 H^2(\nu(D),\partial\nu(D);\Z).
\end{equation}
Under the composite identification, the long exact sequence of the pair
$(S,X)$ contains
\begin{equation}\label{eq:Thomrestriction}
 H^0(D;\Z)\xrightarrow{i_!}H^2(S;\Z)
 \xrightarrow{j^*}H^2(X;\Z),
 \qquad
 i_!(1)=\PD_S[D]=2K.
\end{equation}
If $k_X=K|_X$ is zero, exactness would imply $K=2mK$ for some $m\in\Z$.  This is
impossible because $K$ is non-torsion, as $K^2=9$.  Thus $k_X\neq0$; together
with \eqref{eq:twotorsion}, this shows that $k_X$ has order exactly two.

The relation $J_-=-J_+$ implies
$\mathfrak s_{J_-}=\overline{\mathfrak s_{J_+}}$.
Since $c_1(\mathfrak{s}_{J_+})=-k_X$, Lemma~\ref{lem:spinc-conjugation}
gives
\[
 \mathfrak{s}_{J_-}
 =\overline{\mathfrak{s}_{J_+}}
 =\mathfrak{s}_{J_+}+k_X.
\]
\end{proof}

\section{The boundary as a negative prequantization space}\label{sec:boundary}

Let us examine the contact structures $\xi_\pm$ on the circle bundle $Y=\partial X=-\partial \nu (D)$.

Let  $\pi:P\to\Sigma$ be a smooth principal $S^1$ bundle over a manifold.  A connection form is an $S^1$-invariant
1-form $\eta$ on $P$ satisfying $\eta(T)=1$, where $T$ is the positively oriented
infinitesimal generator of the circle action.  The invariance of $\eta$ gives
$\mathcal L_T\eta=0$.  Hence Cartan's formula implies
\[
 0=\mathcal L_T\eta
 =\iota_Td\eta+d(\iota_T\eta)
 =\iota_Td\eta+d(1)=\iota_Td\eta.
\]
Moreover,
\[
 \mathcal L_T(d\eta)=d(\mathcal L_T\eta)=0.
\]
Thus $d\eta$ is both horizontal and $S^1$-invariant, hence basic.  There is
therefore a unique 2-form $\Omega$ on $\Sigma$ such that
\[
 d\eta=\pi^*\Omega.
\]
We identify basic forms on $P$ with their corresponding forms on $\Sigma$.
Our Euler-class convention is
\begin{equation}\label{eq:eulerconvention}
 e(P\to\Sigma)
 =-\left[\frac{d\eta}{2\pi}\right]
 =-\left[\frac{\Omega}{2\pi}\right]
 \in H^2(\Sigma;\Z).
\end{equation}
Thus a connection form with positive curvature over an oriented surface has
negative Euler class.

Suppose now that $\Sigma$ is a closed oriented surface and that $\Omega$ is a
positive area form.  If $(v_1,v_2)$ is a positively oriented horizontal basis,
then
\[
 (\eta\wedge d\eta)(T,v_1,v_2)
 =\Omega(d\pi(v_1),d\pi(v_2))>0.
\]
Hence $\eta$ is a contact form.  Since $\eta(T)=1$ and
$\iota_Td\eta=0$, its Reeb vector field is $T$.  We call its kernel the
Boothby--Wang, or positive-curvature prequantization, contact structure \cite{BoothbyWang}.
 Reeb orbits of $\eta$ are the positively oriented circle fibers.
We call $(P,\ker\eta,t)$ a prequantization triple, where $t\in [S^1, P]$ denotes the
oriented free-homotopy class of a positively oriented circle fiber.

A holomorphic line bundle is called \emph{positive} if it admits a Hermitian
metric $h$ whose Chern curvature form $\Omega_h$ satisfies
\[
 \Omega_h(v,Jv)>0
\]
for every nonzero tangent vector $v$.
\begin{proposition}[Boundary prequantization model]\label{prop:boundarymodel}
Let $(S,J)$ be a compact complex surface, let $L\to S$ be a positive
holomorphic line bundle, and let $s\in H^0(S,L)$ have zero set
\[
 D=s^{-1}(0),
\]
where $D$ is a smooth connected curve along which $s$ vanishes transversely.
Put
\[
 \Phi=-\log\norm{s}_h^2
\]
for a positive Hermitian metric $h$, and assume $D^2=n>0$.  Then:
\begin{enumerate}[label=\textup{(\roman*)}]
\item
Every sufficiently large $c$ is a regular value of $\Phi$, and hence
\[
 Y_c=\{\Phi=c\}
\]
is a smooth hypersurface.

\item
Fix one $c$ as in (i).  For $J_+=J$ and $J_-=-J$, define
\[
 \alpha_{\pm,c}=d_{J_\pm}^c\Phi|_{Y_c}.
\]
Then $(Y_c,\ker\alpha_{+,c})$ is contactomorphic to the Boothby--Wang
contact structure \cite{BoothbyWang} on the oriented circle bundle of Euler
number $-n$ over
\[
 D_+=(D,J|_D).
\]

\item
If $u_+$ is the positive complex-normal circle\footnote{Precisely,
$u_+$ is the oriented free-homotopy class of a fiber of the unit circle bundle
$\mathbb S(N_{D/S})\to D$, oriented by multiplication by
$e^{\sqrt{-1}\theta}$ in the complex normal line.  Equivalently, it is the
fiberwise boundary orientation of the unit disk in that complex line.} and $t_+$ is the positive Boothby--Wang fiber, then
\[
 t_+=u_+^{-1}.
\]

\item
$(Y_c,\ker\alpha_{-,c})$ is contactomorphic to the corresponding
Boothby--Wang structure over
\[
 D_-=(D,-J|_D),
\]
again with Euler number $-n$.

\item
On the same underlying boundary,
\[
 [t_-]=[t_+^{-1}],
\]
where $t_-$ is the positive Boothby--Wang fiber of (iv).
\item
There is a coorientation-preserving contactomorphism
\[
 \tau_0:(Y_c,\ker\alpha_{+,c})
 \longrightarrow
 (Y_c,\ker\alpha_{-,c})
\]
satisfying
\[
 (\tau_0)_*[t_+]=[t_-].
\]
\end{enumerate}
\end{proposition}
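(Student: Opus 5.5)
The plan is to reduce everything to a local model near $D$ in which $\Phi$ is, up to a controlled error, the log of the squared fiber norm of the normal bundle. Near $D$, use the holomorphic tubular neighborhood structure: identify a neighborhood of $D$ smoothly with a disk bundle in $N=N_{D/S}\cong L|_D$ via $ds$ along $D$, so that $s$ corresponds to the fiber coordinate $w$ up to $O(|w|^2)$. Then
\[
 \Phi=-\log|w|^2-\log h_D+O(|w|),
\]
where $h_D$ is the restriction of the metric. The error is small in $C^1$ after rescaling, and in $C^2$ only after accounting for the singularity of $\log$.

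Part~(i) follows because $d\Phi$ is dominated by $-d\log|w|^2$, whose radial derivative is of order $1/|w|$, while the remaining terms have bounded derivative. Hence $d\Phi\neq0$ for $|w|$ small, that is, for $c$ large.

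For (ii), write $w=re^{\sqrt{-1}\theta}$ in a local holomorphic trivialization. With the convention $d^c f=-df\circ J$, one has
\[
 d^c_J(-\log|w|^2)=-2\,d\theta \quad (\text{up to sign bookkeeping}),
\]
so $\alpha_{+,c}$ is a negative multiple of the angular form along the fibers plus a horizontal term whose differential is the curvature of $h$. The key computation is
\[
 \alpha_{+,c}=-2\bigl(d\theta+(\text{connection term})\bigr)+(\text{small}).
\]
This shows that $\alpha_{+,c}$ restricted to $Y_c$ is, up to a small perturbation, $2\eta$ for a connection form $\eta$ on the circle bundle $Y_c\to D$. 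Here $\eta(T)=1$ for the generator $T=-\partial_\theta$, and $d\eta$ is a positive multiple of the curvature $\Omega_h$ on $D$, which is positive. Since contact forms are open and the perturbation can be made $C^1$-small, Gray stability along the linear path gives a contactomorphism to $\ker\eta$. The Reeb fiber is oriented by $-\partial_\theta$, which is (iii): $t_+=u_+^{-1}$.

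For the Euler number, $Y_c$ is the boundary of the complement, so it is oriented as $-\partial\nu(D)$. The fiber orientation of $t_+$ is reversed relative to $u_+$, and the normal bundle has degree $D^2=n$. With convention \eqref{eq:eulerconvention}, positive curvature forces the Euler number $-n$, and one checks consistency: reversing both the base-fiber orientation pairing and the total orientation turns $+n$ into $-n$ with respect to $t_+$ over $D_+$. This sign bookkeeping is the step I expect to be the main obstacle, since several conventions (outward-normal-first, Liouville orientation, $d^c$ sign) interact.

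For (iv), note that $\alpha_{-,c}=-\alpha_{+,c}$. The same computation with $J_-$ shows that $\alpha_{-,c}$ is approximately $2\eta'$, where $\eta'=-\eta$ is a connection form for the reversed circle action generated by $+\partial_\theta$. Over $D_-=(D,-J)$, with base orientation reversed, $d\eta'=-\pi^*\Omega$ is positive. The total orientation of $Y_c$ is unchanged, since fiber and base are both reversed, so the Euler number is again $-n$. This gives (v) directly: $t_-$ is oriented by $+\partial_\theta$, so $t_-=u_+=t_+^{-1}$.

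For (vi), both $(Y_c,\ker\alpha_{\pm,c})$ are Boothby--Wang structures on oriented circle bundles of Euler number $-n$ over the genus-$g$ surfaces $D_\pm$. Choose an orientation-preserving diffeomorphism $\psi:D_+\to D_-$, which exists because both are closed oriented genus-$g$ surfaces. Since the Euler numbers agree, $\psi$ lifts to a principal bundle isomorphism $\Psi$ between the two circle bundles, with their respective actions. Pull back $\eta'$ by $\Psi$: it becomes a connection form on the $+$ bundle with positive curvature. The linear path of connection forms $(1-s)\eta+s\Psi^*\eta'$ consists of contact forms, since the curvatures are positive area forms and convex combinations stay positive. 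Gray stability then gives an isotopy, and composing with $\Psi$ yields $\tau_0$. Because $\Psi$ is equivariant, it sends the positive fiber $t_+$ to $t_-$. The Gray isotopy is homotopic to the identity, so it fixes free homotopy classes. Hence $(\tau_0)_*[t_+]=[t_-]$, and coorientation is preserved because the Reeb fields correspond.
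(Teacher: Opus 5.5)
Your outline is correct and follows the paper's proof essentially step for step:
\begin{itemize}
\item approximate $\alpha_{+,c}$ by $2\eta_+$, with $\eta_+$ a positive-curvature connection form for the generator $T_+=-U_+$;
\item apply openness of the contact condition and Gray stability;
\item read off the Euler number $-\langle c_1(N),[D]\rangle=-n$ from the curvature convention;
\item use $\alpha_{-,c}=-\alpha_{+,c}$ for (iv)--(v);
\item for (vi), lift an orientation-preserving map $D_+\to D_-$ to a positive-fiber-preserving bundle isomorphism and interpolate connection forms.
\end{itemize}
The one place the paper is sharper is the approximation step, which your proposal asserts rather than proves. In a merely smooth tubular chart, $w$ is not $J$-holomorphic, so $d^c_J(-\log|w|^2)=-2\,d\theta$ holds only up to a term $\tfrac{2\,dr}{r}\circ(J-J_0)$, with $J_0$ the complex structure of the total space of $N$. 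A naive estimate bounds this term without making it small, so it needs a separate argument. The paper avoids this entirely: it uses the exact identity $d^c_J\Phi=-2\widehat s^{\,*}\vartheta_L$ together with the real oriented blow-up of $S$ along $D$ to obtain $\iota_c^*\alpha_{+,c}\to 2\eta_+$ in $C^\infty$.
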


\begin{proof}
Let $N=N_{D/S}$ be the complex normal line bundle, let
$\pi_L:L\to S$ be the bundle projection, and let $z:S\to L$ be the zero
section.  Along $D$ one has $s=z$.  Hence
\[
 (ds-dz)|_D:TS|_D\longrightarrow TL|_{z(D)}
\]
takes values in the vertical tangent bundle
\[
 \ker(d\pi_L)|_{z(D)}\cong L|_D
\]
and vanishes on $TD$.  It therefore descends to a canonical complex bundle
map
\[
 \delta=d^\perp s:N\longrightarrow L|_D.
\]
Transversality of $s$ says exactly that $\delta$ is an isomorphism.  Since
$s$ is holomorphic, $\delta$ is a holomorphic line-bundle isomorphism.

\smallskip
\noindent\emph{Proof of \textup{(i)}.}
Choose a Hermitian metric on $N$, a tubular exponential map
\[
 \exp:N_\varepsilon\longrightarrow S,
\]
and a Hermitian connection on $L$.  For
$x\in D$, $v\in\mathbb S(N_x)$, and $r\geq0$ sufficiently small, let
\[
 P_{x,v,r}:L_{\exp_x(rv)}\longrightarrow L_x
\]
denote parallel transport along the radial path
\[
 t\longmapsto\exp_x(tv),\qquad 0\leq t\leq r.
\]
By Taylor's formula with integral remainder,
\begin{equation}\label{eq:normal-Taylor}
 P_{x,v,r}\bigl(s(\exp_x(rv))\bigr)
 =
 r\delta_x(v)+r^2R(x,v,r),
\end{equation}
where $R$ depends smoothly on $(x,v,r)$.  Since the connection is Hermitian,
$P_{x,v,r}$ is unitary.  Hence, setting
\[
 A(x,v,r)
 =
 \bigl\|\delta_x(v)+rR(x,v,r)\bigr\|_{h_x}^2,
\]
we obtain
\[
 \norm{s(\exp_x(rv))}_h^2
 =
 r^2A(x,v,r).
\]
The function $A$ is smooth and satisfies
\[
 A(x,v,0)=\norm{\delta_x(v)}_{h_x}^2>0.
\]
Since $\mathbb S(N)$ is compact and $\delta$ is an isomorphism, there is
$m>0$ such that
\[
 A(x,v,0)\geq m
\]
for every $x\in D$ and $v\in\mathbb S(N_x)$.
By uniform continuity of $A$ on
$\mathbb S(N)\times[0,\varepsilon_0]_r$, there exists
$0<\varepsilon\leq\varepsilon_0$ such that
\[
 A(x,v,r)\geq\frac{m}{2}
\]
for all $x\in D$, $v\in\mathbb S(N_x)$, and
$0\leq r<\varepsilon$.
Moreover, compactness implies that $\partial_rA$ is uniformly bounded on
this set.  Decreasing $\varepsilon$ once more if necessary, we therefore have
\[
 2A(x,v,r)+r\,\partial_rA(x,v,r)>0
\]
for all $x\in D$, $v\in\mathbb S(N_x)$, and $0\leq r<\varepsilon$.
Consequently,
\[
 \frac{\partial}{\partial r}
 \norm{s(\exp_x(rv))}_h^2
 =
 r\bigl(2A(x,v,r)+r\,\partial_rA(x,v,r)\bigr)>0
\]
whenever
\[
 x\in D,\qquad
 v\in\mathbb S(N_x),\qquad
 0<r<\varepsilon.
\]
Thus
\[
 d\norm{s}_h^2\neq0
\]
throughout the punctured tubular neighborhood
\[
 U\setminus D
 =
 \{\exp_x(rv)\mid x\in D,\ v\in\mathbb S(N_x),\ 0<r<\varepsilon\}.
\]

Since $s^{-1}(0)=D$ and $S\setminus U$ is compact, there exists
$\mu>0$ such that
\[
 \norm{s}_h^2\geq\mu
 \qquad\text{on }S\setminus U.
\]
Choose $c_0$ sufficiently large that
\[
 e^{-c_0}<\mu.
\]
Then, for every $c\geq c_0$,
\[
 Y_c
 =
 \{\Phi=c\}
 =
 \{\norm{s}_h^2=e^{-c}\}
 \subset U\setminus D.
\]
On $Y_c$ one has
\[
 d\Phi
 =
 -\frac{d\norm{s}_h^2}{\norm{s}_h^2}
 \neq0.
\]
Thus every $c\geq c_0$ is a regular value of $\Phi$. This proves \textup{(i)}.

\smallskip
We now prepare the prequantization model.  Pull the metric $h|_D$ back by
$\delta$ to a Hermitian metric on $N$; then $\delta$ is unitary.

Let $\mathbb S(L)\subset L$ be the unit circle bundle, with projection still
denoted by $\pi_L$.  
On $L^\times=L\setminus z(S)$, let
\[
 v\in\Gamma(L^\times,\pi_L^*L)
\]
be the tautological section, defined by
\[
 v_\ell=\ell\in L_{\pi_L(\ell)}
       =(\pi_L^*L)_\ell
 \qquad (\ell\in L^\times).
\]
Thus
\[
 \norm{v}_h^2(\ell)=\norm{\ell}_{h_{\pi_L(\ell)}}^2.
\]
Put
\[
 \widetilde\vartheta_L
 =
 \frac12d_{J_L}^c\log\norm{v}_h^2,
 \qquad
 \vartheta_L
 =
 \widetilde\vartheta_L|_{\mathbb S(L)},
\]
where $J_L$ is the complex structure on the total space of $L$.  The form
$\widetilde\vartheta_L$ is invariant under multiplication by positive real
numbers, and $\vartheta_L$ is the real Chern connection form. 
If $U_L$ is the generator of fiberwise multiplication by
$e^{\sqrt{-1}\theta}$, then
\[
 \vartheta_L(U_L)=1,
 \qquad
 d\vartheta_L=-\pi_L^*\Omega_L,
 \qquad
 \left[\frac{\Omega_L}{2\pi}\right]=c_1(L),
\]
where $\Omega_L$ is the positive Chern curvature form.  Indeed, choose a local
holomorphic frame $e$ of $L$ over an open set $U\subset S$ and write
\[
 \norm{e}_h^2=e^{-\psi}.
\]
Using the corresponding fiber coordinate $w$ on $\pi_L^{-1}(U)$, the
tautological section is
\[
 v=w e,
\]
and hence
\[
 \norm{v}_h^2=|w|^2e^{-\pi_L^*\psi}.
\]
Writing $w=re^{\sqrt{-1}\theta}$ and using
\[
 \frac12d^c\log|w|^2=d\theta,
\]
we obtain
\[
 \widetilde\vartheta_L
 =
 \frac12d^c\log\norm{v}_h^2
 =
 d\theta-\frac12\pi_L^*d^c\psi.
\]
Since $U_L=\partial_\theta$, this gives
\[
 \widetilde\vartheta_L(U_L)=1,
\]
and therefore
\[
 \vartheta_L(U_L)=1.
\]
Moreover,
\[
 d\widetilde\vartheta_L
 =
 -\frac12\pi_L^*dd^c\psi
 =
 -\pi_L^*\Omega_L,
\]
where
\[
 \Omega_L
 =
 \sqrt{-1}\,\partial\bar\partial\psi
 =
 \frac12dd^c\psi.
\]
Restricting to $\mathbb S(L)$ yields
\[
 d\vartheta_L=-\pi_L^*\Omega_L.
\]
Finally, the Chern--Weil formula gives
\[
 \left[\frac{\Omega_L}{2\pi}\right]=c_1(L).
\]

Write
\[
 \pi:P=\mathbb S(N)\longrightarrow D
\]
for the unit normal circle bundle.  Transporting the connection by $\delta$
gives a form $\vartheta_N$ on $P$ satisfying
\[
 \vartheta_N(U_+)=1,
 \qquad
 d\vartheta_N=-\pi^*\Omega_N,
 \qquad
 \left[\frac{\Omega_N}{2\pi}\right]=c_1(N),
\]
where $\Omega_N$ is positive on $D_+$.

On $S\setminus D$, consider the normalized section
\[
 \widehat s
 =
 \frac{s}{\norm{s}_h}
 :
 S\setminus D\longrightarrow\mathbb S(L).
\]
Let
\[
 q:L^\times\longrightarrow\mathbb S(L),
 \qquad
 q(\ell)=\frac{\ell}{\norm{\ell}_h},
\]
be the fiberwise radial projection.  The local formula
\[
 \widetilde\vartheta_L
 =
 d\theta-\frac12\pi_L^*d^c\psi
\]
shows that $\widetilde\vartheta_L$ is independent of the radial coordinate
and has no radial component.  Hence
\[
 \widetilde\vartheta_L=q^*\vartheta_L.
\]
On the other hand, regarding the holomorphic section $s$ as a map
$S\setminus D\to L^\times$, one has
\[
 s^*\norm{v}_h^2=\norm{s}_h^2.
\]
Naturality of $d^c$ under the holomorphic map $s$ therefore gives
\[
 s^*\widetilde\vartheta_L
 =
 \frac12d_J^c\log\norm{s}_h^2
 =
 -\frac12d_J^c\Phi.
\]
Since
\[
 \widehat s=q\circ s,
\]
we conclude that
\begin{equation}\label{eq:intrinsic-angular-form}
 d_J^c\Phi
 =
 -2s^*\widetilde\vartheta_L
 =
 -2\widehat s^{\,*}\vartheta_L.
\end{equation}
\smallskip
\noindent\emph{Proof of \textup{(ii)} and \textup{(iii)}.}
Let
\[
 \widetilde S=[S;D]
\]
be the real oriented blow-up of $S$ along $D$, obtained by replacing each
point of $D$ by the circle of oriented rays in its real normal plane.  The
chosen Hermitian metric on $N$ identifies
\[
 \partial\widetilde S=\mathbb S(N)=P.
\]

The Taylor expansion \eqref{eq:normal-Taylor} shows that
$\norm{s}_h$ lifts to a smooth boundary defining function on
$\widetilde S$.  Indeed, in the coordinates $(x,v,r)$ induced by the tubular
neighborhood,
\[
 s(\exp_x(rv))
 =
 r\bigl(\delta_x(v)+rR(x,v,r)\bigr),
\]
so
\[
 \norm{s(\exp_x(rv))}_h
 =
 r\,a(x,v,r),
\]
where
\[
 a(x,v,r)
 =
 \norm{\delta_x(v)+rR(x,v,r)}_h
\]
is smooth and positive for $r$ sufficiently small.  The same formula gives
\[
 \frac{s(\exp_x(rv))}{\norm{s(\exp_x(rv))}_h}
 =
 \frac{\delta_x(v)+rR(x,v,r)}
 {\norm{\delta_x(v)+rR(x,v,r)}_h},
\]
so $\widehat s$ extends smoothly to $\partial\widetilde S$.  Its boundary
value is the unit circle bundle map
\[
 \mathbb S(\delta):
 \mathbb S(N)\longrightarrow\mathbb S(L|_D)
\]
induced by $\delta$.

It follows from \eqref{eq:intrinsic-angular-form} that $d_J^c\Phi$ extends
smoothly to $\widetilde S$, with boundary value
\[
 -2\vartheta_N.
\]
Put
\[
 T_+=-U_+,
 \qquad
 \eta_+=-\vartheta_N.
\]
Then
\[
 \eta_+(T_+)=1,
 \qquad
 d\eta_+=\pi^*\Omega_N,
\]
so $\eta_+$ is a positive-curvature Boothby--Wang contact form over $D_+$.

Since the lift of $\norm{s}_h$ is a boundary defining function, a collar of
$\partial\widetilde S$ identifies its sufficiently small positive level sets
with
\[
 Y_c
 =
 \{\norm{s}_h=e^{-c/2}\}.
\]
For $c$ sufficiently large, denote the resulting diffeomorphism by
\[
 \iota_c:P\longrightarrow Y_c.
\]
Smooth extension to the blow-up gives
\[
 \iota_c^*\alpha_{+,c}
 \longrightarrow
 2\eta_+
 \qquad\text{in }C^\infty
 \qquad(c\to\infty).
\]
The contact condition is open.  Hence, after increasing the lower bound on
$c$ if necessary, the straight-line path
\[
 (1-t)\,2\eta_+
 +
 t\,\iota_c^*\alpha_{+,c},
 \qquad
 0\leq t\leq1,
\]
consists of contact forms.  Gray stability \cite{Gray} therefore gives a
contactomorphism
\[
 (Y_c,\ker\alpha_{+,c})
 \cong
 (P,\ker\eta_+)
\]
isotopic to the collar identification.

By the Euler-class convention \eqref{eq:eulerconvention},
\[
 e(P\to D_+)
 =
 -\left[\frac{d\eta_+}{2\pi}\right]
 =
 -c_1(N),
\]
and therefore
\[
 \langle e(P\to D_+),[D_+]\rangle
 =
 -\langle c_1(N),[D_+]\rangle
 =
 -D^2
 =
 -n.
\]
This proves \textup{(ii)}.  Since
\[
 T_+=-U_+,
\]
the corresponding oriented fiber classes satisfy
\[
 t_+=u_+^{-1},
\]
proving \textup{(iii)}.

\smallskip
\noindent\emph{Proof of \textup{(iv)} and \textup{(v)}.}
For $J_-=-J$ one has
\[
 d_{J_-}^c=-d_{J_+}^c,
 \qquad
 \alpha_{-,c}=-\alpha_{+,c}.
\]
Both the base complex orientation and the complex-normal orientation reverse.
Thus, on the same underlying real circle bundle,
\[
 U_-=-U_+,
 \qquad
 T_-=-U_-=U_+=-T_+,
 \qquad
 \eta_-=-\eta_+.
\]
The form
\[
 d\eta_-=-d\eta_+
\]
is positive with respect to the reversed orientation of $D_-$.  Moreover,
\[
 \left\langle
 -\left[\frac{d\eta_-}{2\pi}\right],
 [D_-]
 \right\rangle
 =
 -n.
\]
The preceding blow-up and Gray-stability argument, now applied to $J_-$,
proves \textup{(iv)}, while $T_-=-T_+$ gives
\[
 [t_-]=[t_+^{-1}],
\]
proving \textup{(v)}.

For clarity, all orientation relations used later are collected here.  The
symbols $u_\pm$ and $t_\pm$ denote oriented free-homotopy classes, while
$U_\pm$ and $T_\pm$ denote their infinitesimal generators.
\begin{equation}\label{eq:orientationtable}
\begin{array}{c|c|c}
 & J_+=J & J_-=-J \\
\hline
\text{base complex orientation}
 & [D_+] & [D_-]=-[D_+] \\
\text{complex-normal generator}
 & U_+ & U_-=-U_+ \\
\text{positive contact generator}
 & T_+=-U_+ & T_-=-U_-=U_+=-T_+ \\
\text{fiber classes}
 & u_+=t_+^{-1} & u_-=t_-^{-1} \\
\text{Euler number in the }t_\pm\text{-orientation}
 & -n & -n
\end{array}
\end{equation}
The boundary-fiber orientation of the normal disk bundle is $u_\pm$; the
positive contact fiber $t_\pm$ is its opposite.  
 Reversing the
base and fiber orientations simultaneously leaves the evaluated Euler number
unchanged.

\smallskip
\noindent\emph{Proof of \textup{(vi)}.}
Choose an orientation-preserving diffeomorphism
\[
 D_+\longrightarrow D_-.
\]
By \textup{(ii)} and \textup{(iv)}, the two $t_\pm$-oriented circle bundles
have the same Euler number, so the base map lifts to a
positive-fiber-preserving bundle isomorphism.  After identifying the two circle bundles by this bundle isomorphism, regard
$\eta_+$ and $\eta_-$ as connection forms on a single oriented circle bundle
\[
 \pi:P\longrightarrow D
\]
with common positive vertical generator $T$.  We have
\[
 \eta_+(T)=\eta_-(T)=1,
 \qquad
 d\eta_+=\pi^*\Omega_+,
 \qquad
 d\eta_-=\pi^*\Omega_-,
\]
where $\Omega_+$ and $\Omega_-$ are positive area forms on the oriented
surface $D$.  For $0\leq t\leq1$, set
\[
 \eta_t=(1-t)\eta_++t\eta_-.
\]
Then
\[
 \eta_t(T)=1
\]
and
\[
 d\eta_t
 =
 \pi^*\Omega_t,
 \qquad
 \Omega_t=(1-t)\Omega_++t\Omega_-.
\]
Since the convex combination of two positive area forms on an oriented
surface is again positive, $\Omega_t$ is positive for every $t$.  Hence
\[
 \eta_t\wedge d\eta_t>0,
\]
so $\{\eta_t\}_{0\leq t\leq1}$ is a path of contact forms.
  Gray
stability, composed with the contactomorphisms obtained in (ii) and (iv), gives a
coorientation-preserving contactomorphism $\tau_0$ carrying $[t_+]$ to
$[t_-]$.  
\end{proof}
Applying Proposition~\ref{prop:boundarymodel} to $L=2K$ gives:

\begin{corollary}\label{cor:boundarynumbers}
The contact boundaries of the two Stein data of
Section~\ref{sec:stein} are contactomorphic to the Boothby--Wang
prequantization contact structure $\xi_{\mathrm{BW}}$ on the oriented circle
bundle $Y_{28,-36}$ over a surface of genus $28$ with Euler number $-36$.

\end{corollary}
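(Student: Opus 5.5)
This follows by specializing Proposition~\ref{prop:boundarymodel} to the data fixed in Sections~\ref{sec:algebraic} and~\ref{sec:stein}.

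\textbf{Checking the hypotheses.} We take $L=K_S^{\otimes2}$, with the Hermitian metric $h$ of positive Chern curvature chosen in Section~\ref{sec:stein}. This metric exists because $K_S$ is ample by Proposition~\ref{prop:KKsurface}(i). We take the section $s$ fixed at the end of Section~\ref{sec:algebraic}. Its zero set is the smooth connected curve $D$ of Lemma~\ref{lem:bicanonical}, and $s$ vanishes transversely along $D$ by the remark following that lemma. The self-intersection is $n=D^2=36>0$. All hypotheses of Proposition~\ref{prop:boundarymodel} therefore hold, with $\Phi=-\log\norm{s}_h^2$ exactly as in \eqref{eq:Phi}.

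\textbf{Matching the regular value.} The one point needing care is that $c$ in \eqref{eq:Xdef} was chosen to be a sufficiently large regular value. We enlarge it if necessary so that it exceeds the threshold $c_0$ from part (i) of the proposition, and so that the Gray-stability step in (ii) applies. With this choice, $Y=\partial X=\{\Phi=c\}=Y_c$. By \eqref{eq:conjugateforms} and \eqref{eq:initial-boundary-data}, the boundary forms satisfy $\alpha^0_\pm=d^c_{J_\pm}\rho_0|_Y=d^c_{J_\pm}\Phi|_{Y_c}=\alpha_{\pm,c}$, since $\rho_0$ and $\Phi$ differ by a constant.

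\textbf{Reading off the conclusion.} Parts (ii) and (iv) of Proposition~\ref{prop:boundarymodel} then identify $(Y,\xi_+)$ and $(Y,\xi_-)$ with Boothby--Wang structures. These live on circle bundles of Euler number $-36$ over $D_+$ and $D_-$ respectively. The genus of $D$ is $28$ by Lemma~\ref{lem:bicanonical}, and reversing the orientation of the base does not change the genus. By the classification of circle bundles over closed oriented surfaces through their Euler number, both bundles are the single oriented bundle $Y_{28,-36}$.

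The Boothby--Wang contact structure on $Y_{28,-36}$ is well defined up to contactomorphism. Any two positive-curvature connection forms on the same oriented bundle are joined by the convex-combination path used in the proof of (vi), and Gray stability then gives a contactomorphism between them. This completes the identification.

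I expect no genuine obstacle here. The only point needing attention is the compatibility of the single regular value $c$ used to define $X$ with the largeness thresholds in Proposition~\ref{prop:boundarymodel}, which we handle by increasing $c$ as above.
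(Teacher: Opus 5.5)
Your proposal is correct and follows the paper's route: the corollary is exactly Proposition~\ref{prop:boundarymodel} applied to $L=2K$, with $n=D^2=36$ and $g(D)=28$ taken from Lemma~\ref{lem:bicanonical}. Your additional remarks only spell out steps the paper leaves implicit, namely taking $c$ above the thresholds of the proposition, and identifying the two Boothby--Wang models on $Y_{28,-36}$ via the Euler-number classification together with the convex-combination and Gray argument from part~(vi).
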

Let $t$ denote the oriented positive fiber class of $(Y_{28,-36}, \xi_{\mathrm{BW}})$.
By Proposition~\ref{prop:boundarymodel}, choose coorientation-preserving
contactomorphisms
\[
 \kappa_\pm:(Y,\xi_\pm,t_\pm)
 \longrightarrow
 (Y_{28,-36},\xi_{\mathrm{BW}},t)
\]
preserving the oriented positive fiber class.

\section{Strict normalization of the boundary contact forms}\label{sec:normalization}

Recall the initial data from Section~\ref{sec:stein}:
\[
 \lambda_\pm^0=d_{J_\pm}^c\rho_0,
 \qquad
 \alpha_\pm^0=\lambda_\pm^0|_Y,
 \qquad
 \xi_\pm=\ker\alpha_\pm^0.
\]
  Fix a positive-curvature connection form
$\eta$ for $\xi_{\mathrm{BW}}$ whose positively oriented Reeb orbits are the
fibers $t$.  We shall choose the final boundary forms to be
$\kappa_\pm^*\eta$.  This simultaneously gives strict normalization and
makes the oriented free-homotopy classes $[t_\pm]$ the classes of the
characteristic circles \footnote{Let $(X^{2n},\omega)$ be a symplectic manifold with boundary
$Y=\partial X$.  At each point $p\in Y$, the tangent space $T_pY$ is a
hyperplane in the symplectic vector space $(T_pX,\omega_p)$.  Its symplectic
orthogonal
\[
 (T_pY)^{\omega}
 =
 \{v\in T_pX\mid \omega_p(v,w)=0
 \text{ for every }w\in T_pY\}
\]
is one-dimensional, since
\[
 \dim (T_pY)^{\omega}
 =
 \dim T_pX-\dim T_pY
 =
 1.
\]
Moreover, every one-dimensional subspace of a symplectic vector space is
isotropic, so
\[
 (T_pY)^{\omega}
 \subset
 \bigl((T_pY)^{\omega}\bigr)^{\omega}
 =
 T_pY.
\]
It follows that
\[
 \ker(\omega_p|_{T_pY})
 =
 T_pY\cap(T_pY)^{\omega}
 =
 (T_pY)^{\omega},
\]
and hence $\omega|_{TY}$ has a one-dimensional kernel.  These kernels form
the \emph{characteristic line field} of $Y$.  Its integral curves are called
the \emph{characteristic curves}; a closed integral curve is called a
\emph{characteristic circle}.  If $\omega=d\lambda$ and
$\alpha=\lambda|_Y$ is a contact form, then
\[
 \ker(d\alpha)=\ker(\omega|_{TY}),
\]
so the characteristic circles are precisely the closed Reeb orbits of
$\alpha$, with their natural orientation.} of the final exact symplectic forms.

We will construct final defining functions $\rho_\pm$ and set
\[
 \lambda_\pm=d_{J_\pm}^c\rho_\pm,
 \qquad
 \omega_\pm=d\lambda_\pm,
 \qquad
 \alpha_\pm=\lambda_\pm|_Y.
\]
Only the defining functions are changed.  The complex structures $J_\pm$, the
cooriented contact hyperplane fields $\xi_\pm$, the first Chern classes, and the
canonical $\SpinC$ structures therefore remain those already fixed in
Section~\ref{sec:stein}.

\begin{lemma}[Rescaling a strictly pseudoconvex defining function]
\label{lem:definingrescale}
Let $(X,J)$ be a compact Stein domain with smooth boundary $Y$, and let
$\rho:X\to\R$ be a global strictly $J$-plurisubharmonic defining function
with $\rho\leq0$:
\[
 Y=\{\rho=0\},
 \qquad
 d\rho|_Y\neq0.
\]
Put $\alpha=d_J^c\rho|_Y$.  For every smooth function
$q:Y\to\R_{>0}$, there is a global strictly $J$-plurisubharmonic defining
function $\widetilde\rho$ with the same boundary such that
\[
 d_J^c\widetilde\rho|_Y=q\alpha.
\]
The two defining functions may be joined through strictly plurisubharmonic
defining functions whose boundary zero set remains $Y$ and whose boundary
differentials never vanish.
\end{lemma}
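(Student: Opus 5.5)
The plan is to build $\widetilde\rho$ in two pieces: a local modification of $\rho$ in a collar of $Y$ that realizes the prescribed boundary form, and a convex reparametrization $\psi(\rho)$ in the interior, glued by a regularized maximum. The starting observation is that for any smooth $g$ one has $d^c_J(g\rho)=g\,d^c_J\rho+\rho\,d^c_Jg$, and this restricts on $Y=\{\rho=0\}$ to $g\alpha$. Likewise $d^c_J(\rho^2)=2\rho\,d^c_J\rho$ vanishes on $Y$.

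\textbf{Step 1: the collar function.} Choose a smooth positive extension $\widehat q$ of $q$ to a collar $\{-3\varepsilon\le\rho\le0\}$ and set
\[
 \rho_1=\widehat q\,\rho+K\rho^2
\]
there, with $K$ a large constant. Then $\rho_1|_Y=0$ and $d\rho_1|_Y=q\,d\rho|_Y\neq0$. By the observation above, $d^c_J\rho_1|_Y=q\alpha$.

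\textbf{Step 2: strict plurisubharmonicity of $\rho_1$ near $Y$.} Expanding,
\[
 dd^c\rho_1=\widehat q\,dd^c\rho+d\widehat q\wedge d^c\rho+d\rho\wedge d^c\widehat q+\rho\,dd^c\widehat q+2K\,d\rho\wedge d^c\rho+2K\rho\,dd^c\rho .
\]
At a point of $Y$, split $v=v_\xi+v_n$, where $v_\xi\in\xi=\ker d\rho\cap\ker d^c\rho$ and $v_n$ lies in the complex normal line.
\begin{itemize}
\item On $\xi$ the mixed terms $d\widehat q\wedge d^c\rho$ and $d\rho\wedge d^c\widehat q$ vanish, so the Levi form there is $\widehat q\,dd^c\rho(v,Jv)>0$.
\item The term $2K\,d\rho\wedge d^c\rho$ evaluates to $2K\bigl(d\rho(v)^2+d\rho(Jv)^2\bigr)$. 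This is positive definite on the normal line, and for $K$ large it absorbs the mixed terms by Cauchy--Schwarz.
\end{itemize}
By compactness and openness, $\rho_1$ is strictly plurisubharmonic on a thinner collar $\{-2\varepsilon\le\rho\le0\}$ once $\varepsilon$ is small. Shrinking $\varepsilon$ further, we may also assume $\rho_1<0$ and that $\rho_1$ is uniformly comparable to $\widehat q\rho$ there.

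\textbf{Step 3: the interior function and the gluing.} This is the step I expect to be the main obstacle, because one must get the inequalities in the right direction. Choose numbers $M>\max\widehat q$ and $0<\eta\ll\varepsilon$, and take $\psi$ smooth, convex and increasing with the following properties:
\begin{itemize}
\item $\psi(t)=Mt$ on $[-\eta,0]$;
\item $\psi'$ is tiny for $t\le-\varepsilon$, so that $\psi(t)\approx-\eta M$ there.
\end{itemize}
Since $\psi$ is convex and increasing, $\psi(\rho)$ is strictly plurisubharmonic on all of $X$.

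Near $Y$ we have $\psi(\rho)=M\rho<\widehat q\rho\approx\rho_1$. For $\rho\le-\varepsilon$ we have $\psi(\rho)\approx-\eta M>-2\varepsilon\min\widehat q\gtrsim\rho_1$ when $\eta$ is small.

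Let $\widetilde\rho$ be a regularized maximum of $\rho_1$ and $\psi(\rho)$ on the collar, and set $\widetilde\rho=\psi(\rho)$ on $\{\rho\le-\varepsilon\}$. Because the regularized maximum of strictly plurisubharmonic functions is strictly plurisubharmonic, $\widetilde\rho$ is smooth, strictly plurisubharmonic, negative in the interior, and equal to $\rho_1$ near $Y$. Hence $d^c_J\widetilde\rho|_Y=q\alpha$.

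\textbf{Step 4: the path.} Take $\rho_t=(1-t)\rho+t\widetilde\rho$. Strict plurisubharmonicity is a convex condition for fixed $J$, so each $\rho_t$ is strictly plurisubharmonic. Each $\rho_t$ is $\le0$ and vanishes exactly on $Y$. On $Y$,
\[
 d\rho_t=\bigl((1-t)+tq\bigr)\,d\rho\neq0 ,
\]
so the boundary differentials never vanish along the path.
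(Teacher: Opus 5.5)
Your Steps 1, 2 and 4 are sound. The straight-line path in Step 4 is a legitimate shortcut compared with the paper's two-stage path, once $\widetilde\rho$ really is a defining function.

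The gap is in Step 3, exactly at $Y$. Since $\psi(\rho)=M\rho$ near $Y$, both $\rho_1$ and $\psi(\rho)$ vanish on $Y$. Your inequality $\psi(\rho)<\rho_1$ is strict only off $Y$, and
\[
 \rho_1-\psi(\rho)=(M-\widehat q)\lvert\rho\rvert+K\rho^2\longrightarrow0\qquad(\rho\to0).
\]
A regularized maximum $\operatorname{rmax}_\sigma(f,g)=\tfrac12\bigl(f+g+\chi_\sigma(f-g)\bigr)$ with a fixed smoothing parameter $\sigma>0$ agrees with $\max\{f,g\}$ only where $\lvert f-g\rvert\geq\sigma$. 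So it does not equal $\rho_1$ on any neighborhood of $Y$. Concretely, $\chi_\sigma$ is even, smooth and convex with $\chi_\sigma(t)=\lvert t\rvert$ for $\lvert t\rvert\geq\sigma$, hence $\chi_\sigma(0)>0$ and $\chi_\sigma'(0)=0$. On $Y$ your glued function therefore satisfies $\widetilde\rho=\chi_\sigma(0)/2>0$ and $d^c_J\widetilde\rho|_Y=\tfrac12(q+M)\alpha$. It is neither a defining function for $Y$ nor does it realize $q\alpha$, and your claims ``negative in the interior'' and ``equal to $\rho_1$ near $Y$'' both fail.

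What is missing is a uniform separation of the two functions all the way up to $Y$. The paper glues its collar family $\rho_s$ to $a_1\rho-b$ with a constant offset $b>0$. Then $\rho_s-(a_1\rho-b)=b$ on $Y$, so with smoothing parameter below $b/2$ the regularized maximum is exactly $\rho_s$ on a collar. Meanwhile the small slope $0<a_1<\min\bigl((1-s)+se^H\bigr)$ makes $a_1\rho-b$ win at the inner edge of the collar.

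In your setup, either of two changes repairs the construction:
\begin{itemize}
\item replace $\psi(\rho)$ by $\psi(\rho)-b$ for small $b>0$; or
\item apply the regularized maximum only away from $Y$, with $\sigma$ below the positive lower bound of $\rho_1-\psi(\rho)$ on a band $\{-\eta\le\rho\le-\eta/4\}$, and keep $\widetilde\rho=\rho_1$ between that band and $Y$.
\end{itemize}

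You must then also check something your sketch omits. Since $\operatorname{rmax}_\sigma$ exceeds $\max$ by up to $\sigma/2$, negativity in the transition region (which is now bounded away from $Y$) requires $\sigma$ to be smaller than twice $\min\lvert\max\{\rho_1,\psi(\rho)-b\}\rvert$ there. This is the paper's condition $\varepsilon<2\mu$.

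A minor point: ``$-2\varepsilon\min\widehat q\gtrsim\rho_1$ for $\rho\le-\varepsilon$'' is false as written at $\rho=-\varepsilon$. What you actually need is $\eta M$ small compared with $\varepsilon\min\widehat q$.
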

\begin{proof}
Extend
\[
 H=\log q
\]
smoothly to a collar of $Y$.  On this collar set
\begin{equation}\label{eq:rescaledrho}
 \rho_1=e^H\rho+M\rho^2,
\end{equation}
where $M>0$ will be chosen below.

\smallskip
\noindent\emph{Step 1: the boundary $1$-jet.}
The Leibniz rule gives
\[
 d_J^c\rho_1
 =
 e^H d_J^c\rho
 +\rho e^H d_J^cH
 +2M\rho\,d_J^c\rho.
\]
On the boundary
\[
 Y=\{\rho=0\},
\]
the last two terms vanish.  Hence
\begin{equation}\label{eq:boundaryjetrescale}
 d_J^c\rho_1|_Y
 =
 (e^Hd_J^c\rho)|_Y
 =
 q\alpha,
\end{equation}
because $H|_Y=\log q$ and
\[
 \alpha=d_J^c\rho|_Y.
\]

\smallskip
\noindent\emph{Step 2: strict plurisubharmonicity of $\rho_1$ near $Y$.}
Fix a Hermitian metric on $TX$.  For $p\in Y$, put
\[
 \xi_p=T_pY\cap J(T_pY)
\]
and let
\[
 N_p=\xi_p^\perp.
\]
Since the metric is Hermitian and $\xi_p$ is $J$-invariant, $N_p$ is also
$J$-invariant, and
\[
 T_pX=\xi_p\oplus N_p.
\]
The spaces $N_p$ vary smoothly with $p$.

For a smooth real-valued function $u$ on $X$, denote its Levi quadratic
form by
\[
 \operatorname{Lev}_u:TX\longrightarrow\mathbb R,
 \qquad
 \operatorname{Lev}_u(v)
 :=
 dd_J^cu(v,Jv).
\]
Thus $u$ is strictly $J$-plurisubharmonic precisely when
\[
 \operatorname{Lev}_u(v)>0
\]
for every nonzero $v\in TX$.

Before restricting to $Y$, the Leibniz rule gives
\begin{align}
 dd_J^c(e^H\rho)
 &=
 e^Hdd_J^c\rho
 +e^H\bigl(
 dH\wedge d_J^c\rho
 +d\rho\wedge d_J^cH
 \bigr)
 \notag\\
 &\qquad
 +\rho e^H
 \bigl(
 dd_J^cH+dH\wedge d_J^cH
 \bigr),
 \label{eq:ddceHrho}\\
 dd_J^c(M\rho^2)
 &=
 2M\,d\rho\wedge d_J^c\rho
 +2M\rho\,dd_J^c\rho.
 \label{eq:ddcrho2}
\end{align}
At $p\in Y$, all terms containing an explicit factor of $\rho$ vanish.

For $p\in Y$, define
\[
 G_p(v,w)
 :=
 (dd_J^c\rho_1)_p(v,Jw),
 \qquad
 v,w\in T_pX.
\]
Since $dd_J^c\rho_1$ is a real $(1,1)$-form, $G_p$ is symmetric, and
\[
 \operatorname{Lev}_{\rho_1}(v)=G_p(v,v).
\]
Writing
\[
 v=v_T+v_N,
 \qquad
 v_T\in\xi_p,\quad v_N\in N_p,
\]
we have
\[
 \operatorname{Lev}_{\rho_1}(v)
 =
 G_p(v_T,v_T)
 +2G_p(v_T,v_N)
 +G_p(v_N,v_N).
\]
We estimate these three terms separately.

\smallskip
\noindent\emph{Step 2(a): the tangential block.}
If $v_T\in\xi_p$, then both $v_T$ and $Jv_T$ belong to $T_pY$.  Since
$\rho|_Y=0$,
\[
 d\rho(v_T)=0,
 \qquad
 d_J^c\rho(v_T)=-d\rho(Jv_T)=0.
\]
Thus
\[
 d\rho|_{\xi_p}=0,
 \qquad
 d_J^c\rho|_{\xi_p}=0.
\]
Consequently, the terms
\[
 e^H\bigl(
 dH\wedge d_J^c\rho
 +d\rho\wedge d_J^cH
 \bigr)
\]
in \eqref{eq:ddceHrho}, as well as
\[
 2M\,d\rho\wedge d_J^c\rho,
\]
vanish on $\xi_p\times\xi_p$.  Hence
\[
 G_p(v_T,v_T)
 =
 e^{H(p)}
 (dd_J^c\rho)_p(v_T,Jv_T).
\]
Since $\rho$ is strictly $J$-plurisubharmonic and $e^H>0$, this is
positive for every nonzero $v_T\in\xi_p$.  The unit sphere bundle of
$\xi\to Y$ is compact, so there exists a constant $a_0>0$ such that
\begin{equation}\label{eq:rescale-tangential}
 G_p(v_T,v_T)
 \geq
 a_0\norm{v_T}^2
\end{equation}
for every $p\in Y$ and $v_T\in\xi_p$.

\smallskip
\noindent\emph{Step 2(b): the mixed block.}
For $v_T\in\xi_p$ and $v_N\in N_p$, the identities
\[
 d\rho(v_T)=d_J^c\rho(v_T)=0
\]
imply
\[
 (d\rho\wedge d_J^c\rho)(v_T,Jv_N)=0.
\]
Thus
\[
 2M\,d\rho\wedge d_J^c\rho
\]
does not contribute to the mixed block.  Since $\rho=0$ on $Y$, we have
\[
\begin{aligned}
 G_p(v_T,v_N)
 &=
 e^{H(p)}
 (dd_J^c\rho)_p(v_T,Jv_N)\\
 &\quad
 +e^{H(p)}
 \bigl(
 dH\wedge d_J^c\rho
 +d\rho\wedge d_J^cH
 \bigr)_p(v_T,Jv_N).
\end{aligned}
\]
In particular, this expression is independent of $M$.  It depends
continuously on $(p,v_T,v_N)$ on the compact set
\[
 \left\{
 (p,v_T,v_N)
 \,\middle|\,
 p\in Y,\ 
 v_T\in\xi_p,\ \norm{v_T}=1,\ 
 v_N\in N_p,\ \norm{v_N}=1
 \right\}.
\]
Hence there exists a constant $B\geq0$, independent of both $p$ and $M$,
such that
\begin{equation}\label{eq:rescale-mixed}
 \bigl|G_p(v_T,v_N)\bigr|
 \leq
 B\norm{v_T}\norm{v_N}
\end{equation}
for every $p\in Y$, $v_T\in\xi_p$, and $v_N\in N_p$.

\smallskip
\noindent\emph{Step 2(c): the normal block.}
Put
\[
 Q_p(v_N)
 :=
 (d\rho\wedge d_J^c\rho)_p(v_N,Jv_N).
\]
By the convention
\[
 d_J^c\rho(v)=-d\rho(Jv),
\]
we have
\[
 Q_p(v_N)
 =
 d\rho(v_N)^2+d\rho(Jv_N)^2.
\]
We claim that
\[
 Q_p(v_N)>0
 \qquad
 \text{for every }v_N\in N_p\setminus\{0\}.
\]
Indeed, if $Q_p(v_N)=0$, then
\[
 d\rho(v_N)=d\rho(Jv_N)=0.
\]
Since $Y=\{\rho=0\}$ is a regular level set,
\[
 T_pY=\ker d\rho_p.
\]
Hence
\[
 v_N\in T_pY,
 \qquad
 Jv_N\in T_pY.
\]
The second inclusion implies
\[
 v_N=-J(Jv_N)\in J(T_pY),
\]
so
\[
 v_N\in T_pY\cap J(T_pY)=\xi_p.
\]
Since also $v_N\in N_p$ and
\[
 T_pX=\xi_p\oplus N_p,
\]
we conclude that $v_N=0$, a contradiction.

The unit sphere bundle of $N\to Y$ is compact.  Therefore there exists
$c_0>0$ such that
\begin{equation}\label{eq:rescale-normal-positive}
 Q_p(v_N)
 \geq
 c_0\norm{v_N}^2
\end{equation}
for every $p\in Y$ and $v_N\in N_p$.

The remaining contribution to the normal block is
\[
 (dd_J^c(e^H\rho))_p(v_N,Jv_N).
\]
The function
\[
 (p,v_N)
 \longmapsto
 (dd_J^c(e^H\rho))_p(v_N,Jv_N)
\]
is continuous on the compact unit sphere bundle
\[
 \mathbb S(N|_Y)
 =
 \left\{
 (p,v_N)
 \,\middle|\,
 p\in Y,\ 
 v_N\in N_p,\ 
 \norm{v_N}=1
 \right\}.
\]
Let $m$ be its minimum and set
\[
 C=\max\{0,-m\}.
\]
By homogeneity,
\[
 (dd_J^c(e^H\rho))_p(v_N,Jv_N)
 \geq
 -C\norm{v_N}^2
\]
for every $p\in Y$ and $v_N\in N_p$.  Since
\[
 dd_J^c(M\rho^2)|_Y
 =
 2M\,d\rho\wedge d_J^c\rho,
\]
we obtain
\begin{equation}\label{eq:rescale-normal}
 G_p(v_N,v_N)
 \geq
 (2Mc_0-C)\norm{v_N}^2.
\end{equation}

Combining
\eqref{eq:rescale-tangential},
\eqref{eq:rescale-mixed}, and
\eqref{eq:rescale-normal}, we find
\[
\begin{aligned}
 \operatorname{Lev}_{\rho_1}(v)
 &=
 G_p(v_T,v_T)
 +2G_p(v_T,v_N)
 +G_p(v_N,v_N)\\
 &\geq
 a_0\norm{v_T}^2
 -2B\norm{v_T}\norm{v_N}
 +(2Mc_0-C)\norm{v_N}^2.
\end{aligned}
\]
Completing the square gives
\[
\begin{aligned}
 \operatorname{Lev}_{\rho_1}(v)
 &\geq
 a_0
 \left(
 \norm{v_T}-\frac{B}{a_0}\norm{v_N}
 \right)^2\\
 &\quad+
 \left(
 2Mc_0-C-\frac{B^2}{a_0}
 \right)
 \norm{v_N}^2.
\end{aligned}
\]
Choose $M>0$ so large that
\[
 2Mc_0-C-\frac{B^2}{a_0}>0,
\]
equivalently,
\[
 M>
 \frac{1}{2c_0}
 \left(
 C+\frac{B^2}{a_0}
 \right).
\]
Then
\[
 \operatorname{Lev}_{\rho_1}(v)>0
\]
for every $p\in Y$ and every nonzero $v\in T_pX$.

Fix this value of $M$.  The Levi quadratic form defines a continuous
function
\[
 \operatorname{Lev}_{\rho_1}:TX\longrightarrow\mathbb R.
\]
Its restriction to
\[
 \mathbb S(TX|_Y)
 =
 \{v\in T_pX\mid p\in Y,\ \norm{v}=1\}
\]
is strictly positive.  Since $\mathbb S(TX|_Y)$ is compact, this
restriction has a positive minimum.  Hence, after shrinking the collar of
$Y$, we may assume that
\[
 \operatorname{Lev}_{\rho_1}(v)>0
\]
for every nonzero tangent vector based in that collar.  Thus $\rho_1$ is
strictly $J$-plurisubharmonic on the collar.

\smallskip
\noindent\emph{Step 3: a path of defining functions on a common collar.}
For $0\leq s\leq1$, put
\[
 \rho_s=(1-s)\rho+s\rho_1.
\]
On the collar where both $\rho$ and $\rho_1$ are strictly
$J$-plurisubharmonic, every $\rho_s$ is strictly
$J$-plurisubharmonic, since
\[
 dd_J^c\rho_s
 =
 (1-s)dd_J^c\rho+s\,dd_J^c\rho_1.
\]
Moreover,
\[
 \rho_s
 =
 \rho\bigl((1-s)+se^H+sM\rho\bigr).
\]

Fix the value of $M$ chosen in Step~2.  On $Y\times[0,1]$ we have
\[
 (1-s)+se^H>0.
\]
Since $Y\times[0,1]$ is compact, there is
\[
 m_0:=
 \min_{Y\times[0,1]}
 \bigl((1-s)+se^H\bigr)>0.
\]
By continuity, after shrinking the collar, we may assume that
\[
 (1-s)+se^H\geq\frac{m_0}{2}
\]
throughout the collar for every $s\in[0,1]$.  Since $\rho=0$ on $Y$ and
$M$ is fixed, shrinking the collar once more gives
\[
 M|\rho|<\frac{m_0}{4}.
\]
Hence
\[
 (1-s)+se^H+sM\rho
 \geq
 \frac{m_0}{4}>0
\]
throughout the collar for every $s\in[0,1]$.

Consequently each $\rho_s$ is negative on the interior side, has zero set
exactly $Y$, and satisfies
\[
 d\rho_s|_Y
 =
 \bigl((1-s)+se^H\bigr)d\rho|_Y
 \neq0.
\]
Thus $\{\rho_s\}_{s\in[0,1]}$ is a family of strictly
$J$-plurisubharmonic defining functions on a common boundary collar.

\smallskip
\noindent\emph{Step 4: preparation for the global extension.}
Choose a closed collar
\[
 U_0=\{-\delta_0\leq\rho\leq0\}
\]
on which all the conclusions of Step~3 hold.  Set
\[
 c_*
 :=
 \min_{U_0\times[0,1]}
 \bigl((1-s)+se^H\bigr)>0
\]
and choose
\[
 0<a_1<c_*.
\]
Choose
\[
 0<\delta\leq\delta_0
\]
so small that
\[
 M\delta<\frac{c_*-a_1}{4}.
\]
Replacing $U_0$ by the smaller collar
\[
 U=\{-\delta\leq\rho\leq0\},
\]
choose
\[
 0<b<
 \frac{c_*-a_1}{4}\,\delta.
\]
The function
\[
 a_1\rho-b
\]
is globally defined and strictly $J$-plurisubharmonic.

On the inner boundary $\{\rho=-\delta\}$, we have
\begin{align*}
 (a_1\rho-b)-\rho_s
 &=
 -a_1\delta-b
 +\delta\bigl((1-s)+se^H-sM\delta\bigr)\\
 &=
 \delta\bigl((1-s)+se^H-a_1-sM\delta\bigr)-b.
\end{align*}
Since
\[
 (1-s)+se^H\geq c_*
\]
and
\[
 sM\delta\leq M\delta
 <\frac{c_*-a_1}{4},
\]
we obtain
\begin{align*}
 (a_1\rho-b)-\rho_s
 &\geq
 \delta\bigl(c_*-a_1-sM\delta\bigr)-b\\
 &>
 \delta\left(
 c_*-a_1-\frac{c_*-a_1}{4}
 \right)-b\\
 &=
 \frac{3(c_*-a_1)}{4}\,\delta-b\\
 &>
 \frac{3(c_*-a_1)}{4}\,\delta
 -
 \frac{c_*-a_1}{4}\,\delta\\
 &=
 \frac{c_*-a_1}{2}\,\delta
 >0.
\end{align*}
Thus
\[
 a_1\rho-b>\rho_s
\]
on the inner boundary, uniformly in $s$.

At the outer boundary $Y=\{\rho=0\}$,
\[
 \rho_s-(a_1\rho-b)=b>0.
\]
Since $Y\times[0,1]$ is compact, there is
\[
 0<\delta_{\mathrm{out}}<\delta
\]
such that on
\[
 U_{\mathrm{out}}
 :=
 \{-\delta_{\mathrm{out}}\leq\rho\leq0\}
\]
one has
\[
 \rho_s-(a_1\rho-b)\geq\frac{b}{2}
\]
for every $s\in[0,1]$.

Similarly, compactness of
\[
 \{\rho=-\delta\}\times[0,1]
\]
gives
\[
 0<\delta_{\mathrm{in}}<\delta
\]
such that on
\[
 U_{\mathrm{in}}
 :=
 \{-\delta\leq\rho\leq-\delta+\delta_{\mathrm{in}}\}
\]
one has
\[
 (a_1\rho-b)-\rho_s
 \geq
 \frac{c_*-a_1}{4}\,\delta
\]
for every $s\in[0,1]$.

After shrinking $U_{\mathrm{out}}$ and $U_{\mathrm{in}}$ if necessary, assume
that they are disjoint, and set
\[
 K
 :=
 \overline{
 U\setminus
 \bigl(U_{\mathrm{out}}\cup U_{\mathrm{in}}\bigr)
 }.
\]
Then $K$ is a compact subset of $X\setminus Y$.  Since
\[
 \rho_s<0,
 \qquad
 a_1\rho-b<0
\]
on $K$, the continuous function
\[
 (x,s)
 \longmapsto
 \max\{\rho_s(x),a_1\rho(x)-b\}
\]
is strictly negative on $K\times[0,1]$.  Hence there is $\mu>0$ such that
\[
 \max\{\rho_s,a_1\rho-b\}
 \leq-\mu
\]
on $K$ for every $s\in[0,1]$.

\smallskip
\noindent\emph{Step 5: regularized maximum and globalization.}
Choose
\[
 0<\varepsilon<
 \min\left\{
 \frac{b}{2},
 \frac{c_*-a_1}{4}\,\delta,
 2\mu
 \right\}.
\]
Then
\[
 \rho_s-(a_1\rho-b)>\varepsilon
 \qquad\text{on }U_{\mathrm{out}},
\]
and
\[
 (a_1\rho-b)-\rho_s>\varepsilon
 \qquad\text{on }U_{\mathrm{in}}
\]
for every $s\in[0,1]$.

Choose a smooth even convex function
\[
 \chi_\varepsilon:\mathbb R\longrightarrow\mathbb R
\]
such that
\[
 \chi_\varepsilon(t)=|t|
 \quad\text{for }|t|\geq\varepsilon,
 \qquad
 |\chi_\varepsilon'(t)|\leq1,
 \qquad
 \chi_\varepsilon''(t)\geq0,
\]
and
\[
 0\leq\chi_\varepsilon(t)-|t|\leq\varepsilon
 \qquad
 \text{for all }t\in\mathbb R.
\]
For smooth real-valued functions $f,g$, define their
\emph{regularized maximum} by
\begin{equation}\label{eq:regularizedmax}
 \operatorname{rmax}_\varepsilon(f,g)
 :=
 \frac{f+g+\chi_\varepsilon(f-g)}{2}.
\end{equation}
Since
\[
 \max\{f,g\}
 =
 \frac{f+g+|f-g|}{2},
\]
we have
\[
 \operatorname{rmax}_\varepsilon(f,g)
 =
 \max\{f,g\}
\]
whenever
\[
 |f-g|\geq\varepsilon,
\]
and
\[
 0
 \leq
 \operatorname{rmax}_\varepsilon(f,g)-\max\{f,g\}
 \leq
 \frac{\varepsilon}{2}.
\]

A direct calculation gives
\begin{align*}
 dd_J^c\operatorname{rmax}_\varepsilon(f,g)
 &=
 \frac{1+\chi_\varepsilon'(f-g)}{2}\,dd_J^cf\\
 &\quad+
 \frac{1-\chi_\varepsilon'(f-g)}{2}\,dd_J^cg\\
 &\quad+
 \frac{\chi_\varepsilon''(f-g)}{2}
 d(f-g)\wedge d_J^c(f-g).
\end{align*}
For every $p\in U$ and every $w\in T_pX$,
\[
 \bigl(
 d(f-g)\wedge d_J^c(f-g)
 \bigr)_p(w,Jw)
 =
 d(f-g)_p(w)^2+d(f-g)_p(Jw)^2
 \geq0.
\]
Thus the last term is nonnegative on every complex line.  The first two
coefficients are nonnegative and sum to one.  Hence, if $f$ and $g$ are
strictly $J$-plurisubharmonic, then
\[
 \operatorname{rmax}_\varepsilon(f,g)
\]
is again strictly $J$-plurisubharmonic; compare
\cite{SukhovRegularizedMax}.

For $s\in[0,1]$, define on $U$
\[
 R_s
 :=
 \operatorname{rmax}_\varepsilon
 \bigl(\rho_s,a_1\rho-b\bigr).
\]
By the inequalities above,
\[
 R_s=\rho_s
 \qquad\text{on }U_{\mathrm{out}},
\]
while
\[
 R_s=a_1\rho-b
 \qquad\text{on }U_{\mathrm{in}}.
\]
Moreover, $R_s$ is strictly $J$-plurisubharmonic.

We also verify that no new zero is introduced on the compact middle region
$K$.  There,
\[
 \max\{\rho_s,a_1\rho-b\}\leq-\mu,
\]
and therefore
\[
\begin{aligned}
 R_s
 &\leq
 \max\{\rho_s,a_1\rho-b\}
 +\frac{\varepsilon}{2}\\
 &\leq
 -\mu+\frac{\varepsilon}{2}
 <0,
\end{aligned}
\]
because $\varepsilon<2\mu$.

Since $R_s=a_1\rho-b$ on the fixed inner subcollar
$U_{\mathrm{in}}$, it extends smoothly across the inner edge of $U$ by the
function $a_1\rho-b$.  Define the resulting global function by
\[
 \widehat\rho_s
 =
 \begin{cases}
 R_s,
 & \text{on }U,\\[2mm]
 a_1\rho-b,
 & \text{outside the inner edge of }U.
 \end{cases}
\]
Then $\widehat\rho_s$ is smooth and strictly
$J$-plurisubharmonic on $X$.  Moreover,
\[
 \widehat\rho_s=\rho_s
 \qquad\text{on }U_{\mathrm{out}},
\]
and
\[
 \widehat\rho_s<0
 \qquad\text{on }X\setminus Y.
\]
Thus
\[
 \widehat\rho_s^{-1}(0)=Y,
 \qquad
 d\widehat\rho_s|_Y=d\rho_s|_Y\neq0.
\]
Hence every $\widehat\rho_s$ is a global strictly
$J$-plurisubharmonic defining function for $Y$.

\smallskip
\noindent\emph{Step 6: the global path and the endpoint.}
Since
\[
 \rho_0=\rho,
\]
the functions $\rho$ and $\widehat\rho_0$ agree on
$U_{\mathrm{out}}$.  Both are globally strictly
$J$-plurisubharmonic and negative on $X\setminus Y$.  Therefore
\[
 \sigma_t
 :=
 (1-t)\rho+t\widehat\rho_0,
 \qquad
 0\leq t\leq1,
\]
is a path of global strictly $J$-plurisubharmonic defining functions.
Indeed,
\[
 dd_J^c\sigma_t
 =
 (1-t)dd_J^c\rho
 +t\,dd_J^c\widehat\rho_0
\]
is positive on every nonzero complex line, while
\[
 \sigma_t<0
 \qquad\text{on }X\setminus Y
\]
and
\[
 \sigma_t|_Y=0.
\]
Since $\rho=\widehat\rho_0$ near $Y$, the path $\sigma_t$ is fixed near
the boundary.

After reaching $\widehat\rho_0$, follow the family
\[
 \{\widehat\rho_s\}_{0\leq s\leq1}.
\]
Concatenating these two paths gives a path from $\rho$ to
\[
 \widetilde\rho:=\widehat\rho_1
\]
through global strictly $J$-plurisubharmonic defining functions with zero
set $Y$ and nonvanishing boundary differential throughout.

Finally,
\[
 \widehat\rho_1=\rho_1
\]
on $U_{\mathrm{out}}$, so \eqref{eq:boundaryjetrescale} gives
\[
 d_J^c\widetilde\rho|_Y
 =
 d_J^c\rho_1|_Y
 =
 q\alpha.
\]
This proves both assertions.
\end{proof}

\begin{proposition}[Construction of the normalized Stein data]
\label{prop:normalized-data}
There are strictly $J_\pm$-plurisubharmonic defining functions $\rho_\pm$ and
a diffeomorphism $\tau:Y\to Y$ such that:
\begin{enumerate}[label=\textup{(\roman*)}]
\item for each sign, $\rho_\pm$ is joined to $\rho_0$ through strictly
$J_\pm$-plurisubharmonic defining functions with the same boundary;
\item the final boundary forms are
\[
 \alpha_\pm=\kappa_\pm^*\eta.
\]
In particular, the leaves of the kernel foliation
$\ker(d\alpha_\pm)$ are circles; when oriented by the condition
$\alpha_\pm>0$, they represent the prescribed oriented free-homotopy classes
$[t_\pm]$;
\item $\tau=\kappa_-^{-1}\circ\kappa_+$ is a strict
coorientation-preserving contactomorphism:
\[
 \tau^*\alpha_-=\alpha_+;
\]
\item the final boundary forms define the already fixed cooriented contact
structures:
\[
 \ker\alpha_\pm=\xi_\pm;
\]
\item the exact symplectic volumes agree:
\[
 \int_X\omega_+^2=\int_X\omega_-^2.
\]
\end{enumerate}
Here, 
\[
\lambda_\pm =d^c_{J_{\pm}}\rho_{\pm}, \quad \omega_\pm=d\lambda_\pm, \quad \alpha_{\pm}=\lambda_{\pm}|_{Y}.
\]
\end{proposition}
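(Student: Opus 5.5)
The plan is to apply Lemma~\ref{lem:definingrescale} twice, once for each sign, with a rescaling function determined by comparing $\kappa_\pm^*\eta$ with the initial boundary forms $\alpha_\pm^0$.

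\emph{Step 1: the rescaling functions.} The maps $\kappa_\pm:(Y,\xi_\pm)\to(Y_{28,-36},\xi_{\mathrm{BW}})$ are coorientation-preserving contactomorphisms, and $\eta$ is a positive contact form for $\xi_{\mathrm{BW}}$. Hence $\kappa_\pm^*\eta$ is a contact form with kernel $\xi_\pm=\ker\alpha_\pm^0$, and it induces the same coorientation. Therefore there are unique smooth functions $q_\pm:Y\to\R_{>0}$ with
\[
 \kappa_\pm^*\eta=q_\pm\,\alpha_\pm^0 .
\]
Positivity of $q_\pm$ is exactly where coorientation preservation is used.

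\emph{Step 2: applying the lemma.} Apply Lemma~\ref{lem:definingrescale} to $(X,J_\pm,\rho_0)$ with $q=q_\pm$. This is legitimate because $\rho_0$ is strictly $J_-$-plurisubharmonic as well, as checked in Section~\ref{sec:stein}. The lemma gives $\rho_\pm$ with
\[
 d^c_{J_\pm}\rho_\pm|_Y=q_\pm\alpha_\pm^0=\kappa_\pm^*\eta,
\]
together with a path from $\rho_0$ through strictly $J_\pm$-plurisubharmonic defining functions with zero set $Y$. This proves (i) and the first part of (ii). Item (iv) follows at once, since multiplying by a positive function keeps both the kernel and the coorientation.

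\emph{Step 3: the Reeb orbits.} For the orbit statement in (ii), note that $\ker d\alpha_\pm=\kappa_\pm^*\ker d\eta$. The Reeb field of $\kappa_\pm^*\eta$ is the pullback of the Reeb field $T$ of $\eta$, whose orbits are the positively oriented fibers $t$. The $\kappa_\pm$ were chosen to carry $t_\pm$ to $t$, so the oriented leaves represent $[t_\pm]$.

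\emph{Step 4: $\tau$ and the volumes.} For (iii), set $\tau=\kappa_-^{-1}\circ\kappa_+$. Then
\[
 \tau^*\alpha_-=\kappa_+^*(\kappa_-^{-1})^*\kappa_-^*\eta=\kappa_+^*\eta=\alpha_+,
\]
and $\tau$ preserves coorientations because it is a composition of coorientation-preserving maps. For (v), Stokes' formula \eqref{eq:stokes-volume} gives
\[
 \int_X\omega_\pm^2=\int_Y\alpha_\pm\wedge d\alpha_\pm .
\]
Since $\tau^*\alpha_-=\alpha_+$, the two integrals are equal provided $\tau$ preserves the orientation of $Y$. This holds because $\tau$ is a contactomorphism between positive contact structures on $Y$ with its fixed boundary orientation, which is the same for $J_+$ and $J_-$ since both induce the same orientation on $X$. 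In the same way, $\int_Y\alpha_\pm\wedge d\alpha_\pm=\int_{Y_{28,-36}}\eta\wedge d\eta$ provided each $\kappa_\pm$ is orientation preserving.

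The only point I expect to need care is this orientation check in (v). Every contactomorphism between positive contact $3$-manifolds, where the orientation is the one given by $\alpha\wedge d\alpha>0$, preserves orientation. Both $\xi_\pm$ are positive for the outward-normal-first orientation of $Y$ by the conventions of Section~\ref{sec:conventions}, so the check goes through, and everything else is formal.
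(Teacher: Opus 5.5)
Your proposal is correct and follows the paper's proof essentially step for step: the same positive rescaling functions $q_\pm$ obtained from coorientation preservation, the same application of Lemma~\ref{lem:definingrescale} to $(X,J_\pm,\rho_0)$, the same pullback argument for the oriented characteristic circles, and the same Stokes computation for (v). The only difference is that you explicitly check that $\tau$ preserves the orientation of $Y$, which the paper uses implicitly in the step $\int_Y\tau^*(\alpha_-\wedge d\alpha_-)=\int_Y\alpha_-\wedge d\alpha_-$.
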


\begin{proof}
Because $\kappa_\pm$ preserves the coorientation and
$\ker\alpha_\pm^0=\xi_\pm$, there are unique smooth positive functions
$q_\pm:Y\to\R_{>0}$ such that
\[
 \kappa_\pm^*\eta=q_\pm\alpha_\pm^0.
\]
Apply Lemma~\ref{lem:definingrescale} to $(X,J_\pm,\rho_0)$ with the functions
$q_\pm$.  It produces defining functions $\rho_\pm$, together with the paths
asserted in part~\textup{(i)}, for which
\[
 \alpha_\pm=d_{J_\pm}^c\rho_\pm|_Y
 =q_\pm\alpha_\pm^0
 =\kappa_\pm^*\eta.
\]
This proves part~\textup{(ii)}.  Since $\eta$ is a connection form, its
positively oriented characteristic line is tangent to the circle fibers.
Because $\kappa_\pm$ carries $[t_\pm]$ to $[t]$, the pullback
characteristic circles represent $[t_\pm]$.  This proves the final assertion
of part~\textup{(ii)}.

Set $\tau=\kappa_-^{-1}\circ\kappa_+$.  Then
\[
 \tau^*\alpha_-
 =\kappa_+^*(\kappa_-^{-1})^*\kappa_-^*\eta
 =\kappa_+^*\eta
 =\alpha_+,
\]
which proves part~\textup{(iii)}.  Since $q_\pm>0$, the identities
$\alpha_\pm=q_\pm\alpha_\pm^0$ prove part~\textup{(iv)}.

Finally, Stokes' theorem and the strict
boundary equality give
\[
 \int_X\omega_+^2
 =\int_Y\alpha_+\wedge d\alpha_+
 =\int_Y\tau^*(\alpha_-\wedge d\alpha_-)
 =\int_Y\alpha_-\wedge d\alpha_-
 =\int_X\omega_-^2.
\]
This proves part~\textup{(v)}.
\end{proof}

\begin{corollary}[Coincidence after forgetting coorientation]
\label{cor:unoriented-plane-fields}
For the normalized defining functions constructed above,
\[
 \ker(\lambda_+|_Y)=\ker(\lambda_-|_Y)
\]
as unoriented plane subbundles of $TY$.
\end{corollary}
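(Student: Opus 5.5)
The plan is to reduce the statement to part~\textup{(iv)} of Proposition~\ref{prop:normalized-data} together with the sign relation \eqref{eq:conjugateforms} between the initial primitives. Normalization changes each boundary form only by a positive function. So as cooriented plane fields
\[
 \ker(\lambda_\pm|_Y)=\ker\alpha_\pm=\ker(q_\pm\alpha_\pm^0)=\ker\alpha_\pm^0=\xi_\pm .
\]
It therefore suffices to compare the initial kernels $\ker\alpha_+^0$ and $\ker\alpha_-^0$ as unoriented plane fields.

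The key step uses $J_-=-J_+$. Because $d^c_{J_-}=-d^c_{J_+}$ on functions, \eqref{eq:conjugateforms} gives $\lambda_-^0=-\lambda_+^0$, and restricting to $Y$ gives $\alpha_-^0=-\alpha_+^0$. A $1$-form and its negative have the same kernel. Hence $\ker\alpha_-^0=\ker\alpha_+^0$ as subbundles of $TY$, with opposite coorientations. Combined with the first display, this yields $\ker(\lambda_+|_Y)=\ker(\lambda_-|_Y)$ as unoriented plane fields.

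One can also check this directly from the normalized forms, avoiding the initial data. Since $\alpha_\pm=q_\pm\alpha_\pm^0$ with $q_\pm>0$, we get
\[
 \alpha_-=q_-\alpha_-^0=-q_-\alpha_+^0=-\frac{q_-}{q_+}\,\alpha_+ .
\]
Thus $\alpha_-$ is a nowhere-vanishing negative function multiple of $\alpha_+$, and the kernels coincide. The same computation shows that the coorientations are opposite, consistent with Corollary~\ref{cor:boundary-nonisotopy}. There is no genuine obstacle. The only point to handle carefully is that Lemma~\ref{lem:definingrescale} was applied with the fixed common boundary $Y=\rho_0^{-1}(0)$, so both $\lambda_\pm|_Y$ live on the same hypersurface and the positive rescalings $q_\pm$ are honest functions on $Y$. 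Proposition~\ref{prop:normalized-data}\textup{(i)} guarantees this.
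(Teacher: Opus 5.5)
Your proposal is correct and follows the paper's proof: both combine $\alpha_\pm=q_\pm\alpha_\pm^0$ with $q_\pm>0$ and the sign relation $\alpha_-^0=-\alpha_+^0$ from \eqref{eq:conjugateforms}, so that $\alpha_-$ is a negative function multiple of $\alpha_+$ and the two kernels coincide once coorientations are forgotten. Your additional remarks (opposite coorientations, and both forms living on the same hypersurface $Y$) are accurate but not needed for the argument.
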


\begin{proof}
One has $\alpha_+=q_+\alpha_+^0$ and
$\alpha_-=q_-\alpha_-^0$ for positive functions $q_\pm$, while
$\alpha_-^0=-\alpha_+^0$ by \eqref{eq:conjugateforms}.  Thus the two kernels
coincide after forgetting coorientation.
\end{proof}

\section{Cap extension and non-symplectomorphism}
\label{sec:cap}\label{sec:nonsymp}
In this section we prove
part~\textup{(iv)} of the Main Theorem~\ref{thm:main}.

\subsection*{Topology of the underlying circle bundle}
\begin{lemma}[Topology of a nontrivial circle bundle]\label{lem:center}
Let $\pi:Y\to\Sigma_g$ be a principal circle bundle of nonzero Euler number $e\in\Z$ over a closed connected oriented surface of genus $g\ge2$.  Let $t$ denote its oriented fiber in
$\pi_1(Y)$ and put $\mathfrak f=[t]\in H_1(Y;\Z)$.  Then:
\begin{enumerate}[label=\textup{(\roman*)}]
\item the fiber map $\pi_1(S^1)\to\pi_1(Y)$ is injective, so $t$ has infinite
order;
\item $H_1(Y;\Z)\cong\Z^{2g}\oplus\Z/|e|\Z$, and $\mathfrak f$ generates the
torsion summand;
\item with standard generators of $\pi_1(\Sigma_g)$ one has
\begin{equation}\label{eq:fundgroupcircle}
 \pi_1(Y)=
 \left\langle
 a_1,b_1,\ldots,a_g,b_g,t
 \ \middle|\
 t\text{ central},\
 \prod_{i=1}^g[a_i,b_i]=t^e
 \right\rangle;
\end{equation}
\item $Z(\pi_1(Y))=\langle t\rangle\cong\Z$.  Consequently every automorphism
of $\pi_1(Y)$ sends $t$ to $t^{\pm1}$.
\end{enumerate}
\end{lemma}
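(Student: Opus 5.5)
The plan is to derive everything from the long exact homotopy sequence of the fibration $S^1\to Y\to\Sigma_g$, together with a standard presentation obtained from a handle decomposition. Then (ii) and (iv) follow from abelianization and from the structure of $\pi_1(\Sigma_g)$.

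For (i), I would use the exact sequence
\[
 \pi_2(\Sigma_g)\to\pi_1(S^1)\to\pi_1(Y)\to\pi_1(\Sigma_g)\to1.
\]
Since $g\ge2$, the universal cover of $\Sigma_g$ is contractible, so $\pi_2(\Sigma_g)=0$. The fiber inclusion is therefore injective and $t$ has infinite order. Note that this step does not use $e\ne0$.

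For (iii), I would decompose $\Sigma_g$ as a punctured surface $\Sigma_g^\circ$ together with a disk. Over $\Sigma_g^\circ$, which is homotopy equivalent to a wedge of circles, the bundle is trivial, so $\pi_1$ of the preimage is $F(a_1,\dots,b_g)\times\langle t\rangle$ for chosen lifts $a_i,b_i$ of a section. Gluing in the solid torus $D^2\times S^1$ kills the class of the boundary of a meridian disk. With respect to the section this class is $\prod[a_i,b_i]\,t^{-e}$, and the definition of the Euler number is exactly the obstruction to extending the section over the disk. Van Kampen then gives \eqref{eq:fundgroupcircle}.

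Centrality of $t$ also follows directly: conjugation by a loop $\gamma$ transports the fiber around $\gamma$, and because the bundle is principal, with orientation-preserving structure group, the fiber returns with its orientation preserved. The one point requiring care is the sign of $t^{e}$ versus $t^{-e}$. The paper's Euler convention \eqref{eq:eulerconvention} might flip it, but both choices give isomorphic groups (replace $t$ by $t^{-1}$), so I would remark that the convention can be absorbed by reorienting. The conclusions in (ii) and (iv) are insensitive to this sign.

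For (ii), I would abelianize. The commutator product dies, so the relation becomes $e\,\mathfrak f=0$, and
\[
 H_1(Y;\Z)=\Z^{2g}\oplus\Z\mathfrak f/(e\,\mathfrak f)=\Z^{2g}\oplus\Z/|e|\Z,
\]
with $\mathfrak f$ generating the torsion summand. This is where $e\neq0$ is used.

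For (iv), I would argue as follows. Since $t$ is central, $\langle t\rangle\subseteq Z(\pi_1(Y))$. Conversely, the image of the center under the surjection $\pi_1(Y)\to\pi_1(\Sigma_g)$ lies in $Z(\pi_1(\Sigma_g))$, which is trivial for $g\ge2$: a nontrivial element of a torsion-free hyperbolic surface group has cyclic centralizer, and the group is not cyclic. Hence $Z(\pi_1(Y))\subseteq\ker=\langle t\rangle$, and this kernel is infinite cyclic by (i). Every automorphism preserves the center and therefore restricts to an automorphism of $\langle t\rangle\cong\Z$, so it sends $t$ to $t^{\pm1}$.

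The main obstacle is bookkeeping rather than depth. It lies in the justifications that $\pi_1(\Sigma_g)$ has trivial center (citing the standard fact, or Cartan--Hadamard plus hyperbolic geometry) and in pinning down the relation in (iii) with the paper's sign conventions. I would handle both by citation and by the reorientation remark above.
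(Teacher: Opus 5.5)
Your proposal is correct and follows essentially the paper's argument: the homotopy exact sequence with $\pi_2(\Sigma_g)=0$ for (i), the central-extension presentation abelianized for (ii), and triviality of the center of $\pi_1(\Sigma_g)$ for (iv). The paper just quotes the presentation as standard and cross-checks $b_1(Y)=2g$ with the Gysin sequence, whereas you derive the presentation by van Kampen.

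Your sign worry in (iii) is unnecessary. The paper's convention $e=-[d\eta/2\pi]$ is $c_1$ of the associated line bundle, and a Chern--Weil computation on $\Sigma_g^\circ\cup D^2$ shows this $e$ is exactly the clutching degree in your van Kampen step, so the relation is $\prod[a_i,b_i]=t^{e}$ on the nose. Note also that reorienting the fiber could not absorb a sign anyway, since it negates $e$ as well.
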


\begin{proof}
The homotopy exact sequence contains
\[
 \pi_2(\Sigma_g)\longrightarrow\pi_1(S^1)
 \longrightarrow\pi_1(Y)\longrightarrow\pi_1(\Sigma_g)\longrightarrow1.
\]
Since $g\geq2$, the base is aspherical and $\pi_2(\Sigma_g)=0$, proving
\textup{(i)}.  The cohomological Gysin sequence contains
\[
 0\longrightarrow H^1(\Sigma_g;\Z)\xrightarrow{\pi^*}H^1(Y;\Z)
 \longrightarrow H^0(\Sigma_g;\Z)
 \xrightarrow{\smile e}H^2(\Sigma_g;\Z).
\]
The final map is multiplication by the nonzero integer $e$ and is injective.
Hence $H^1(Y;\Z)\cong H^1(\Sigma_g;\Z)$ and $b_1(Y)=2g$.  The standard
central-extension presentation is \eqref{eq:fundgroupcircle}; its
abelianization imposes the relation $e[t]=0$ and yields
$H_1(Y;\Z)\cong\Z^{2g}\oplus\Z/|e|\Z$, proving \textup{(ii)} and
\textup{(iii)}.

For \textup{(iv)}, quotienting \eqref{eq:fundgroupcircle} by
$\langle t\rangle$ gives the surface group $\pi_1(\Sigma_g)$, whose center is
trivial.  Every central element of $\pi_1(Y)$ therefore lies in
$\langle t\rangle$, while $t$ is central by the presentation and has infinite
order by \textup{(i)}.  An automorphism restricts to an automorphism of this
infinite cyclic center, and hence sends $t$ to $t^{\pm1}$.
\end{proof}

\subsection*{Mapping-class input for nontrivial circle bundles}

The following two results are stated in the specialized forms used in
this section. Let
$\pi:Y_{g,e}\to\Sigma_g$ denote the oriented circle bundle of nonzero Euler
number $e$ over the closed oriented surface of genus $g\geq2$.

\begin{theorem}[Waldhausen, specialized form]
\label{thm:waldhausen-specialized}
The natural homomorphism
\[
 \pi_0\bigl(\operatorname{Homeo}(Y_{g,e})\bigr)
 \longrightarrow
 \operatorname{Out}\bigl(\pi_1(Y_{g,e})\bigr)
\]
is an isomorphism.\footnote{For a group $G$, an \emph{inner automorphism} is an automorphism of
the form
\[
 c_g:G\longrightarrow G,
 \qquad
 c_g(x)=gxg^{-1}
\]
for some $g\in G$.  The subgroup of all inner automorphisms is denoted by
$\operatorname{Inn}(G)\subset\operatorname{Aut}(G)$.  The
\emph{outer automorphism group} is the quotient
\[
 \operatorname{Out}(G)
 :=
 \operatorname{Aut}(G)/\operatorname{Inn}(G).
\]
Thus an outer automorphism records an automorphism of $G$ only up to
conjugation.  For a self-homeomorphism of a connected space, the induced
automorphism of the fundamental group depends on the choice of base point
and of a path identifying the base point with its image, but changing these
choices changes the induced automorphism only by an inner automorphism.
Hence a self-homeomorphism canonically determines an element of
$\operatorname{Out}(\pi_1)$.}  In particular, two homeomorphisms of $Y_{g,e}$ which
induce the same outer automorphism of the fundamental group are isotopic.
\end{theorem}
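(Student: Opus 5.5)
The plan is to deduce the statement from Waldhausen's classification of homeomorphisms of Haken (sufficiently large, irreducible) $3$--manifolds, checking first that $Y_{g,e}$ satisfies his hypotheses.

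\emph{Irreducibility and incompressible surfaces.} Since $g\ge2$, the base $\Sigma_g$ is aspherical. By Lemma~\ref{lem:center}\textup{(i)} the fiber has infinite order, so the long exact homotopy sequence shows that the universal cover of $Y_{g,e}$ is $\R^3$. Hence $Y_{g,e}$ is aspherical and irreducible. Take the preimage $\pi^{-1}(c)$ of an essential simple closed curve $c\subset\Sigma_g$; this is a vertical torus. Its $\pi_1$ is generated by $t$ and a lift of $c$, and it injects into $\pi_1(Y_{g,e})$, because $t$ has infinite order and the image of $c$ in the surface group is nontrivial. So $Y_{g,e}$ is Haken.

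\emph{Surjectivity.} Waldhausen's theorem says that every isomorphism between fundamental groups of closed Haken manifolds is induced by a homeomorphism. This gives surjectivity onto $\operatorname{Out}$, where base-point changes are absorbed by inner automorphisms as in the footnote.

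\emph{Injectivity.} Suppose $h$ induces an inner automorphism. Since $Y_{g,e}$ is a $K(\pi,1)$, the map $h$ is homotopic to the identity. Waldhausen's homotopy-implies-isotopy theorem for closed Haken manifolds then makes $h$ isotopic to the identity. This is the part I expect to need the most care. His theorem is stated in the PL or smooth category, so to pass to $\operatorname{Homeo}$ I would use Moise's theorem and the Hamstrom/Kirby--Siebenmann identification $\pi_0\operatorname{Homeo}=\pi_0\Diff$ in dimension three. Alternatively, one can read the statement as concerning isotopy classes of diffeomorphisms, which is what the later cap-extension argument uses.

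The final sentence of the theorem is then a restatement of injectivity: two homeomorphisms inducing the same outer automorphism differ by one inducing an inner automorphism, and that homeomorphism is isotopic to the identity.
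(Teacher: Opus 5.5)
Your proposal is correct and takes the same route as the paper's sketch: you get irreducibility from the $\R^3$ universal cover and sufficient largeness from the incompressible vertical torus, and then invoke Waldhausen. Splitting the result into surjectivity (homotopy equivalences of closed Haken manifolds are homotopic to homeomorphisms) and injectivity (homotopy implies isotopy, using that $Y_{g,e}$ is a $K(\pi,1)$), and remarking on the passage from the PL/smooth to the topological category, simply unpacks the paper's citation of Waldhausen's Corollary~7.5 and the remark following it.
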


\begin{proof}[Proof sketch]
Since the universal cover of $Y_{g,e}$ is homeomorphic to $\mathbb R^3$,
one has
\[
 \pi_2(Y_{g,e})=0.
\]
Hence $Y_{g,e}$ is irreducible by the sphere theorem.
 If $\gamma\subset\Sigma_g$ is an essential simple
closed curve, then the vertical torus $\pi^{-1}(\gamma)$ is incompressible:
its fundamental group $\mathbb Z^2$ injects into the central extension
$\pi_1(Y_{g,e})$.  Hence $Y_{g,e}$ is sufficiently large.\footnote{In Waldhausen's terminology, a compact $3$-manifold is called
\emph{sufficiently large} if it contains a properly embedded two-sided
incompressible surface.  Here a surface $F\subset M$ is
\emph{incompressible} if the inclusion induces an injection
\[
 \pi_1(F)\longrightarrow\pi_1(M).
\]
In the present closed orientable case, the vertical torus
$\pi^{-1}(\gamma)\subset Y_{g,e}$ provides such a surface.}
  Waldhausen's
hierarchy theorem applies to such irreducible sufficiently large
$3$--manifolds; Corollary~7.5 of \cite{Waldhausen} identifies mapping classes
with outer automorphisms, and the remark following that corollary gives the
corresponding isotopy statement.  The full argument cuts along an
incompressible hierarchy, compares the induced maps on the successive
pieces, and then reassembles the isotopies.
\end{proof}

\begin{theorem}[Chen--Tshishiku, specialized form]
\label{thm:chen-tshishiku-specialized}
Let $z$ be the oriented generator of the center of
$\pi_1(Y_{g,e})$. 
Let
\[
 \mathcal{OUT}_{\mathrm{bf}}\bigl(\pi_1(Y_{g,e})\bigr)
<
 \operatorname{Out}\bigl(\pi_1(Y_{g,e})\bigr)
\]
be the subgroup consisting of outer automorphisms which fix $z$ and induce
an orientation-preserving outer automorphism of
\[
 \pi_1(Y_{g,e})/\langle z\rangle
 \cong
 \pi_1(\Sigma_g).
\]  Then there is a short
exact sequence
\[
 1\longrightarrow H^1(\Sigma_g;\mathbb Z)
 \longrightarrow
 \mathcal{OUT}_{\mathrm{bf}}(\pi_1(Y_{g,e}))
 \longrightarrow
 \operatorname{Mod}(\Sigma_g)
 \longrightarrow1.
\]
Moreover, every element of
$\mathcal{OUT}_{\mathrm{bf}}(\pi_1(Y_{g,e}))$ is induced by an
orientation-preserving fiber-preserving homeomorphism of $Y_{g,e}$.
\end{theorem}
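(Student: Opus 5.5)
The argument has three parts: build the map $\mathcal{OUT}_{\mathrm{bf}}(\pi_1(Y_{g,e}))\to\operatorname{Mod}(\Sigma_g)$ from the central quotient, identify its kernel with $H^1(\Sigma_g;\Z)$ by a normalization argument, and realize every element geometrically by bundle maps. One could simply cite Chen--Tshishiku, but the specialized form admits a direct proof from Lemma~\ref{lem:center}.

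\textbf{The map to the mapping class group.} Write $G=\pi_1(Y_{g,e})$ and $\bar G=G/\langle z\rangle\cong\pi_1(\Sigma_g)$, using the presentation \eqref{eq:fundgroupcircle} with $z=t$. By Lemma~\ref{lem:center}\textup{(iv)} the center $\langle z\rangle$ is characteristic. Hence every automorphism of $G$ descends to $\bar G$, and inner automorphisms descend to inner ones. This gives a homomorphism $\mathcal{OUT}_{\mathrm{bf}}(G)\to\operatorname{Out}^+(\pi_1(\Sigma_g))$. By Dehn--Nielsen--Baer, the target is $\operatorname{Mod}(\Sigma_g)$.

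\textbf{The kernel.} Take a class in the kernel with representative $\varphi$. After composing $\varphi$ with an inner automorphism of $G$, I may assume it induces the identity on $\bar G$. Then $\varphi(x)=x\,z^{c(x)}$ for a function $c:G\to\Z$. Centrality of $z$ makes $c$ a homomorphism, and $\varphi(z)=z$ forces $c(z)=0$. So $c$ factors through $\bar G$ and defines a class in $H^1(\Sigma_g;\Z)$.

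Conversely, each $c\in\operatorname{Hom}(\pi_1(\Sigma_g),\Z)$ defines an automorphism $x\mapsto x\,z^{c(x)}$. The defining relation $\prod[a_i,b_i]=z^e$ is preserved because commutators absorb the central factors, and the map $x\mapsto x\,z^{-c(x)}$ is an inverse.

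For injectivity of $c\mapsto[\varphi_c]$, I must show the normalization is unique. Two normalized representatives differ by an inner automorphism $c_g$ that is trivial on $\bar G$. Then the image of $g$ is central in the centerless group $\pi_1(\Sigma_g)$, so $g\in\langle z\rangle$ and $c_g=\id$. Likewise $\varphi_c$ is inner only if $c=0$. This gives exactness at the left and in the middle.

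\textbf{Surjectivity and geometric realization.} Let $f$ be an orientation-preserving diffeomorphism of $\Sigma_g$. Then $f^*Y_{g,e}$ has the same Euler number $e$, since $f$ has degree one. So there is a bundle isomorphism $F:Y_{g,e}\to Y_{g,e}$ covering $f$ that preserves the oriented fiber. $F$ is orientation preserving and induces an element of $\mathcal{OUT}_{\mathrm{bf}}(G)$ mapping to $[f]$.

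For the kernel, represent $c\in H^1(\Sigma_g;\Z)$ by a smooth map $u:\Sigma_g\to S^1$ and use the gauge transformation $y\mapsto u(\pi(y))\cdot y$. This map covers the identity, preserves the fibers and their orientation, and sends a loop $x$ to $x\,z^{\pm c(x)}$; replacing $u$ by $\bar u$ if necessary fixes the sign. Every element of $\mathcal{OUT}_{\mathrm{bf}}(G)$ is a product of a lift $F$ and a gauge map, so every element is realized by an orientation-preserving fiber-preserving homeomorphism.

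\textbf{Expected obstacle.} The step needing most care is the bookkeeping in the kernel computation: checking that the normalization is unique up to $\langle z\rangle$, and that the sign conventions for $z$, the base orientation and the gauge action match. The group theory itself is routine once Lemma~\ref{lem:center}\textup{(iv)} is in hand.
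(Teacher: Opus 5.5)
Your proposal is correct and follows the same route as the paper's sketch. In both, you descend through the characteristic center, identify the kernel with the transvections $\gamma\mapsto\gamma z^{\phi(\bar\gamma)}$, apply Dehn--Nielsen--Baer, and realize base mapping classes by bundle lifts and transvections by vertical fiber twists; the only difference is that you check directly the steps the paper delegates to Chen--Tshishiku, namely uniqueness of the normalized representative and the effect of the gauge map on $\pi_1$.
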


\begin{proof}[Proof sketch]
The fundamental group is the central extension
\[
 1\longrightarrow\langle z\rangle
 \longrightarrow\pi_1(Y_{g,e})
 \longrightarrow\pi_1(\Sigma_g)
 \longrightarrow1.
\]
Because the center is precisely $\langle z\rangle$, every relevant
automorphism descends to the surface group.  The kernel consists of the
transvections
\[
 \gamma\longmapsto \gamma z^{\phi(\bar\gamma)},
 \qquad
 \phi\in\operatorname{Hom}(\pi_1(\Sigma_g),\mathbb Z)
       =H^1(\Sigma_g;\mathbb Z).
\]
The Dehn--Nielsen--Baer theorem identifies the orientation-preserving outer
automorphisms of the quotient with $\operatorname{Mod}(\Sigma_g)$.
Chen--Tshishiku construct the resulting exact sequence and its geometric
realization in Sections~2.2--2.3 of \cite{ChenTshishiku}; the transvections
are realized by vertical fiber twists, while a base mapping class lifts to a
fiber-preserving homeomorphism.  Waldhausen's theorem above identifies these
geometric representatives with the required mapping classes.
\end{proof}
\subsection*{Cap extension}
\begin{proposition}[Cap extension]\label{prop:capextension}
For $i=1,2$, let $E_i\to\Sigma_i$ be oriented disk bundles over closed
oriented surfaces of genus at least two, let $Y_i=\partial E_i$, and orient each
circle fiber $u_i$ as the boundary of the oriented disk fiber.  Suppose the
oriented circle bundles have the same nonzero Euler number.  If
\[
 h:Y_1\longrightarrow Y_2
\]
is an orientation-preserving homeomorphism satisfying
$h_*[u_1]=[u_2]$ as oriented free-homotopy classes, then $h$ is isotopic,
through orientation-preserving homeomorphisms, to a fiber-preserving
homeomorphism $h_0$ which preserves the fiber and base orientations.
Moreover, $h_0$ extends to an orientation-preserving
homeomorphism
\[
 \widehat h_0:E_1\longrightarrow E_2
\]
which maps the zero section to the zero section and preserves its orientation.
\end{proposition}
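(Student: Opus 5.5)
The plan is to reduce to the case $Y_1=Y_2$ and then use the mapping-class inputs of this section. Since the two oriented circle bundles have the same nonzero Euler number $e$ and genus $g$, the classification of oriented circle bundles gives bundle isomorphisms. Choose an orientation-preserving, fiber-orientation-preserving bundle isomorphism $\psi:Y_2\to Y_1$ covering an orientation-preserving diffeomorphism $\Sigma_2\to\Sigma_1$. Because the disk bundles are the associated $D^2$-bundles, $\psi$ extends radially to $\widehat\psi:E_2\to E_1$, preserving the zero section and its orientation. It therefore suffices to treat $h'=\psi\circ h:Y_1\to Y_1$, which satisfies $h'_*[u_1]=[u_1]$. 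Write $\pi_1=\pi_1(Y_1)$.

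\textbf{Step 1.} By Lemma~\ref{lem:center}, the center of $\pi_1$ is $\langle u_1\rangle$. The hypothesis on free-homotopy classes then says that the outer automorphism $[h'_*]$ fixes the central generator $z=u_1$; conjugation acts trivially on the center, so this statement is well defined. Hence $[h'_*]$ descends to an outer automorphism $\bar\phi$ of $\pi_1(\Sigma_1)$.

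\textbf{Step 2.} We check that $\bar\phi$ is orientation-preserving. This is the step I expect to be the main obstacle. I would argue via the Euler-class relation $\prod[a_i,b_i]=t^e$. An automorphism of the surface group sends the relator to a conjugate of the relator or of its inverse, according as it preserves or reverses orientation. Lifting to $\pi_1$, the image of $\prod[a_i,b_i]$ is then, up to conjugation, $t^{\pm e}$ times an element of the center that must be trivial. Since $z\mapsto z$, we get $t^{e}\mapsto t^{e}$. Because $e\neq0$ and $t$ has infinite order, the sign must be $+$, so $\bar\phi$ is orientation-preserving.

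As a cross-check, use degrees. The map $h'$ preserves the orientation of $Y_1$ and the fiber orientation, and the orientation of $Y_1$ is the product of the base and fiber orientations. If $h'$ were homotopic to a fiber-preserving map reversing the base, it would have degree $-1$. Either argument places $[h'_*]$ in $\mathcal{OUT}_{\mathrm{bf}}(\pi_1)$.

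\textbf{Step 3.} By Theorem~\ref{thm:chen-tshishiku-specialized}, there is an orientation-preserving fiber-preserving homeomorphism $h_0'$ inducing $[h'_*]$. The realizations are lifts of base mapping classes composed with vertical twists, so $h_0'$ preserves both the fiber and the base orientations. By Theorem~\ref{thm:waldhausen-specialized}, $h'$ is isotopic to $h_0'$. The isotopy passes through orientation-preserving homeomorphisms, since degree is constant along it. Set $h_0=\psi^{-1}\circ h_0'$.

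\textbf{Step 4 (extension).} A fiber-preserving homeomorphism $h_0'$ of the circle bundle that preserves fiber orientation restricts, on each fiber, to an orientation-preserving homeomorphism of $S^1$ covering the base map $f$. Extend it by coning on each fiber:
\[
 \widehat h_0'(x,r\theta)=\bigl(f(x),\,r\,h_0'(x,\theta)\bigr),
\]
using the radial structure of the disk bundle in local trivializations. This is well defined and continuous, since the cone on a fiberwise homeomorphism is a homeomorphism that fixes the center. It is orientation-preserving because both the base and the disk orientations are preserved. It maps the zero section to itself via $f$, which is orientation-preserving.

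Finally, compose with $\widehat\psi^{-1}$ to obtain $\widehat h_0:E_1\to E_2$ extending $h_0$.
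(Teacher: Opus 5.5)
Your proposal is correct and is essentially the paper's proof: reduce by an orientation- and fiber-orientation-preserving bundle isomorphism to a self-map of $Y_1$, use the relation $\prod[a_i,b_i]=t^e$ to place the outer automorphism in $\mathcal{OUT}_{\mathrm{bf}}(\pi_1(Y_1))$ (the paper phrases this as naturality of the extension class in $H^2(\pi_1(\Sigma_1);\Z)$), apply Theorems~\ref{thm:chen-tshishiku-specialized} and~\ref{thm:waldhausen-specialized}, and cone off fiberwise. Two things to tighten: the statement does not assume $g(\Sigma_1)=g(\Sigma_2)$, so you must first derive it, as the paper does, from $H_1(Y_i;\Z)\cong\Z^{2g(\Sigma_i)}\oplus\Z/|e|\Z$ and the homeomorphism $h$; and your degree ``cross-check'' presupposes that $h'$ is homotopic to some fiber-preserving map, so it is the relator argument alone that carries Step~2.
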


\begin{proof}
The Gysin sequence for the oriented
circle bundle
\[
 \pi_i:Y_i\longrightarrow\Sigma_i
\]
gives
\[
 0
 \longrightarrow
 H^1(\Sigma_i;\Z)
 \xrightarrow{\ \pi_i^*\ }
 H^1(Y_i;\Z)
 \xrightarrow{\ (\pi_i)_*\ }
 H^0(\Sigma_i;\Z)
 \xrightarrow{\ \smile e_i\ }
 H^2(\Sigma_i;\Z),
\]
where $e_i\in H^2(\Sigma_i;\Z)$ is the Euler class.  Since $\Sigma_i$ is
closed, connected, and oriented,
\[
 H^0(\Sigma_i;\Z)\cong\Z,
 \qquad
 H^2(\Sigma_i;\Z)\cong\Z,
\]
and the last map is multiplication by the nonzero Euler number
$\langle e_i,[\Sigma_i]\rangle$.  It is therefore injective.  Exactness
then implies that $(\pi_i)_*$ has zero image, and hence that $\pi_i^*$ is
surjective.  Since $\pi_i^*$ is already injective by exactness at
$H^1(\Sigma_i;\Z)$, it follows that
\[
 \pi_i^*:H^1(\Sigma_i;\Z)
 \xrightarrow{\cong}
 H^1(Y_i;\Z).
\]
Hence $b_1(Y_i)=2g(\Sigma_i)$.
Since $h$ induces an isomorphism
\[
 h^*:H^1(Y_2;\Z)\xrightarrow{\cong}H^1(Y_1;\Z),
\]
we have
\[
 2g(\Sigma_1)
 =
b_1(Y_1)
 =
 b_1(Y_2)
 =
 2g(\Sigma_2),
\]
and hence
\[
 g(\Sigma_1)=g(\Sigma_2).
\]
Choose an orientation-preserving diffeomorphism
\[
 f:\Sigma_1\longrightarrow\Sigma_2.
\]
Since the two oriented circle bundles have the same
Euler number and $f$ preserves orientation,
\[
 \bigl\langle f^*e_2,[\Sigma_1]\bigr\rangle
 =
 \bigl\langle e_2,[\Sigma_2]\bigr\rangle
 =
 \bigl\langle e_1,[\Sigma_1]\bigr\rangle.
\]
Since
\[
 H^2(\Sigma_1;\Z)\cong\Z,
\]
it follows that
\[
 f^*e_2=e_1.
\]
Oriented circle bundles over $\Sigma_1$ are classified up to
fiber-orientation-preserving bundle isomorphism by their Euler classes.
Hence there exists a fiber-orientation-preserving bundle isomorphism
\[
 b:Y_1\longrightarrow Y_2
\]
covering $f$. Set 
\[
\varphi=b^{-1}\circ h: Y_1\to Y_1.
\]

The oriented fiber subgroup is the center of $\pi_1(Y_1)$.  The hypothesis
$h_*[u_1]=[u_2]$ therefore says that the outer automorphism induced by
$\varphi$ fixes the oriented central generator. 
The oriented fiber subgroup
\[
 \langle z\rangle\cong\mathbb Z
\]
is the center of $\pi_1(Y_1)$, and the quotient fits into the central
extension
\[
 1\longrightarrow \langle z\rangle
 \longrightarrow \pi_1(Y_1)
 \longrightarrow \pi_1(\Sigma_1)
 \longrightarrow 1.
\]
Its extension class
\[
 e\in H^2(\pi_1(\Sigma_1);\Z)
 \cong H^2(\Sigma_1;\Z)
\]
is the Euler class of the oriented circle bundle
$Y_1\to\Sigma_1$.

The hypothesis on the oriented fiber class implies that the outer
automorphism induced by $\varphi$ fixes the chosen generator $z$ of the
center.  Hence it induces an outer automorphism
\[
 \overline{\varphi}_*
 \in
 \operatorname{Out}\bigl(\pi_1(\Sigma_1)\bigr).
\]
  Since $\varphi_*$ acts trivially on the central
coefficient group $\langle z\rangle\cong\Z$, naturality of the extension
class gives
\[
 \overline{\varphi}_*^{\,*}e=e.
\]

Now
\[
 H^2(\Sigma_1;\Z)\cong\Z,
\]
and an orientation-preserving outer automorphism of the surface group acts
as $+1$ on this group, whereas an orientation-reversing one acts as $-1$.
If $\overline{\varphi}_*$ were orientation reversing, we would therefore
have
\[
 \overline{\varphi}_*^{\,*}e=-e.
\]
Together with
\[
 \overline{\varphi}_*^{\,*}e=e
\]
this would imply $e=-e$, which is impossible because the Euler class is
nonzero.  Hence $\overline{\varphi}_*$ preserves the orientation of the
base.  Thus $[\varphi_*]$ fixes the oriented central generator and induces
an orientation-preserving outer automorphism of the quotient, so
\[
 [\varphi_*]
 \in
 \mathcal{OUT}_{\mathrm{bf}}\bigl(\pi_1(Y_1)\bigr).
\]

By Theorem~\ref{thm:chen-tshishiku-specialized}, there is an
orientation-preserving fiber-preserving homeomorphism
$\varphi_0:Y_1\to Y_1$ inducing the same outer automorphism as $\varphi$.
Theorem~\ref{thm:waldhausen-specialized} then implies that $\varphi$ and
$\varphi_0$ are isotopic.  Consequently $h$ is isotopic to the
fiber-preserving homeomorphism
\[
 h_0=b\circ \varphi_0:Y_1\longrightarrow Y_2.
\]
The action on the oriented center is invariant under isotopy, so $h_0$
preserves the circle-fiber orientation.  Let
$\bar h_0:\Sigma_1\to\Sigma_2$ be the induced base map.  Naturality gives
\[
 \bar h_0^*e_2=e_1.
\]
Since the evaluated Euler numbers are the same nonzero integer, this forces
$\deg(\bar h_0)=+1$.  Hence the base orientation is preserved.

Choose fiberwise norms on the disk bundles.  They identify each $E_i$ with
the fiberwise cone on $Y_i$:
\[
 E_i\cong\bigl(Y_i\times[0,1]\bigr)/\!\sim,
\]
where every circle $Y_{i,x}\times\{0\}$ is collapsed to the zero vector over
$x$.  Define
\[
 \widehat h_0([y,r])=[h_0(y),r].
\]
This is well defined, continuous at the cone points, and invertible.  On each
disk fiber it is the Alexander cone extension of an
orientation-preserving circle homeomorphism.  It therefore preserves the
disk-fiber orientation, maps the zero section to the zero section, and
restricts there to the orientation-preserving base map.  Hence it preserves
the total orientation and the orientation of the zero section.
\end{proof}

\begin{unnumberedremark}
Proposition~\ref{prop:capextension} is used in both parts~\textup{(iv)} and
\textup{(v)} of Theorem~\ref{thm:main}, but the hypothesis
$h_*[u_1]=[u_2]$ is obtained in two different ways.  In the
symplectic argument, the boundary restriction of the hypothetical
symplectomorphism preserves the oriented characteristic line of the
normalized endpoint forms, so Lemma~\ref{lem:symp-characteristic} gives the
fiber condition directly.  In the Weinstein argument, the boundary map
obtained from Gray stability is only a contactomorphism of the cooriented
plane fields; the required fiber condition is instead supplied by the
Floer-theoretic Theorem~\ref{thm:fiberdetection}.  The topological extension
step is the same in both arguments; only the source of its fiber-preservation
hypothesis differs.
\end{unnumberedremark}

In our divisor-complement situation, the boundary-fiber orientations satisfy
\[
 u_\pm=t_\pm^{-1}
\]
by \eqref{eq:orientationtable}.  Reversing the oriented fiber changes the sign
of the Euler class, so the $u_\pm$-oriented cap-boundary circle bundles over
$D_\pm$ both have Euler number $+36$.  The contact orientation on
$Y=\partial X$ is the convex-boundary orientation, whereas the orientation of
the same hypersurface as $\partial\nu(D_\pm)$ is its negative.   Consequently, any orientation-preserving
boundary homeomorphism carrying $[t_+]$ to $[t_-]$ also carries
$[u_+]=[t_+^{-1}]$ to $[u_-]=[t_-^{-1}]$ and satisfies
Proposition~\ref{prop:capextension} when regarded as a map between the cap
boundaries.

\subsection*{Non-symplectomorphism}
The following collar case of topological isotopy extension
is used in the capping arguments to prove parts (iv) and (v) of the Main Theorem~\ref{thm:main}.
 \begin{lemma}[Topological collar adjustment]
\label{lem:collar-adjustment}
Let $F:X_1\to X_2$ be a homeomorphism of compact manifolds with boundary, and
let $k_t:\partial X_1\to\partial X_2$, $0\le t\le1$, be an isotopy with
$k_0=F|_{\partial X_1}$.  Then $F$ is isotopic, through homeomorphisms supported
in a boundary collar, to a homeomorphism $F_1$ with
$F_1|_{\partial X_1}=k_1$.
\end{lemma}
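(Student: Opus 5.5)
The plan is to use the existence of topological boundary collars and to spread the isotopy $k_t$ out along the collar coordinate. The first step is to choose a collar of $\partial X_1$, that is, a topological embedding $c:\partial X_1\times[0,1]\to X_1$ with $c(y,0)=y$ and image a closed neighborhood of $\partial X_1$. Brown's collaring theorem supplies such a collar for any topological manifold with boundary. The construction is carried out on $X_1$, and only afterwards composed with $F$.

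Next, set $\psi_t=F|_{\partial X_1}^{-1}\circ k_t:\partial X_1\to\partial X_1$. This is an isotopy of homeomorphisms of $\partial X_1$ with $\psi_0=\id$. For $s\in[0,1]$, define a homeomorphism $G_s:X_1\to X_1$ to be the identity outside the collar, and on the collar set
\[
 G_s\bigl(c(y,r)\bigr)=c\bigl(\psi_{s(1-r)}(y),r\bigr).
\]
The following checks are needed:
\begin{enumerate}[label=\textup{(\alph*)}]
\item At $r=1$ the formula gives the identity, because $\psi_0=\id$. Hence $G_s$ matches the identity across the inner edge of the collar and is continuous there.
\item On each slice $r=\mathrm{const}$, $G_s$ is the homeomorphism $\psi_{s(1-r)}$.
\item $G_s$ is a bijection, with inverse $c(y,r)\mapsto c(\psi_{s(1-r)}^{-1}(y),r)$.
\end{enumerate}
Continuity of this inverse, jointly in $(y,r,s)$, is the one point that needs care. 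Here I would use that an isotopy means a continuous map $\partial X_1\times[0,1]\to\partial X_1\times[0,1]$, $(y,t)\mapsto(\psi_t(y),t)$, that is a homeomorphism level by level. Since $\partial X_1\times[0,1]$ is compact Hausdorff, a continuous bijection of it is a homeomorphism, so $(y,t)\mapsto\psi_t^{-1}(y)$ is jointly continuous. This is the main obstacle, and it is the only place where compactness is used, through the compact Hausdorff argument.

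Finally, set $F_s=F\circ G_s$. Then $F_0=F$, each $F_s$ differs from $F$ only inside $c(\partial X_1\times[0,1))$ (pulled back to $X_1$; equivalently, its image collar in $X_2$), and at $r=0$ we get $F_1|_{\partial X_1}=F\circ\psi_1=k_1$. If a collar-supported isotopy on the $X_2$ side is preferred, one can instead write $F_s=\widetilde G_s\circ F$ with $\widetilde G_s=F\circ G_s\circ F^{-1}$, which is supported in the image collar $F(c(\partial X_1\times[0,1]))$. No smoothness is needed at any point, which matches the use of homeomorphisms in Proposition~\ref{prop:capextension}.
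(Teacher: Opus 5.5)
Your proposal is correct and is essentially the paper's argument. The paper pulls back a collar $c_2$ of $\partial X_2$ via $F$ and sets $F_s(c_1(y,r))=c_2(k_{s\chi(r)}(y),r)$; this equals $F\circ G_s$ for your $G_s$, with the linear profile $1-r$ replaced by a cutoff $\chi$. Your compact Hausdorff argument for joint continuity of the inverses also supplies a point the paper leaves implicit.
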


\begin{proof}
Choose a product collar
\[
 c_2:\partial X_2\times[0,\varepsilon]
 \longrightarrow X_2
\]
of $\partial X_2$.  Define a collar of $\partial X_1$ by
\[
 c_1(y,r)
 :=
 F^{-1}\bigl(c_2(k_0(y),r)\bigr).
\]
Then
\[
 F\bigl(c_1(y,r)\bigr)
 =
 c_2(k_0(y),r),
\]
so, in these collar coordinates,
\[
 F(y,r)=(k_0(y),r).
\]

Choose a smooth function
\[
 \chi:[0,\varepsilon]\longrightarrow[0,1]
\]
such that
\[
 \chi(r)=1
 \quad\text{for }r\text{ near }0,
 \qquad
 \chi(r)=0
 \quad\text{for }r\text{ near }\varepsilon.
\]
Define
\[
 F_1\bigl(c_1(y,r)\bigr)
 :=
 c_2\bigl(k_{\chi(r)}(y),r\bigr)
\]
on the collar, and set
\[
 F_1=F
\]
outside the collar.  Since $\chi=0$ near the inner edge \(\partial X_1\times\{\varepsilon\}\),
\[
 F_1\bigl(c_1(y,r)\bigr)
 =
 c_2(k_0(y),r)
 =
 F\bigl(c_1(y,r)\bigr)
\]
there, so the two definitions glue to a global homeomorphism
\[
 F_1:X_1\longrightarrow X_2.
\]
Since $\chi=1$ near $r=0$, its boundary restriction is
\[
 F_1|_{\partial X_1}=k_1.
\]

Finally, for $0\leq s\leq1$, define on the collar
\[
 F_s\bigl(c_1(y,r)\bigr)
 :=
 c_2\bigl(k_{s\chi(r)}(y),r\bigr),
\]
and put
\[
 F_s=F
\]
outside the collar.  Again the definitions agree near the inner edge \(\partial X_1\times\{\varepsilon\}\) because
$\chi=0$ there.  Thus $\{F_s\}_{0\leq s\leq1}$ is an isotopy through
homeomorphisms from $F_0=F$ to $F_1$, and
\[
 F_s|_{\partial X_1}=k_s.
\]
\end{proof}

\begin{lemma}[Oriented characteristic direction]
\label{lem:symp-characteristic}
Let $(X_i,\omega_i=d\lambda_i)$ be compact exact symplectic four-manifolds with
convex boundary $Y_i$, and put $\alpha_i=\lambda_i|_{Y_i}$.  If
\[
 F:(X_1,\omega_1)\longrightarrow(X_2,\omega_2)
\]
is a symplectomorphism, then $F|_{Y_1}$ carries the characteristic line
$\ker(d\alpha_1)$ to $\ker(d\alpha_2)$ and preserves its orientation, where
that line is oriented by the condition $\alpha_i>0$.  In particular, if these oriented characteristic lines integrate to circle foliations whose
oriented leaves represent $[t_i]$, then
\[
 (F|_{Y_1})_*[t_1]=[t_2].
\]
\end{lemma}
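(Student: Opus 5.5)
The argument is pointwise linear algebra on the boundary. Since $F$ is a diffeomorphism of manifolds with boundary, it restricts to a diffeomorphism $f=F|_{Y_1}:Y_1\to Y_2$. From $F^*\omega_2=\omega_1$ we get $f^*(\omega_2|_{TY_2})=\omega_1|_{TY_1}$. Convexity gives $\omega_i|_{TY_i}=d\alpha_i$. Hence $f^*d\alpha_2=d\alpha_1$, and $df$ carries $\ker(d\alpha_1)$ onto $\ker(d\alpha_2)$; each kernel is one-dimensional by the characteristic-line footnote. This settles the unoriented statement.

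For the orientation, I would not compare $\alpha_1$ with $f^*\alpha_2$ directly, since $F$ need not preserve the primitives. Instead I use the top form. Let $R_i$ be the Reeb field of $\alpha_i$, which spans $\ker d\alpha_i$ with $\alpha_i(R_i)=1$. Then $df(R_1)=\mu R_2$ for a nowhere-vanishing function $\mu$, and I must show $\mu>0$.

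First, $F$ preserves the orientation of $X$, because $F^*\omega_2^2=\omega_1^2$ and each $\omega_i^2$ is the symplectic orientation. Both boundaries carry the outward-normal-first orientation, and $F$ maps outward vectors to outward vectors because it maps the interior to the interior. So $f$ is orientation preserving, and by the conventions section this orientation is the one with $\alpha_i\wedge d\alpha_i>0$.

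Now take a positive basis $(R_1,v,w)$ of $T_pY_1$ with $v,w\in\xi_1$. Then
\[
 0<(\alpha_1\wedge d\alpha_1)(R_1,v,w)=d\alpha_1(v,w)=d\alpha_2(df\,v,df\,w).
\]
Since $\iota_{R_2}d\alpha_2=0$, we have
\[
 (\alpha_2\wedge d\alpha_2)(df R_1,df v,df w)=\mu\,d\alpha_2(df v,df w).
\]
The left-hand side is positive because $f$ is orientation preserving, and $d\alpha_2(df v,df w)$ is positive by the first display. Therefore $\mu>0$.

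Consequently $f$ maps oriented characteristic leaves to oriented characteristic leaves. In the circle-foliation case, $f$ sends a positively oriented leaf representing $[t_1]$ to a positively oriented leaf. All leaves of a connected circle-fibered $Y_2$ are freely homotopic as oriented loops, so this image represents $[t_2]$, giving $(F|_{Y_1})_*[t_1]=[t_2]$.

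The only delicate step is orientation preservation of $f$. Here one must check that the outward-normal-first convention is used on both sides, and that $F$ preserves the interior side; the latter holds automatically for a diffeomorphism of manifolds with boundary.
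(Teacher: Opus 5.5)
Your proposal is correct and is essentially the paper's argument: the paper sets $\beta=(F|_{Y_1})^*\alpha_2$ and proves $\beta(v)>0$ for a positive generator $v$ of $\ker(d\alpha_1)$ by evaluating the positive form $\beta\wedge d\alpha_1=(F|_{Y_1})^*(\alpha_2\wedge d\alpha_2)$ on a positive frame $(v,w_1,w_2)$, and your coefficient $\mu$ is exactly $\beta(R_1)$. You additionally spell out why $F|_{Y_1}$ preserves the boundary orientation and how the oriented-leaf statement gives $(F|_{Y_1})_*[t_1]=[t_2]$, both of which the paper leaves implicit; one small correction is that $\omega_i|_{TY_i}=d\alpha_i$ follows from exactness alone, while convexity is what makes $\alpha_i$ a contact form with $\alpha_i\wedge d\alpha_i>0$.
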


\begin{proof}
Set
\[
 \beta=(F|_{Y_1})^*\alpha_2.
\]
Since $F$ is symplectic,
\[
 d\beta=(F|_{Y_1})^*(\omega_2|_{Y_2})
       =\omega_1|_{Y_1}=d\alpha_1.
\]
The map $F|_{Y_1}$ preserves the boundary orientation, and hence both
$\alpha_1\wedge d\alpha_1$ and $\beta\wedge d\alpha_1$ are positive volume
forms on $Y_1$.  Let $v$ be a positive generator of
$\ker(d\alpha_1)$, so $\alpha_1(v)>0$. 
Choose
$w_1,w_2\in TY_1$ such that
\[
 d\alpha_1(w_1,w_2)>0.
\]
Since $v\in\ker(d\alpha_1)$, we have
\[
 d\alpha_1(v,\cdot)=0.
\]
Therefore
\[
 (\beta\wedge d\alpha_1)(v,w_1,w_2)
 =
 \beta(v)\,d\alpha_1(w_1,w_2).
\]
The forms $\beta\wedge d\alpha_1$ and
$\alpha_1\wedge d\alpha_1$ determine the same positive orientation on
$Y_1$, so the left-hand side is positive.  Since
$d\alpha_1(w_1,w_2)>0$, it follows that
\[
 \beta(v)>0.
\]

We next show that $F_*v \in \ker (d\alpha_2)$.  For
any $w\in T_{F(p)}Y_2$, choose $u\in T_pY_1$ such that
\[
 w=F_*u.
\]
Using
\[
 d\alpha_i=\omega_i|_{TY_i}
 \qquad\text{and}\qquad
 F^*\omega_2=\omega_1,
\]
we obtain
\[
\begin{aligned}
 d\alpha_2(F_*v,w)
 &=
 d\alpha_2(F_*v,F_*u)\\
 &=
 \omega_2(F_*v,F_*u)\\
 &=
 \omega_1(v,u)\\
 &=
 d\alpha_1(v,u)
 =0.
\end{aligned}
\]
Hence
\[
 F_*v\in\ker(d\alpha_2).
\]
Moreover, since $\beta=F^*\alpha_2$,
\[
 \alpha_2(F_*v)
 =
 (F^*\alpha_2)(v)
 =
 \beta(v)
 >0.
\]
Thus $F_*$ carries the positively oriented characteristic line of $Y_1$
to the positively oriented characteristic line of $Y_2$.
\end{proof}

\begin{theorem}[Non-symplectomorphism]\label{thm:nonsymp}
For the normalized Stein data of
Proposition~\ref{prop:normalized-data},
\[
 (X,\omega_+)\not\cong_{\mathrm{symp}}(X,\omega_-).
\]
\end{theorem}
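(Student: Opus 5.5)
We argue by contradiction: suppose $F:(X,\omega_+)\to(X,\omega_-)$ is a symplectomorphism. We will cap $F$ off over the divisor neighborhood to obtain a self-homeomorphism $G$ of $S$, and then show that $G$ would carry $K$ to $-K$.

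\textbf{Step 1: the boundary map.} Since $F$ is symplectic, it preserves the symplectic orientation. Both $\omega_\pm^2$ are positive for the common complex orientation, so $F$ is orientation preserving and hence preserves the convex boundary orientation of $Y$. By Proposition~\ref{prop:normalized-data}(ii), the oriented characteristic lines of $\alpha_\pm=\kappa_\pm^*\eta$ integrate to circle foliations whose oriented leaves represent $[t_\pm]$. Lemma~\ref{lem:symp-characteristic} then gives $(F|_Y)_*[t_+]=[t_-]$.

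\textbf{Step 2: capping.} Regard $h=F|_Y$ as a map from $\partial\nu(D_+)$ to $\partial\nu(D_-)$. The two orientations of $Y$ as a cap boundary agree, both being the negative of the convex orientation, so $h$ is still orientation preserving. By the discussion following Proposition~\ref{prop:capextension}, $h_*[u_+]=[u_-]$, and both $u_\pm$-oriented bundles have Euler number $+36$. Proposition~\ref{prop:capextension} therefore gives an isotopy from $h$ to a fiber-preserving $h_0$ that extends to a homeomorphism $\widehat h_0:\nu(D_+)\to\nu(D_-)$. This extension preserves the orientations of both $\nu$ and the zero section, where the zero-section orientations are those of $D_+$ and $D_-$ respectively.

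Lemma~\ref{lem:collar-adjustment} lets us adjust $F$ inside a collar so that $F|_Y=h_0$. Gluing the adjusted $F$ to $\widehat h_0$ then gives an orientation-preserving homeomorphism $G:S\to S$ with $G(D)=D$ and $G_*[D_+]=[D_-]=-[D_+]$. Here the underlying space $S=X\cup\nu(D)$ is the same on both sides, and its orientation is unchanged because $J$ and $-J$ induce the same orientation in complex dimension two.

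\textbf{Step 3: contradiction with rigidity.} Since $[D_+]$ is Poincaré dual to $2K$ and $G$ preserves orientation, $G_*[D]=-[D]$ means $G^*(2K)=-2K$ (up to the inverse, which is harmless). Because $H^2(S;\Z)$ modulo torsion is $\Z$ and $K$ is non-torsion, this gives $G^*K=-K+\text{torsion}$. Proposition~\ref{prop:KKsurface}(iii) says $G^*K=K$, so $2K$ would be torsion, contradicting $K^2=9$. Hence no such $F$ exists.

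The main obstacle is the orientation bookkeeping in Steps 1–2. We must check carefully that $F$ restricted to the boundary preserves the orientation of $Y$ in both conventions, and that the fiber-class hypothesis of Proposition~\ref{prop:capextension} transfers from $t_\pm$ to $u_\pm$ via table~\eqref{eq:orientationtable}. The decisive sign is $[D_-]=-[D_+]$: it is what turns canonical-class rigidity into a contradiction, so the zero-section orientation statement in Proposition~\ref{prop:capextension} must be tracked precisely.
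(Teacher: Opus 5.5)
Your proposal is correct and follows the paper's proof essentially step for step. It uses Lemma~\ref{lem:symp-characteristic} to get $(F|_Y)_*[t_+]=[t_-]$, passes to $u_\pm=t_\pm^{-1}$, and applies cap extension and the collar adjustment to produce an orientation-preserving $G:S\to S$ with $G_*[D_+]=[D_-]=-[D_+]$, which contradicts Proposition~\ref{prop:KKsurface}(iii); your explicit orientation check on $F$ and your torsion phrasing of the last step are only cosmetic differences.
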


\begin{proof}
Suppose that
\[
 F:(X,\omega_+)\longrightarrow(X,\omega_-)
\]
is a symplectomorphism.  By
Proposition~\ref{prop:normalized-data}\textup{(ii)}, the positively oriented characteristic circles of
$\omega_\pm|_Y=d\alpha_\pm$ represent the classes $[t_\pm]$.  Lemma~\ref{lem:symp-characteristic} therefore gives
\[
 (F|_Y)_*[t_+]=[t_-].
\]
Since $u_\pm=t_\pm^{-1}$ by \eqref{eq:orientationtable}, the same boundary map
preserves the oriented disk-boundary fibers $u_\pm$.  Regard $F|_Y$ as a map
from $\partial\nu(D_+)$ to $\partial\nu(D_-)$.  The simultaneous reversal from
the convex-boundary orientation to the cap-boundary orientation occurs on both
sides, so this map is orientation preserving.  Proposition~\ref{prop:capextension}
then gives an isotopy from $F|_Y$ to a fiber-preserving homeomorphism $h_0$ and
an orientation-preserving cap map
\[
 \widehat h_0:\nu(D_+)\longrightarrow\nu(D_-)
\]
which maps the oriented zero section $D_+$ to the oriented zero section $D_-$.

Apply Lemma~\ref{lem:collar-adjustment} to the underlying homeomorphism of $F$
and this boundary isotopy.  We obtain an orientation-preserving homeomorphism
$F_0:X\to X$, equal to $F$ away from a collar, with $F_0|_Y=h_0$.  The full extension over the normal disk-bundle cap is therefore
\[
 \overline h_0
 =F_0\cup_{h_0}\widehat h_0:
 X\cup_Y\nu(D_+)\longrightarrow X\cup_Y\nu(D_-).
\]
Both source and target
are the fake projective plane $S$.  Thus $\overline h_0:S\to S$ is an
orientation-preserving self-homeomorphism carrying the oriented zero section
$D_+$ to $D_-$.

The divisor is bicanonical, so
\[
 \PD[D_+]=2K.
\]
For the conjugate complex structure the complex orientation of $D$ reverses,
and therefore
\[
 \PD[D_-]=-2K
\]
inside the same oriented four-manifold $S$.  Since
$(\overline h_0)_*[D_+]=[D_-]$, Poincar\'e duality gives
\begin{equation}\label{eq:capclassrelation}
 \overline h_0^{*}(-2K)=2K.
\end{equation}
On the other hand, Proposition~\ref{prop:KKsurface} gives
$\overline h_0^{*}K=K$.  Equation~\eqref{eq:capclassrelation} would imply
$-2K=2K$, hence $4K=0$, contradicting $K^2=9$.
\end{proof}

The proof used only the oriented characteristic foliation of the normalized
boundary forms and the Waldhausen--Chen--Tshishiku cap-extension input.  In
particular, neither monopole Floer homology nor ECH enters
Theorem~\ref{thm:nonsymp}.

\section{Monopole Floer homology detects the positive fiber class}\label{sec:monopole}

This section supplies the additional boundary rigidity needed only for
part~\textup{(v)} of Theorem~\ref{thm:main}.  
Let
\[
 \pi:Y\longrightarrow\Sigma_g
\]
be an oriented principal circle bundle of Euler number $-n<0$ over a closed oriented surface of genus $g$.  Let $t$ denote
the positive fiber in $\pi_1(Y)$ and set
\[
 \mathfrak f=[t]\in H_1(Y;\Z).
\]
Let $\xi$ be the positive prequantization contact structure and
$\mathfrak{s}_\xi$ its $\SpinC$ structure.  For $r\in\Z/n$, define
\[
 \mathfrak{s}_{\xi,r}=\mathfrak{s}_\xi+\PD(r\mathfrak f).
\]
\begin{unnumberedlemma}[Torsion of the relevant first Chern classes]
For every $r\in\Z/n$, the classes
\[
 c_1(\xi),
 \qquad
 \PD(\mathfrak f),
 \qquad
 c_1(\mathfrak{s}_{\xi,r})
\]
are torsion.  More precisely, if
$\mu\in H^2(\Sigma_g;\Z)$ is the positive generator, then
\[
 n\,\pi^*\mu=0,
 \qquad
 c_1(\xi)=(2-2g)\pi^*\mu,
\]
and
\begin{equation}\label{eq:spinc-c1-torsion}
 c_1(\mathfrak{s}_{\xi,r})
 =c_1(\xi)+2\PD(r\mathfrak f).
\end{equation}
\end{unnumberedlemma}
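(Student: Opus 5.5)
The plan is to compute everything by pulling back along $\pi$ and to use the Gysin sequence, as in Lemma~\ref{lem:center} and the proof of Proposition~\ref{prop:capextension}.

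\emph{Step 1: $n\,\pi^*\mu=0$.} The cohomological Gysin sequence contains
\[
 H^0(\Sigma_g;\Z)\xrightarrow{\smile e}H^2(\Sigma_g;\Z)\xrightarrow{\pi^*}H^2(Y;\Z),
\]
and $e=-n\mu$. Exactness gives $\pi^*(n\mu)=0$. Hence every class in the image of $\pi^*$ is annihilated by $n$.

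\emph{Step 2: the fiber class.} Under Poincar\'e duality, $\PD(\mathfrak f)$ is the pullback of the Poincar\'e dual of a point. A fiber is the preimage of a point, so $\PD(\mathfrak f)=\pi^*\mu$, up to the orientation sign fixed by the fiber-first convention. In either case $n\PD(\mathfrak f)=0$. This matches Lemma~\ref{lem:center}(ii), where $\mathfrak f$ generates $\Z/n$.

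\emph{Step 3: $c_1(\xi)$.} Because the Reeb field $T$ generates the fibers and $\iota_T d\eta=0$, the projection $d\pi$ restricts to a fiberwise isomorphism $\xi\to\pi^*T\Sigma_g$. This isomorphism is complex linear for a $d\eta$-compatible $J$ on $\xi$ together with a compatible structure on the base, since $d\eta=\pi^*\Omega$ is positive. Hence $\xi\cong\pi^*T\Sigma_g$ as oriented plane bundles, and therefore
\[
 c_1(\xi)=\pi^*c_1(T\Sigma_g)=\pi^*e(T\Sigma_g)=(2-2g)\pi^*\mu.
\]
By Step~1 this class is torsion.

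\emph{Step 4: the twisted $\SpinC$ structures.} For a $\SpinC$ structure on a $3$-manifold, twisting by a class $a$ changes the determinant line by $2a$. Therefore
\[
 c_1(\mathfrak s_\xi+a)=c_1(\mathfrak s_\xi)+2a.
\]
With $c_1(\mathfrak s_\xi)=c_1(\xi)$, this is \eqref{eq:spinc-c1-torsion}. The right-hand side is a sum of torsion classes, so it is torsion.

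The only delicate point is the orientation sign in Step~2, and torsionness does not depend on it.
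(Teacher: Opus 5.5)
Your proof is correct and follows essentially the same route as the paper: the isomorphism $d\pi|_\xi:\xi\cong\pi^*T\Sigma_g$, the Gysin sequence with $e=-n\mu$ giving $n\,\pi^*\mu=0$, and the rule $c_1(\mathfrak s+a)=c_1(\mathfrak s)+2a$. The only variation is Step~2, where you identify $\PD(\mathfrak f)=\pm\pi^*\mu$ directly instead of citing Lemma~\ref{lem:center}(ii); this is valid, and it makes the $n$-torsion of $\PD(\mathfrak f)$ an immediate consequence of Step~1.
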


\begin{proof}
Since the Boothby--Wang contact structure is the horizontal distribution of
the connection, the restriction of the bundle projection gives a natural
isomorphism
\[
 d\pi|_\xi:\xi\xrightarrow{\cong}\pi^*T\Sigma_g.
\]
Hence
\[
 c_1(\xi)=\pi^*c_1(T\Sigma_g).
\]
The relevant part of the cohomological Gysin sequence is
\[
 H^0(\Sigma_g;\Z)
 \xrightarrow{\ \smile e\ }
 H^2(\Sigma_g;\Z)
 \xrightarrow{\ \pi^*\ }
 H^2(Y;\Z),
\]
where $e\in H^2(\Sigma_g;\Z)$ is the Euler class of the circle bundle.
Let $\mu\in H^2(\Sigma_g;\Z)$ be the generator satisfying
\[
 \langle\mu,[\Sigma_g]\rangle=1.
\]
Since the Euler number is $-n$, one has
\[
 e=-n\mu.
\]
Exactness therefore gives
\[
 \ker\pi^*=n\Z\,\mu,
\]
and in particular
\[
 n\,\pi^*\mu=0.
\]
Since
\[
 c_1(T\Sigma_g)=(2-2g)\mu,
\]
it follows that
\[
 c_1(\xi)=(2-2g)\pi^*\mu
\]
is torsion.

By Lemma~\ref{lem:center}\textup{(ii)}, the fiber class
$\mathfrak f$ generates the torsion summand $\Z/n\subset H_1(Y;\Z)$.
Thus $\PD(\mathfrak f)$ is torsion.  Finally, the affine action of
$H^2(Y;\Z)$ on $\SpinC(Y)$ satisfies
\[
 c_1(\mathfrak s+a)=c_1(\mathfrak s)+2a.
\]
Since $c_1(\mathfrak s_\xi)=c_1(\xi)$ and
$\mathfrak{s}_{\xi,r}=\mathfrak s_\xi+\PD(r\mathfrak f)$, this gives
\eqref{eq:spinc-c1-torsion}.  It is a sum of torsion classes and is therefore
torsion.
\end{proof}

Therefore the monopole Floer groups used below carry relative
$\Z$-gradings.  We use the  $\mathbb{F}_2$ coefficient version of the monopole Floer cohomology group denoted
$\widehat{HM}^{\bullet}$ in \cite{KronheimerMrowka}, with contravariant
diffeomorphism maps
\[
 h^*: \widehat{HM}^{\bullet}(Y_2,\mathfrak s_2)
 \longrightarrow
 \widehat{HM}^{\bullet}(Y_1,h^*\mathfrak s_2)
\]
for $h: Y_1 \to Y_2$.
For a torsion $\SpinC$ structure, an orientation-preserving diffeomorphism
preserves relative integer grading differences, without reversing their sign;
an overall translation of an absolute lift is irrelevant here.  The induced map is the
cobordism map of the mapping cylinder.  For the product $\SpinC$ structure,
the Euler characteristic and signature vanish, and the torsion first Chern
class has square zero.  The standard cobordism degree-shift formula therefore
gives shift zero.  This is the functoriality developed in
\cite[Chapter~VII, Section~25]{KronheimerMrowka}, together with the relative
and canonical grading conventions of
\cite[Chapter~VIII, Section~28]{KronheimerMrowka}.

We write $\HM^{-*}$ for the same cohomology group with its cohomological
grading reindexed by $k\mapsto-k$.  Thus Taubes's isomorphism is grading
preserving when its target is written as $\HM^{-*}$.

Two relatively $\Z$-graded vector spaces are called isomorphic if there is a
linear isomorphism which preserves all grading differences; an overall
translation of the grading is allowed, but a sign reversal is not.  The ECH
groups below are bounded below and have a unique lowest class in the two
homology classes of interest.  Their $\HM^{-*}$ counterparts have the same
ordering, whereas the unreversed monopole Floer grading has a unique highest
class.  We write $\ECH_*^{\rm norm}$ for the ECH grading translated so that
the unique lowest class has degree zero.

The calculation is most explicit on the ECH side, where Nelson--Weiler give
both generators and relative degrees.  In principle, contactomorphism
naturality of ECH would already suffice for the fiber-class obstruction.
We pass through Taubes's isomorphism in order to formulate the obstruction in
Seiberg--Witten Floer theory and to use the standard diffeomorphism naturality
of $\widehat{HM}$.  A proof using only $\widehat{HM}$ would require a direct
monopole Floer computation of the same relative grading profiles; no such
independent computation is attempted here.  The relative $\Z$-grading used
below remains the one justified above by the torsion of
$c_1(\mathfrak s_{\xi,r})$.

\begin{theorem}[Nelson--Weiler and Taubes]\label{thm:HMgradedformula}
Fix an integer representative $0\leq r\leq n-1$ of the class $r\mathfrak f$.  There is a
relatively $\Z$-graded isomorphism
\begin{equation}\label{eq:ECH-HM-isom}
 \ECH_*(Y,\xi,r\mathfrak f;\F)
 \cong
 \HM^{-*}(Y,\mathfrak{s}_\xi+\PD(r\mathfrak f);\F).
\end{equation}
As a vector space, the left-hand side is
\begin{equation}\label{eq:gradeddecomposition}
 \bigoplus_{d\geq0}\mathcal{A}_{r+nd},
\end{equation}
where $\mathcal{A}_M$ denotes the graded vector space identified by
Nelson--Weiler with the corresponding ECH homology group and has a basis of
monomials
\begin{equation}\label{eq:monomial}
 e_-^{m_-}h_{i_1}\cdots h_{i_j}e_+^{m_+},
 \qquad
 m_-+j+m_+=M,
 \qquad
 1\leq i_1<\cdots<i_j\leq2g.
\end{equation}
By the ECH admissibility condition, the elliptic generators $e_-$ and $e_+$ may occur with arbitrary
multiplicity, whereas each hyperbolic generator $h_i$ occurs at most once.
These monomials describe a basis of the computed homology group, not merely
of an uncomputed chain complex.  Every fixed internal degree contains only
finitely many such monomials.  Put
\[
 q=j+2m_+.
\]
If the reference monomial $e_-^r\in\mathcal{A}_r$ is assigned ECH degree zero
(with $e_-^0=1$ the empty orbit-set generator), then a monomial in the summand
$\mathcal{A}_{r+nd}$ has relative ECH degree
\begin{equation}\label{eq:relative-degree}
 I_r(d,q)=nd^2+(\chi(\Sigma_g)+2r-n)d+q.
\end{equation}
\end{theorem}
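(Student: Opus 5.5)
The proof assembles two external inputs and then performs one index computation. The isomorphism \eqref{eq:ECH-HM-isom} comes from Taubes's ECH--Seiberg--Witten correspondence. The vector-space description \eqref{eq:gradeddecomposition}--\eqref{eq:monomial} comes from the Nelson--Weiler computation for prequantization bundles. Only the grading formula \eqref{eq:relative-degree} needs to be checked against our conventions.

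\emph{Step 1: Taubes's isomorphism.} First invoke Taubes's theorem: for a closed contact $3$--manifold and $\Gamma\in H_1(Y;\Z)$ there is an isomorphism $\ECH_*(Y,\xi,\Gamma)\cong\HM^{-*}(Y,\mathfrak s_\xi+\PD(\Gamma))$. This holds with $\F$ coefficients, and it respects relative gradings when $c_1(\mathfrak s_\xi+\PD\Gamma)$ is torsion. By the unnumbered torsion lemma, $c_1(\mathfrak s_{\xi,r})$ is torsion, so both sides carry relative $\Z$-gradings. Taubes's grading compatibility, with our reindexing convention $\HM^{-*}$, then gives \eqref{eq:ECH-HM-isom} as a relatively graded isomorphism.

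\emph{Step 2: generators.} Perturb the degenerate Boothby--Wang form $\eta$ by a perfect Morse function $H$ on $\Sigma_g$, so that the form becomes $(1+\epsilon\pi^*H)\eta$. Below a given action bound, the Reeb orbits are then the fibers over the critical points. These are two elliptic orbits $e_\pm$, over the minimum and the maximum, and $2g$ hyperbolic orbits $h_i$, over the index-one points. Orbit sets are admissible products in which hyperbolic orbits have multiplicity at most one. Each fiber represents $\mathfrak f$, and $\mathfrak f$ has order $n$ by Lemma~\ref{lem:center}(ii). Hence the class $r\mathfrak f$ is realized exactly by total multiplicities $M=r+nd$ with $d\ge0$, which gives \eqref{eq:gradeddecomposition}. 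Nelson--Weiler show that the ECH differential on these generators vanishes, after the direct-limit argument in the action filtration. This is why the monomials \eqref{eq:monomial} form a basis of homology, and each degree is finite-dimensional by the degree formula.

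\emph{Step 3: the index formula, the main obstacle.} Compute $I(\alpha,e_-^r)$ as $c_\tau(Z)+Q_\tau(Z)+\mathrm{CZ}^I_\tau$, using the trivialization $\tau$ pulled back from the horizontal bundle $\xi\cong\pi^*T\Sigma_g$.
\begin{itemize}
\item \emph{Relative second homology class.} Its base part contributes the $d$-linear term $\chi(\Sigma_g)d$ through $c_\tau$.
\item \emph{Relative self-intersection.} The term $Q_\tau$ is quadratic in the total multiplicity. A class going from multiplicity $r$ to multiplicity $r+nd$ covers the base $d$ times with Euler number $-n$. This gives $n d^2+(2r-n)d$, once the $d(d-1)n/2$-type linking terms of the iterates are collected.
\item \emph{Conley--Zehnder terms.} With $\tau$, these are $0$ for $e_-$, $1$ for each $h_i$, and $2$ for each $e_+$ multiplicity, up to the small rotation. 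This produces $q=j+2m_+$.
\end{itemize}
I would do this by following Nelson--Weiler's own computation and matching their sign conventions, notably their Euler number $-n$ versus ours via \eqref{eq:eulerconvention}. The expected difficulty is tracking signs and the writhe/linking bounds, rather than anything conceptual. The normalization $I_r(0,0)=0$ for $e_-^r$ then fixes the constant.
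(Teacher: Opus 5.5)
Your Steps 1 and 2 follow the paper. It also takes Taubes's isomorphism in the form quoted in Nelson--Weiler's Equation (7.6), which fixes the plus sign in $\mathfrak s_\xi+\PD(\Gamma)$, and reads the basis off their decomposition. For \eqref{eq:relative-degree}, however, the paper recomputes no ECH index. It quotes Nelson--Weiler's formula (1.4),
$|\alpha|-|\beta|=-e(d_\alpha^2-d_\beta^2)+(\chi(\Sigma_g)+2\Gamma+e)(d_\alpha-d_\beta)+|\alpha|_\bullet-|\beta|_\bullet$,
and substitutes $e=-n$, $\Gamma=r$, $\beta=e_-^r$, $d_\beta=0$, $|\beta|_\bullet=0$. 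That substitution is the entire proof, and it is what your Step 3 should reduce to.

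As written, your Step 3 sketch is wrong term by term, even though its total is right. In any trivialization, an elliptic orbit has odd Conley--Zehnder index and a positive hyperbolic orbit has even index. So $0,1,2$ cannot be the Conley--Zehnder indices of $e_-$, $h_i$, $e_+$; they are the internal degrees $|\cdot|_\bullet$.

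For the pulled-back trivialization and iterates below the action bound, $\mathrm{CZ}_\tau(\gamma_p^k)=\operatorname{ind}_p(H)-1$. Hence the Conley--Zehnder sum relative to $e_-^r$ is $(m_+-m_-)+r=q-nd$. Correspondingly $Q_\tau(Z)=\bigl((r+nd)^2-r^2\bigr)/n=nd^2+2rd$, not $nd^2+(2r-n)d$, and $c_\tau(Z)=\chi(\Sigma_g)d$.

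Your two misattributions cancel: you raised each Conley--Zehnder value by one per iterate, and you subtracted $nd$ from $Q_\tau$. That is why you land on the correct formula. If you want an independent derivation, you must redo this bookkeeping; otherwise simply cite (1.4) as the paper does.
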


\begin{proof}
We spell out the translation because every sign will be used later.  Nelson and
Weiler take the Euler class of a prequantization bundle to be the negative
integer $e<0$ and prove
\begin{equation}\label{eq:NW-decomposition}
 \ECH_*(Y,\xi,\Gamma;\F)
 \cong
 \bigoplus_{d\geq0}
 \Lambda^{\Gamma+(-e)d}H_*(\Sigma_g;\F).\footnote{Nelson--Weiler use the symbol $\Gamma$ in two closely related
senses.  In $\ECH_*(Y,\xi,\Gamma;\F)$ it denotes a homology class
$\Gamma\in H_1(Y;\Z)$.  For the prequantization bundle considered here,
the relevant classes are torsion fiber classes
\[
 \Gamma=r\mathfrak f,
 \qquad
 r\in\Z/(-e),
\]
and, after choosing the representative
$0\leq r\leq -e-1$, the same symbol $\Gamma$ is also used for this integer
representative.  Thus the exponent in
\[
 \Lambda^{\Gamma+(-e)d}H_*(\Sigma_g;\F)
\]
is an integer, not a homology class.  In our notation we avoid this
ambiguity by writing the exponent as $r+(-e)d$, and hence as $r+nd$ when
$e=-n$.}
\end{equation}
For our convention \eqref{eq:eulerconvention}, a positive-curvature connection
has Euler number $e=-n$.  The fiber class has order $n$, and for the torsion
class $r\mathfrak f$ we choose the representative $\Gamma=r\in\{0,\ldots,n-1\}$.
Thus the multiplicity in \eqref{eq:NW-decomposition} is
\[
 \Gamma+(-e)d=r+nd.
\]
Their graded exterior-power notation is the object described by
\eqref{eq:monomial}: the degree-zero and degree-two elliptic generators may be
repeated, while ECH admissibility allows each positive hyperbolic generator at
most once.

Nelson--Weiler's relative grading formula \cite[Theorem 1.1, formula
(1.4)]{NelsonWeiler} is
\begin{align}
 |\alpha|_{\mathrm{ECH}}-|\beta|_{\mathrm{ECH}}
 &=-e(d_\alpha^2-d_\beta^2)
 +(\chi(\Sigma_g)+2\Gamma+e)(d_\alpha-d_\beta)
 \notag\\
 &\qquad+|\alpha|_\bullet-|\beta|_\bullet.
 \label{eq:NW-original-grading}
\end{align}
\footnote{Here $d_\alpha,d_\beta\in\Z_{\geq0}$ record the summands in the
Nelson--Weiler decomposition.  More precisely, if $\gamma$ is either
$\alpha$ or $\beta$ and belongs to
\[
 \Lambda^{\,\Gamma+(-e)d_\gamma}H_*(\Sigma_g;\F),
\]
then $d_\gamma$ is the corresponding nonnegative integer.  Equivalently,
if
\[
 \gamma
 =
 e_-^{m_-}h_{i_1}\cdots h_{i_j}e_+^{m_+}
\]
has total multiplicity
\[
 M_\gamma=m_-+j+m_+,
\]
then
\[
 M_\gamma=\Gamma+(-e)d_\gamma,
 \qquad
 d_\gamma=\frac{M_\gamma-\Gamma}{-e}.
\]
Here $\Gamma$ denotes the chosen integer representative of the relevant
torsion fiber class, rather than the homology class itself.

The symbol $|\gamma|_\bullet$ denotes the internal grading coming from the
ordinary homological grading on $H_*(\Sigma_g;\F)$: the generator $e_-$
corresponding to $H_0(\Sigma_g;\F)$ has degree $0$, each hyperbolic
generator $h_i$ corresponding to $H_1(\Sigma_g;\F)$ has degree $1$, and
the generator $e_+$ corresponding to $H_2(\Sigma_g;\F)$ has degree $2$.
Thus
\[
 |\gamma|_\bullet=j+2m_+.
\]
In particular,
\[
 |\alpha|_\bullet-|\beta|_\bullet
\]
is the difference of these internal homological degrees.}
Take $\beta=e_-^r$.  Then $d_\beta=0$ and
$|\beta|_\bullet=0$.  For a monomial $\alpha$ in
$\mathcal{A}_{r+nd}$ one has $d_\alpha=d$ and
$|\alpha|_\bullet=q=j+2m_+$.  Substituting $e=-n$ and $\Gamma=r$ into
\eqref{eq:NW-original-grading} gives
\[
 |\alpha|_{\mathrm{ECH}}-|e_-^r|_{\mathrm{ECH}}
 =nd^2+(\chi(\Sigma_g)+2r-n)d+q,
\]
which is \eqref{eq:relative-degree}.

For the $\SpinC$ label, Nelson--Weiler use
\[
 \mathfrak{s}_{\xi,\Gamma}=\mathfrak{s}_\xi+\PD(\Gamma)
\]
and quote Taubes's isomorphism in the form
\[
 \ECH_*(Y,\xi,\Gamma;\F)
 \cong
 \HM^{-*}(Y,\mathfrak{s}_{\xi,\Gamma};\F);
\]
see \cite[Equation (7.6)]{NelsonWeiler} and
\cite{TaubesECHSWI,TaubesECHSWII,TaubesECHSWIII,TaubesECHSWIV,TaubesECHSWV}.
This fixes the plus sign in the $\SpinC$ shift.  With the convention above,
the isomorphism is relative-degree preserving as a map to $\HM^{-*}$ and
relative-degree reversing as a map to the unreindexed cohomological grading.
The relative $\Z$-grading is available by
\eqref{eq:spinc-c1-torsion}; compare \cite[Remark 1.2]{NelsonWeiler}.
\end{proof}

For our boundary,
\begin{equation}\label{eq:boundarydataFloer}
 g=28,
 \qquad
 n=36,
 \qquad
 \chi(\Sigma_g)=-54.
\end{equation}

The next proposition is the numerical input that distinguishes the two
possible orientations of the fiber.

\begin{proposition}[Relative-grading asymmetry]\label{prop:HMgradingasymmetry}
The groups
\begin{equation}\label{eq:two-HM-groups}
 \HM^{-*}(Y,\mathfrak{s}_\xi+\PD(\mathfrak f);\F),
 \qquad
 \HM^{-*}(Y,\mathfrak{s}_\xi-\PD(\mathfrak f);\F)
\end{equation}
are not isomorphic as relatively $\Z$-graded vector spaces.  After translating
the two relative gradings so that their unique lowest ECH classes have degree
zero, their graded dimensions agree in degrees $0,\ldots,15$ and differ by one
in degree $16$.
\end{proposition}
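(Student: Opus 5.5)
The plan is to apply Theorem~\ref{thm:HMgradedformula} twice, with the boundary data \eqref{eq:boundarydataFloer}, and to compare the graded dimensions directly on the ECH side. Since the isomorphism \eqref{eq:ECH-HM-isom} preserves relative degrees as a map to $\HM^{-*}$, it suffices to treat the ECH groups. The first group in \eqref{eq:two-HM-groups} corresponds to $r=1$. For the second I use $-\PD(\mathfrak f)=\PD(35\mathfrak f)$, which holds because $\mathfrak f$ has order $n=36$ by Lemma~\ref{lem:center}(ii); so it corresponds to $r=35$. Substituting $n=36$ and $\chi(\Sigma_g)=-54$ into \eqref{eq:relative-degree} gives
\[
 I_1(d,q)=36d^2-88d+q,\qquad I_{35}(d,q)=36d^2-20d+q,
\]
with $q=j+2m_+\geq0$, and $q=0$ realized only by the monomial $e_-^{M}$.

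For $r=1$ the summands $d=0,1,2,3,\dots$ have minimal degrees $0,-52,-32,60,\dots$, and these minima increase for $d\geq2$. So the unique lowest class is $e_-^{37}$, in degree $-52$. After normalizing this to degree $0$, the summand $\mathcal A_{37}$ contributes in degree $q$, $\mathcal A_{73}$ in degrees $q+20$, $\mathcal A_1$ in degrees $q+52$, and the remaining summands contribute only higher. Hence in normalized degrees $k\leq19$ the graded dimension is $\dim(\mathcal A_{37})_q$ with $q=k$. For $r=35$ the minima are $0,16,104,\dots$. So the unique lowest class is $e_-^{35}$, and in normalized degrees $k\leq16$ the dimension is $\dim(\mathcal A_{35})_k$, plus $\dim(\mathcal A_{71})_{k-16}$ when $k=16$. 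The last term equals $1$, coming from $e_-^{71}$ alone.

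The key combinatorial step is to show that $\dim(\mathcal A_M)_q$ is independent of $M$ once $M\geq q$. Using \eqref{eq:monomial}, a monomial of internal degree $q$ is determined by a choice of $(j,m_+)$ with $j+2m_+=q$, together with a subset of $j$ hyperbolic generators. The exponent $m_-=M-j-m_+$ is then forced, and it is nonnegative automatically because $j+m_+\leq q\leq M$. Hence
\[
 \dim(\mathcal A_M)_q=\sum_{j+2m_+=q}\binom{56}{j},
\]
and this count is independent of $M$. Since $35,37\geq16$, the two normalized profiles agree in degrees $0,\ldots,15$. In degree $16$ they differ by exactly the extra class $e_-^{71}$.

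The step I expect to need the most care is the conclusion that the groups are not isomorphic as relatively graded spaces. A relatively graded isomorphism may translate the grading, so the translation has to be pinned down. Both groups are bounded below, and each has a unique lowest class, in a degree with finitely many monomials. A grading-difference-preserving isomorphism is degreewise, so it sends the lowest nonzero degree to the lowest nonzero degree. It must therefore match the normalizations, and the discrepancy in degree $16$ then contradicts the existence of such an isomorphism. I would also double-check the sign bookkeeping, namely that the $\SpinC$ label $\mathfrak s_\xi+\PD(r\mathfrak f)$ matches the representative $r$ as in the proof of Theorem~\ref{thm:HMgradedformula}. The whole asymmetry rests on the linear coefficients $-88$ versus $-20$, so an error there would change the conclusion.
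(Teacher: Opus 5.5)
Your proposal is correct and follows essentially the same route as the paper's proof. It substitutes $r=1$ and $r=35$ into the Nelson--Weiler degree formula, identifies the unique lowest classes $e_-^{37}$ and $e_-^{35}$, and shows the count in internal degree $q$ is independent of $M$ once $M\geq q$. As in the paper, the extra class $e_-^{71}$ in normalized degree $16$ then gives the discrepancy, and the unique lowest class fixes the allowed translation, so no relatively graded isomorphism exists.
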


\begin{proof}
It suffices to compare the ECH gradings in
Theorem~\ref{thm:HMgradedformula}.
For the positive fiber class $r=1$, formula \eqref{eq:relative-degree} becomes
\begin{equation}\label{eq:Iplus}
 I_+(d,q)=36d^2-88d+q.
\end{equation}
The lower endpoints $q=0$ of the summands for $d=0,1,2,3$ are
\[
 0,\quad -52,\quad -32,\quad 60,
\]
and the quadratic expression increases thereafter.  Hence the least degree is
$-52$, attained uniquely at $(d,q)=(1,0)$ by the admissible monomial
$e_-^{37}$.  After translating by $52$, the $d=1$ summand begins in degree $0$,
the $d=2$ summand in degree $20$, and the $d=0$ summand in degree $52$.

For the negative fiber class, the equality $-\mathfrak f=(n-1)\mathfrak f$ gives the representative
$r=35$.  Formula \eqref{eq:relative-degree} becomes
\begin{equation}\label{eq:Iminus}
 I_-(d,q)=36d^2-20d+q.
\end{equation}
Its least degree is $0$, attained uniquely at $(d,q)=(0,0)$ by $e_-^{35}$.
The $d=1$ summand begins in degree $16$, and the $d=2$ summand begins in degree
$104$.

\begin{center}
\begin{tabular}{c|c|c|c}
\toprule
class & representative $r$ & normalized minimum & next new $d$-summand \\
\midrule
$+\mathfrak f$ & $1$ & $d=1$, $q=0$ & $d=2$ starts at $20$ \\
$-\mathfrak f$ & $35$ & $d=0$, $q=0$ & $d=1$ starts at $16$ \\
\bottomrule
\end{tabular}
\end{center}

For fixed total multiplicity $M$ and internal degree $q$, the dimension of the
corresponding subspace of $\mathcal A_M$ is
\begin{equation}\label{eq:monomial-count}
 C(M,q)=
 \sum_{\substack{0\leq j\leq\min(2g,q)\\
                  j\equiv q\; (\mathrm{mod}\,2)\\
                  M-(q+j)/2\geq0}}
 \binom{2g}{j}.
\end{equation}
Indeed, after choosing the $j$ distinct hyperbolic generators, the equations
$q=j+2m_+$ and $M=m_-+j+m_+$ determine $m_+$ and $m_-$ uniquely.  If $M\geq q$,
then $j\leq q$ gives $(q+j)/2\leq q\leq M$, so the final inequality is
automatic and $C(M,q)$ is independent of $M$.

For every $0\leq\ell<16$, normalized degree $\ell$ on the positive side comes
only from $d=1$, $q=\ell$, $M=37$, while on the negative side it comes only from
$d=0$, $q=\ell$, $M=35$.  Since $37,35\geq\ell$, the two dimensions are equal.
At degree $16$, the positive side has only the contribution
$(d,q,M)=(1,16,37)$.  The negative side has the contribution
$(d,q,M)=(0,16,35)$ of the same dimension and, in addition, the unique class
$(d,q,M)=(1,0,71)$.  Consequently,
\begin{equation}\label{eq:degree16}
 \dim_{\F}\ECH_{16}^{\mathrm{norm}}(Y,\xi,-\mathfrak f)
 =
 \dim_{\F}\ECH_{16}^{\mathrm{norm}}(Y,\xi,\mathfrak f)+1.
\end{equation}
Both relatively graded groups are bounded below, and their lowest-degree
subspaces are one-dimensional.  Hence any relatively graded isomorphism,
which is allowed to shift all degrees by an overall constant, must send the
lowest-degree subspace on one side to the lowest-degree subspace on the
other.  After normalizing both lowest degrees to zero, this overall shift is
therefore forced to be zero.  Such an isomorphism would consequently have
to preserve the dimension in every normalized degree.  Equation
\eqref{eq:degree16} shows that the dimensions in normalized degree $16$
differ by one, and hence no relatively graded isomorphism can exist.
\end{proof}
\begin{remark}[The normalized dimensions in degrees $0$ through $16$]
For $0\leq\ell\leq16$, put
\[
 b_\ell^{\pm}
 =\dim_{\F}\ECH_{\ell}^{\mathrm{norm}}
   (Y,\xi,\pm\mathfrak f;\F).
\]
For $0\leq\ell\leq15$, both dimensions equal
\[
 \sum_{\substack{0\leq j\leq\ell\\ j\equiv\ell\;({\rm mod}\,2)}}
 \binom{56}{j}.
\]
At degree $16$, the $-\mathfrak f$ side has the additional class from the
$d=1$, $q=0$ summand.  The resulting dimensions are as follows.
\begin{center}
\small
\renewcommand{\arraystretch}{1.06}
\begin{tabular}{r|r|r}
\toprule
normalized degree $\ell$ & $b_\ell^{+}$ & $b_\ell^{-}$ \\
\midrule
$0$ & $1$ & $1$ \\
$1$ & $56$ & $56$ \\
$2$ & $1{,}541$ & $1{,}541$ \\
$3$ & $27{,}776$ & $27{,}776$ \\
$4$ & $368{,}831$ & $368{,}831$ \\
$5$ & $3{,}847{,}592$ & $3{,}847{,}592$ \\
$6$ & $32{,}837{,}267$ & $32{,}837{,}267$ \\
$7$ & $235{,}764{,}992$ & $235{,}764{,}992$ \\
$8$ & $1{,}453{,}331{,}342$ & $1{,}453{,}331{,}342$ \\
$9$ & $7{,}811{,}733{,}392$ & $7{,}811{,}733{,}392$ \\
$10$ & $37{,}060{,}382{,}822$ & $37{,}060{,}382{,}822$ \\
$11$ & $156{,}713{,}948{,}672$ & $156{,}713{,}948{,}672$ \\
$12$ & $595{,}443{,}690{,}122$ & $595{,}443{,}690{,}122$ \\
$13$ & $2{,}046{,}626{,}681{,}072$ & $2{,}046{,}626{,}681{,}072$ \\
$14$ & $6{,}400{,}175{,}653{,}922$ & $6{,}400{,}175{,}653{,}922$ \\
$15$ & $18{,}299{,}876{,}179{,}712$ & $18{,}299{,}876{,}179{,}712$ \\
$16$ & $48{,}049{,}127{,}494{,}187$ & $48{,}049{,}127{,}494{,}188$ \\
\bottomrule
\end{tabular}
\end{center}

\end{remark}
\begin{theorem}[Positive fiber-class detection]\label{thm:fiberdetection}
Let $(Y,\xi_\pm)$ be the prequantization contact manifolds arising in
Corollary~\ref{cor:boundarynumbers}.  
Put
\[
 \mathfrak f_\pm=[t_\pm]\in H_1(Y;\Z).
\]
If
\[
 h:(Y,\xi_+)\longrightarrow(Y,\xi_-)
\]
is a coorientation-preserving contactomorphism, then
\[
 h_*[t_+]=[t_-]
\]
as oriented free-homotopy classes, and
\[
 h_*\mathfrak f_+=\mathfrak f_-
\]
in homology.
\end{theorem}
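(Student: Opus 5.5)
Since $h$ is a coorientation-preserving contactomorphism, it preserves the contact orientation of $Y$ and carries $\mathfrak s_{\xi_+}$ to $\mathfrak s_{\xi_-}$. Transport everything to the common model: set $\psi=\kappa_-\circ h\circ\kappa_+^{-1}$, a coorientation-preserving contactomorphism of $(Y_{28,-36},\xi_{\mathrm{BW}})$. It suffices to show $\psi_*[t]=[t]$, because $\kappa_\pm$ carry $[t_\pm]$ to $[t]$. By Lemma~\ref{lem:center}\textup{(iv)}, $\psi_*$ sends the central generator $t$ to $t^{\pm1}$, up to conjugation, i.e.\ as an oriented free-homotopy class. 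Hence $\psi_*\mathfrak f=\pm\mathfrak f$. The homology statement will follow from the homotopy statement, so the whole task is to exclude the sign $-1$.

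Suppose $\psi_*[t]=[t^{-1}]$, so $\psi_*\mathfrak f=-\mathfrak f$. Since $\psi^*\xi_{\mathrm{BW}}=\xi_{\mathrm{BW}}$ as cooriented plane fields, $\psi^*\mathfrak s_\xi=\mathfrak s_\xi$. Naturality of the $H^2$-action gives
\[
 \psi^*\bigl(\mathfrak s_\xi-\PD(\mathfrak f)\bigr)
 =\mathfrak s_\xi-\PD(\psi^{-1}_*\mathfrak f)
 =\mathfrak s_\xi+\PD(\mathfrak f).
\]
Here $\psi^{-1}_*\mathfrak f=-\mathfrak f$ as well, and $\PD$ commutes with pullback for orientation-preserving maps. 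The contravariant diffeomorphism map recalled before Theorem~\ref{thm:HMgradedformula} then yields an isomorphism
\[
 \psi^*:\HM^{\bullet}(Y,\mathfrak s_\xi-\PD(\mathfrak f);\F)\longrightarrow\HM^{\bullet}(Y,\mathfrak s_\xi+\PD(\mathfrak f);\F).
\]
This is the mapping-cylinder cobordism map with zero degree shift, and it preserves relative $\Z$-grading differences without sign reversal. Reindexing by $k\mapsto -k$ on both sides, it remains a relatively graded isomorphism of the $\HM^{-*}$ groups. This contradicts Proposition~\ref{prop:HMgradingasymmetry}. Therefore $\psi_*[t]=[t]$, hence $h_*[t_+]=[t_-]$, and abelianizing gives $h_*\mathfrak f_+=\mathfrak f_-$.

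The main obstacle I expect is the bookkeeping in the second step rather than any new input. First, the orientation used for $\HM$ must be the contact orientation on both sides; this holds because $\kappa_\pm$ and $h$ are contactomorphisms of positive structures. Second, the sign in the $\SpinC$ transport must be handled so that the two groups compared are exactly the $\pm\mathfrak f$ groups of \eqref{eq:two-HM-groups}. Third, one must check that diffeomorphism maps cannot reverse relative gradings; that is precisely why Proposition~\ref{prop:HMgradingasymmetry} is phrased for isomorphisms allowing only translations. I would verify the $\SpinC$ identity directly from Lemma~\ref{lem:spinc-conjugation}-style transition-function reasoning, namely $\psi^*(\mathfrak s+a)=\psi^*\mathfrak s+\psi^*a$. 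As an independent check, I would note that contactomorphism naturality of ECH also sends $\ECH(Y,\xi,-\mathfrak f)$ to $\ECH(Y,\xi,\mathfrak f)$ and gives the same contradiction directly.
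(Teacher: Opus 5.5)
Your proposal is correct and follows essentially the same route as the paper: the center of $\pi_1$ reduces the question to a sign $\varepsilon=\pm1$, and $\varepsilon=-1$ is excluded because the induced pullback would give a relatively graded isomorphism between the groups of \eqref{eq:two-HM-groups} with $\SpinC$ labels $\mathfrak s_\xi-\PD(\mathfrak f)$ and $\mathfrak s_\xi+\PD(\mathfrak f)$, contradicting Proposition~\ref{prop:HMgradingasymmetry}. The only difference is cosmetic: you form $\psi=\kappa_-\circ h\circ\kappa_+^{-1}$ before applying naturality, whereas the paper applies naturality to $h$ on $Y$ and then conjugates by $\kappa_\pm^*$.
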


\begin{proof}

Recall the coorientation-preserving contactomorphisms
\[
 \kappa_\pm:(Y, \xi_\pm)\longrightarrow
 (Y_{28,-36},\xi_{\mathrm{BW}})
\]
chosen after Corollary~\ref{cor:boundarynumbers}; they carry the oriented
positive fiber classes $[t_\pm]$ to the standard positive fiber class
$[t]$.  Consider the diagram
\[
\begin{array}{ccc}
 (Y, \xi_+) & \xrightarrow{\ h\ } & (Y, \xi_-)\\
 \big\downarrow\rlap{$\scriptstyle\kappa_+$} &&
 \big\downarrow\rlap{$\scriptstyle\kappa_-$}\\
 (Y_{28,-36},\xi_{\mathrm{BW}}) &
 \xrightarrow{\ \kappa_-h\kappa_+^{-1}\ } &
 (Y_{28,-36},\xi_{\mathrm{BW}}).
\end{array}
\]
For each
$\sigma\in\{+,-\}$, we have isomorphisms
\[
 \kappa_\sigma^*:
 \HM^{-*}\!\left(Y_{28,-36},
   \mathfrak s_{\xi_{\mathrm{BW}}}\pm\PD(\mathfrak f);\F\right)
 \xrightarrow{\ \cong\ }
 \HM^{-*}\!\left(Y,
   \mathfrak s_{\xi_\sigma}\pm\PD(\mathfrak f_\sigma);\F\right).
\]
The contactomorphism $h$ induces the contravariant map
\[
 h^*: \widehat{HM}^{\bullet}(Y, \mathfrak s)
 \longrightarrow
 \widehat{HM}^{\bullet}(Y, h^*\mathfrak s).
\]
 Moreover, a coorientation-preserving
contactomorphism preserves the oriented contact plane field, and hence
\[
 h^*\mathfrak s_{\xi_-}=\mathfrak s_{\xi_+}.
\]
Choose a basepoint $y_+\in Y$ and set
\[
 y_-:=h(y_+).
\]
Then $h$ induces an isomorphism
\[
 h_*:\pi_1(Y,y_+)\longrightarrow\pi_1(Y,y_-).
\]
Let $t_\pm$ also denote the positively oriented fiber loops based at
$y_\pm$.  By Lemma~\ref{lem:center} (iv), the centers of the two fundamental
groups are
\[
 Z\bigl(\pi_1(Y,y_\pm)\bigr)=\langle t_\pm\rangle\cong\Z.
\]
Since a group isomorphism carries the center isomorphically onto the center,
$h_*(t_+)$ must be a generator of $\langle t_-\rangle$.  The only generators
of this infinite cyclic group are $t_-$ and $t_-^{-1}$; hence
\[
 h_*t_+=t_-^\varepsilon,
 \qquad
 \varepsilon\in\{+1,-1\}.
\]

Suppose $\varepsilon=-1$.  Then
$h_*\mathfrak f_+=-\mathfrak f_-$ and
\begin{equation}\label{eq:Floer-label-pullback}
 h^*\bigl(\mathfrak s_{\xi_-}-\PD(\mathfrak f_-)\bigr)
 =\mathfrak s_{\xi_+}+\PD(\mathfrak f_+).
\end{equation}

Diffeomorphism naturality
gives a relatively graded isomorphism
\[
 \HM^{-*}(Y,\mathfrak{s}_{\xi_+}+\PD(\mathfrak f_+);\F)
 \cong
 \HM^{-*}(Y,\mathfrak{s}_{\xi_-}-\PD(\mathfrak f_-);\F).
\]
Conjugating this isomorphism by the pullback maps $\kappa_+^*$ and
$\kappa_-^*$ displayed above gives a relatively graded isomorphism
\[
(\kappa_+^*)^{-1}\circ h^*\circ\kappa_-^*:
\HM^{-*}\!\left(
Y_{28,-36},
\mathfrak s_{\xi_{\mathrm{BW}}}-\PD(\mathfrak f);\F
\right)
\xrightarrow{\ \cong\ }
\HM^{-*}\!\left(
Y_{28,-36},
\mathfrak s_{\xi_{\mathrm{BW}}}+\PD(\mathfrak f);\F
\right).
\]
This contradicts Proposition~\ref{prop:HMgradingasymmetry}. Hence $\varepsilon=+1$.

\end{proof}

\section{Weinstein inequivalence}\label{sec:weinstein}

We now prove part~\textup{(v)} of the main theorem.  This is the only
non-equivalence argument that uses the monopole Floer and ECH calculation of
Section~\ref{sec:monopole}: a Weinstein homotopy determines an endpoint
contactomorphism, and Theorem~\ref{thm:fiberdetection} is needed to recover the
orientation of its circle fiber.

\begin{theorem}[Weinstein inequivalence]\label{thm:nonweinstein}
Let $(X,Y,\xi_\pm)$ and the normalized Stein data be as above, and let
\[
 \W_+=(X,\Lambda_+,\varphi_+),
 \qquad
 \W_-=(X,\Lambda_-,\varphi_-)
\]
be Weinstein structures whose cooriented boundary contact structures are
$\xi_+$ and $\xi_-$, respectively.  
Here $\Lambda_\pm$ denote arbitrary Weinstein Liouville forms, not necessarily equal to the $\lambda_\pm=d^c_{J_\pm}\rho_\pm$ associated with the normalized Stein data.
Then there is no diffeomorphism $F:X\to X$ for
which $\W_+$ is Weinstein homotopic to $F^*\W_-$.  In particular, this applies
to the Weinstein structures associated with the normalized Stein data
$(J_+,\rho_+)$ and $(J_-,\rho_-)$.
\end{theorem}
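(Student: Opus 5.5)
We argue by contradiction, following the capping argument of Theorem~\ref{thm:nonsymp} but replacing the characteristic-foliation input by Theorem~\ref{thm:fiberdetection}. Suppose $F:X\to X$ is a diffeomorphism and $(\Lambda_t,\varphi_t)_{t\in[0,1]}$ is a Weinstein homotopy on $X$ from $\W_+$ to $F^*\W_-=(X,F^*\Lambda_-,F^*\varphi_-)$. The first step is to extract a boundary contactomorphism. Along the homotopy, $\partial X$ stays a regular convex level with outward transverse Liouville field, so $\zeta_t=\ker(\Lambda_t|_Y)$ is a smooth path of cooriented contact structures on $Y$. Its endpoints are $\zeta_0=\xi_+$ and $\zeta_1=\ker((F|_Y)^*(\Lambda_-|_Y))=(F|_Y)^*\xi_-$. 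Gray stability gives an isotopy $\psi_t$ of $Y$ with $\psi_0=\id$ and $(\psi_1)_*\xi_+=\zeta_1$, preserving coorientation. Then
\[
 h:=F|_Y\circ\psi_1:(Y,\xi_+)\longrightarrow(Y,\xi_-)
\]
is a coorientation-preserving contactomorphism. It is also orientation preserving for the convex-boundary orientations, since both sides are oriented by $\alpha\wedge d\alpha$.

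Next, Theorem~\ref{thm:fiberdetection} gives $h_*[t_+]=[t_-]$, and therefore $h_*[u_+]=[u_-]$ by \eqref{eq:orientationtable}. Since $\psi_1$ is isotopic to the identity, the map $F|_Y$ is isotopic to $h$. By Lemma~\ref{lem:collar-adjustment}, $F$ is therefore isotopic, through homeomorphisms supported in a collar, to a homeomorphism $F'$ with $F'|_Y=h$.

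Now regard $h$ as a map $\partial\nu(D_+)\to\partial\nu(D_-)$. Both orientations are reversed, so $h$ is still orientation preserving. Both $u_\pm$-oriented bundles have Euler number $+36$, as explained before Lemma~\ref{lem:collar-adjustment}. Proposition~\ref{prop:capextension} isotopes $h$ to a fiber-preserving $h_0$ that extends to a cap homeomorphism $\widehat h_0:\nu(D_+)\to\nu(D_-)$ sending the oriented zero section to the oriented zero section. A second application of Lemma~\ref{lem:collar-adjustment} gives $F_0$ with $F_0|_Y=h_0$. Gluing $F_0\cup_{h_0}\widehat h_0$ produces an orientation-preserving self-homeomorphism $\overline h_0$ of $S$ with $(\overline h_0)_*[D_+]=[D_-]$. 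Since $\PD[D_\pm]=\pm2K$, this gives $\overline h_0^*(-2K)=2K$. Proposition~\ref{prop:KKsurface}(iii) gives $\overline h_0^*K=K$, so $4K=0$, which contradicts $K^2=9$.

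The final clause follows because the Weinstein structures associated with $(J_\pm,\rho_\pm)$ agree with the Stein data near $\partial X$. Their boundary contact structures are therefore $\ker\alpha_\pm=\xi_\pm$ by Proposition~\ref{prop:normalized-data}(iv).

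The main obstacle is the fiber-orientation step. A Weinstein homotopy preserves only the contact plane field, not a contact form or a characteristic foliation. Lemma~\ref{lem:symp-characteristic} is therefore unavailable, and the contactomorphism $h$ could a priori reverse the fiber. That possibility is excluded by the Floer-theoretic grading asymmetry through Theorem~\ref{thm:fiberdetection}. The remaining points are routine checks: that Gray stability applies on the closed manifold $Y$, and that the Gray isotopy composed with $F|_Y$ is isotopic to $F|_Y$, so the collar adjustment applies.
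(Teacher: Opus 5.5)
Your proposal is correct and follows essentially the same route as the paper: Gray stability on the boundary gives a coorientation-preserving contactomorphism $h=F|_Y\circ\psi_1$ isotopic to $F|_Y$, and Theorem~\ref{thm:fiberdetection} supplies $h_*[t_+]=[t_-]$. Proposition~\ref{prop:capextension}, Lemma~\ref{lem:collar-adjustment}, and canonical-class rigidity then give the contradiction $4K=0$. The differences are cosmetic. The paper first rules out orientation-reversing $F$ using the orientation of $d\Lambda_t$, whereas you get this implicitly: $h$ preserves the contact orientation, so $F|_Y$ and hence $F$ preserve orientation, which you should state explicitly. The paper also concatenates the two boundary isotopies and applies the collar lemma once rather than twice.
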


\begin{proof}
Suppose that such a diffeomorphism $F$ and a Weinstein homotopy exist.  If $F$
reverses the fixed orientation of $X$, then $F^*d\Lambda_-$ induces the
orientation opposite to that of $d\Lambda_+$, so no path of symplectic forms,
and hence no Weinstein homotopy, can join them.  We may therefore assume that
$F$ preserves orientation.  On $Y=\partial X$, the homotopy induces a path
$\xi_s$ of cooriented contact structures with
\[
 \xi_0=\xi_+,
 \qquad
 \xi_1=(F|_Y)^*\xi_-.
\]
Let $\psi_s:Y\to Y$ be the Gray isotopy satisfying
$\psi_s^*\xi_s=\xi_+$.  Then
\[
 h=F|_Y\circ\psi_1:(Y,\xi_+)\longrightarrow(Y,\xi_-)
\]
is a coorientation-preserving contactomorphism isotopic to $F|_Y$.
Theorem~\ref{thm:fiberdetection} gives
\[
 h_*[t_+]=[t_-].
\]
Since $u_\pm=t_\pm^{-1}$, the map also preserves the oriented disk-boundary
fibers $u_\pm$.  Proposition~\ref{prop:capextension} supplies an isotopy from
$h$ to a fiber-preserving homeomorphism $h_0$ and an orientation-preserving
cap map
\[
 \widehat h_0:\nu(D_+)\longrightarrow\nu(D_-)
\]
over the normal disk-bundle caps, carrying the oriented zero section $D_+$ to
$D_-$.  Concatenate the Gray isotopy from $F|_Y$ to $h$ with this fiber-preserving
isotopy and apply Lemma~\ref{lem:collar-adjustment} to the underlying
homeomorphism of $F$.  We obtain an orientation-preserving homeomorphism
$F_0:X\to X$ which agrees with $F$ off a collar and has boundary value $h_0$.
The full extension over the normal disk-bundle cap is explicitly
\[
 \overline h_0
 =F_0\cup_{h_0}\widehat h_0:
 X\cup_Y\nu(D_+)\longrightarrow X\cup_Y\nu(D_-).
\]
After identifying both closed manifolds with $S$, this is a self-homeomorphism
carrying the oriented zero section $D_+$ to $D_-$.  Exactly as in
\eqref{eq:capclassrelation}, one obtains
\[
 \overline h_0^{*}(-2K)=2K,
\]
whereas Proposition~\ref{prop:KKsurface} gives
$\overline h_0^{*}K=K$.  This would force $4K=0$, contradicting
$K^2=9$, again.
\end{proof}

\begin{remark}[Why Floer theory input is needed here]
\label{rem:why-floer-v}
In the proof of Theorem~\ref{thm:nonweinstein}, the map $F_0$ obtained after
the collar adjustment is only an orientation-preserving homeomorphism; it is
not a symplectomorphism.  Consequently Lemma~\ref{lem:symp-characteristic}
cannot be applied to $F_0$.  More fundamentally, before the adjustment the
Gray-stability map $h$ is an arbitrary coorientation-preserving
contactomorphism and need not preserve the characteristic foliation of the
selected normalized forms $\alpha_\pm$.

This is precisely the difference between parts~\textup{(iv)} and
\textup{(v)}.  In part~\textup{(iv)}, the original map $F$ is a
symplectomorphism, so Lemma~\ref{lem:symp-characteristic} yields
$F_*[t_+]=[t_-]$ before any collar modification; no Floer theory is needed.
In part~\textup{(v)}, that direct argument is unavailable, and
Theorem~\ref{thm:fiberdetection} supplies the fiber-class preservation before
one passes to $F_0$ and performs the cap extension.
\end{remark}

\begin{proof}[Proof of Theorem~\ref{thm:main}]
Use the conjugate Stein domains of Section~\ref{sec:stein} and the normalized
defining functions of Proposition~\ref{prop:normalized-data}.  Part
\textup{(i)} is Proposition~\ref{prop:normalized-data}\textup{(iii)}.  The
equality of first Chern classes in part~\textup{(ii)} is
Proposition~\ref{prop:c1equal}, and part~\textup{(iii)} is
Proposition~\ref{prop:spincdifferent}.  Theorem~\ref{thm:nonsymp} proves part~\textup{(iv)}, and
Theorem~\ref{thm:nonweinstein} proves part~\textup{(v)}.  The numerical
boundary description is Corollary~\ref{cor:boundarynumbers}.  Equality of the
symplectic volumes is
Proposition~\ref{prop:normalized-data}\textup{(v)}.

Any Morse perturbation needed to obtain a Weinstein function is supported away
from the boundary collar and therefore preserves the strict equality of the
boundary contact forms.
\end{proof}

\begin{proof}[Proof of Corollary~\ref{cor:k3}]
Regard both domains as fillings of the cooriented contact manifold
$(Y,\xi_+)$: use the identity for the positive boundary and
$\tau^{-1}:(Y,\xi_-)\to(Y,\xi_+)$ for the negative boundary.  The identity on
$X$ matches their first Chern classes, while
Theorems~\ref{thm:nonsymp} and~\ref{thm:nonweinstein} exclude all symplectic
and Weinstein equivalences.  The strict boundary equality in Theorem~\ref{thm:main} is precisely the
boundary-form condition in the Main problem.  No compatibility between the
identity on $X$ and the boundary contactomorphism $\tau$ is required.
\end{proof}

\section{Further remarks and questions}\label{sec:further}

\begin{remark}[Relation with Heegaard Floer theory]
The Lisca--Mati\'c theorem and Plamenevskaya's Heegaard Floer refinement are
often summarized as saying that nonisomorphic Stein $\SpinC$ structures give
distinct contact invariants; see \cite{KarakurtObaUkida}.  There is no
contradiction with the contactomorphism $\tau$ because
Remark~\ref{rem:marking-not-extend} shows that no boundary contactomorphism
between the two examples extends over the filling.
\end{remark}

\begin{remark}[No boundary contactomorphism extends]
\label{rem:marking-not-extend}
More strongly than the statement for the selected strict map $\tau$, no
coorientation-preserving contactomorphism from $(Y,\xi_+)$ to $(Y,\xi_-)$ is
isotopic to the boundary value of a diffeomorphism $X\to X$.  Such
contactomorphisms do exist: for example, the strict map $\tau$ constructed in
Proposition~\ref{prop:normalized-data} is one.

Suppose, to the contrary, that there are a coorientation-preserving
contactomorphism
\[
 h:(Y,\xi_+)\longrightarrow(Y,\xi_-)
\]
and a diffeomorphism $F:X\to X$ such that $F|_Y$ is isotopic to $h$.
Since $h$ preserves the contact orientation, it preserves the boundary
orientation.  With the outward-normal-first convention, $F$ must therefore
preserve the orientation of $X$.  Isotopy extension allows us to change the
boundary value of $F$ to $h$.  Theorem~\ref{thm:fiberdetection} gives
$h_*[t_+]=[t_-]$, and Proposition~\ref{prop:capextension} then extends $h$
over the normal disk-bundle cap.  Gluing this cap map to $F$ would produce a
self-homeomorphism $G:S\to S$ satisfying both $G^*K=K$ and
$G^*(-2K)=2K$, which is impossible.

Thus the identity used for the Chern-class comparison and every
contactomorphism between the two boundary contact manifolds necessarily
determine different, incompatible markings.  The boundary mapping classes
which extend over $X$ form a subgroup of the boundary mapping-class group.
Consequently, composing a nonextendable boundary class on either side with
extendable classes cannot make it extendable: otherwise the subgroup property
would imply that the original class extended.  Thus the strict boundary
identification in the Main problem is necessarily unmarked in this example.
\end{remark}

\subsection*{A marked version}

\begin{question}[Marked Main problem]
\label{ques:marked-main}
Fix a closed cooriented contact $3$--manifold $(Y,\xi)$ and a contact
form $\alpha$ with $\ker\alpha=\xi$.  Do there exist compact connected
Stein surfaces $(X_i,J_i,\rho_i)$, contactomorphisms
\[
 \iota_i:
 (\partial X_i,\ker(\lambda_i|_{\partial X_i}))
 \longrightarrow(Y,\xi),
 \qquad
 \lambda_i=d_{J_i}^c\rho_i,
\]
and a diffeomorphism $f:X_1\to X_2$ such that
\[
 f^*c_1(TX_2,J_2)=c_1(TX_1,J_1),
 \qquad
 \iota_i^*\alpha=\lambda_i|_{\partial X_i}
 \quad(i=1,2),
\]
and such that
\[
 \iota_2\circ f|_{\partial X_1}
\]
is isotopic to $\iota_1$ through coorientation-preserving
contactomorphisms, but no marking-compatible symplectomorphism exists?
Can one moreover require that no marking-compatible diffeomorphism makes
the associated Weinstein structures Weinstein homotopic?
\end{question}

Here a diffeomorphism
\[
 \Phi:X_1\longrightarrow X_2
\]
is \emph{marking-compatible} if
\[
 \iota_2\circ\Phi|_{\partial X_1}
\]
is isotopic to $\iota_1$ through coorientation-preserving
contactomorphisms.

\begin{remark}[Relation with Wang's example]
\label{rem:Wang}
There is a related four-dimensional phenomenon due to Luya Wang.  In the
compact-core formulation used here, Wang proved that the space of symplectic
forms on $S^1\times D^3$ which agree with a fixed standard symplectic form
$\omega_{\mathrm{std}}$ on a collar neighborhood of the boundary has
infinitely many connected components
\cite[Theorem~1]{WangSymplecticComponents}.  More precisely, her examples
are obtained in the form
\[
 \omega_i=f_i^*\omega_{\mathrm{std}},
 \qquad
 f_i\in\Diff_c(S^1\times D^3),
\]
where each $f_i$ is the identity on a collar neighborhood of the boundary,
and the forms are pairwise non-isotopic through symplectic forms which
continue to agree with $\omega_{\mathrm{std}}$ near the boundary.

Wang's construction also gives a corresponding statement in the Stein
category.  Start with a standard Stein datum
\[
 (J_{\mathrm{std}},\phi_{\mathrm{std}})
\]
whose associated symplectic form is
\[
 \omega_{\mathrm{std}}
 =
 dd^c_{J_{\mathrm{std}}}\phi_{\mathrm{std}},
\]
and put
\[
 J_i=f_i^*J_{\mathrm{std}},
 \qquad
 \phi_i=f_i^*\phi_{\mathrm{std}}.
\]
Then
\[
 dd^c_{J_i}\phi_i
 =
 f_i^*
 \bigl(dd^c_{J_{\mathrm{std}}}\phi_{\mathrm{std}}\bigr)
 =
 \omega_i.
\]
For any fixed pair $i,j$, the Stein data $(J_i,\phi_i)$ and
$(J_j,\phi_j)$ agree with the standard Stein datum on a common collar
neighborhood of the boundary.  In particular, their boundary contact forms,
and not merely their contact structures up to isotopy, are identical.

The two Stein data cannot be joined by a Stein homotopy fixed on a collar
neighborhood of the boundary.  Indeed, the associated exact symplectic forms
would then give a path from $\omega_i$ to $\omega_j$ through symplectic forms
which continue to agree with $\omega_{\mathrm{std}}$ near the boundary,
contradicting Wang's theorem.  Thus Wang's construction shows that even a
fixed boundary contact germ does not determine the Stein-homotopy class when
the underlying smooth marking is kept fixed.

This phenomenon should be distinguished from
Question~\ref{ques:marked-main}.  Wang's examples do \emph{not} satisfy its
non-symplectomorphism requirement.  Indeed, for any $i,j$ the diffeomorphism
\[
 \Phi_{ij}:=f_j^{-1}\circ f_i
\]
is the identity near the boundary and satisfies
\[
 \Phi_{ij}^*\omega_j=\omega_i.
\]
Hence $\Phi_{ij}$ is a marking-compatible symplectomorphism.  In fact, it
identifies the Stein data themselves:
\[
 \Phi_{ij}^*J_j=J_i,
 \qquad
 \Phi_{ij}^*\phi_j=\phi_i.
\]
Consequently the associated Weinstein structures become identical after
pullback by a marking-compatible diffeomorphism.  Thus Wang detects
nontrivial connected components of the space of structures with the smooth
marking fixed, whereas Question~\ref{ques:marked-main} asks for inequivalence
even after allowing diffeomorphisms compatible with the boundary marking.

The example constructed in the present paper does not answer
Question~\ref{ques:marked-main} for a different reason: although its
inequivalence survives arbitrary reparametrization of the filling, the chosen
strict boundary identification is not isotopic to the boundary restriction of
any diffeomorphism of the filling; see
Remark~\ref{rem:marking-not-extend}.  Thus the two constructions exhibit two
different forms of boundary nonuniqueness, while the marked problem asks for
both features simultaneously.

In this compact-core formulation, Wang's construction gives the first
nontrivial connected component of the space of symplectic forms agreeing with
a fixed standard form near the boundary in real dimension four.  Thus, in
this precise fixed-marking isotopy sense, no earlier four-dimensional example
of this phenomenon was known.
\end{remark}

\subsection*{A four-dimensional refinement}

\begin{question}
Can one obtain an analogous four-dimensional example whose Stein filling is
simply connected?
\end{question}

\subsection*{Questions in the remaining dimensions}

Proposition~\ref{prop:dimension-two}, Theorem~\ref{thm:main}, and
Proposition~\ref{prop:higher-main} give, respectively, a negative answer in
real dimension two and affirmative answers in real dimension four and in every
real dimension $4k+2\geq10$.  This leaves the following dimension-specific
questions.

\begin{question}[Real dimension six]
Does the Main problem have an affirmative solution for compact connected Stein
threefolds, including the non-Weinstein-homotopic clause?
\end{question}

\begin{question}[Real dimensions divisible by four]
For every $k\geq2$, does the Main problem have an affirmative solution in real
dimension $4k$, including the non-Weinstein-homotopic clause?
\end{question}

\section*{Acknowledgements}
The author thanks ChatGPT for extensive discussions.  This project grew out of
a dialogue between the author and ChatGPT motivated by the author's attempt to
understand the theorem of Lisca and Mati\'c.  The mathematical statements and
proofs in the final manuscript were checked and are the responsibility of the
author.
This work was supported by JSPS KAKENHI Grant Number JP26K16990 and by the Start-up Fund of the Kavli Institute for the Physics and Mathematics of the Universe (Kavli IPMU), the University of Tokyo.	

\begingroup
\sloppy
\hbadness=10000
\bibliographystyle{alpha}
\bibliography{Steinfinal}
\endgroup

\end{document}